\numberwithin{equation}{section}
\newcommand{\bA}{\mathbb{A}}
\newcommand{\bZ}{\mathbb{Z}}
\newcommand{\cA}{\mathcal{A}}
\newcommand{\cB}{\mathcal{B}}
\newcommand{\ve}{\mathcal{E}}
\newcommand{\vf}{\varphi}
\newcommand{\wh}{\widehat}
\newcommand{\rep}{\mathrm{rep}}
\newcommand{\Jac}{\mathrm{J}}
\newcommand{\Hom}{\mathrm{Hom}}
\newcommand{\modu}{\mathrm{mod}}
\newcommand{\Gr}{\mathrm{Gr}}
\newcommand{\ra}{{\rangle}}
\newcommand{\la}{{\langle}}
\newtheorem{theorem}{Theorem}[section]
\newtheorem{conjecture}[theorem]{Conjecture}
\newtheorem{corollary}[theorem]{Corollary}
\newtheorem{definition}[theorem]{Definition}
\newtheorem{example}[theorem]{Example}
\newtheorem{lemma}[theorem]{Lemma}
\newtheorem{proposition}[theorem]{proposition}
\theoremstyle{remark}
\newtheorem{remark}[theorem]{Remark}
\title[Finite Dimensional 2-cyclic Jacobian Algebras]{Finite Dimensional 2-cyclic Jacobian Algebras}
\author[Y. Li]{Yiyu Li}
\address{Department of Mathematics, Sichuan University, Chengdu 610064, P.R.China}
\email{liyiyumath@gmail.com}
\author[L. Peng]{Liangang Peng}
\address{Department of Mathematics, Sichuan University, Chengdu 610064, P.R.China}
\email{penglg@scu.edu.cn}
\subjclass[2020]{13F60,16P10,18N25}
\keywords{Finite-dimensional algebras, Jacobian Algebras, Generalized Cluster Algebras.}
\begin{document}
\begin{sloppypar}
\maketitle

\begin{abstract}
In this paper, we start with a class of quivers that containing only 2-cycles and loops, referred to as 2-cyclic quivers. We prove that there exists a potential on these quivers that ensures the resulting quiver with potential is Jacobian-finite. As an application, we first demonstrate, using covering theory, that a Jacobian-finite potential exists on a class of 2-acyclic quivers. Secondly, by using the 2-cyclic Caldero-Chapoton formula, the $\tau$-rigid modules over the Jacobian algebras of our proven Jacobian-finite 2-cyclic quiver with potential can categorify Paquette-Schiffler's generalized cluster algebras in three specific cases: one for a disk with two marked points and one 3-puncture, one for a sphere with one puncture, one 3-puncture and one orbifold point, and another for a sphere with one puncture and two 3-punctures. \end{abstract}

\section{Introduction}\label{sec: intro} 

Quivers with potentials were introduced by Derksen, Weyman, and Zelevinsky \cite{DWZ1, DWZ2} as a tool in the connection of representation theory and cluster algebras. The potentials are formal linear combinations of cycles in the quiver, which enabled the construction of the Jacobian algebra. These tools from \cite{DWZ1, DWZ2} have addressed most of the questions in cluster algebras involving skew-symmetric matrices, including sign-coherence of $c$-vectors and properties like the $F$-polynomial having a constant term of $1$.

Jacobian-finite quivers with potentials are crucial for providing a finite-dimensional algebraic structure, which makes the related research objects tractable and computationally manageable. Here, we introduce some results based on the Jacobian-finite condition or assumption. In \cite{Amiot09}, Amiot proved that the generalized cluster category associated with a Jacobi-finite quiver with potential is a Hom-finite, Krull-Schmidt, 2-Calabi-Yau category with cluster tilting objects. This finite-dimensional property ensures that the cluster tilting objects in these generalized cluster categories correspond one-to-one with the support $\tau$-tilting pairs of the corresponds finite-dimensional Jacobian algebras, as shown by Adachi, Iyama, and Reiten \cite{AIR14}. 
Many specific types of Jacobian-finite quivers with potentials have been provided so far. In \cite{LF1}, Daniel Labardini-Fragoso introduced the Labardini potentials for the triangulated surfaces and proved the Labardini potentials are Jacobian-finite for the surfaces with boundary and for the once-punctured torus \cite{LF1,LF2}. Extending this research, Sonia Trepode and Yadira Valdivieso-Díaz proved the Jacobian-finite property for Labardini potential on spheres with at least five punctures \cite{TV17}. Finally, Sefi Ladkani completed the picture by proving the Jacobian-finite property for the remaining cases \cite{Lad12}. Additionally, Jie Zhang and Wen Chang defined quivers with potentials associated with any Postnikov diagram and proved that these quivers with potentials are Jacobian-finite \cite{CZ23}.

So far, including the work listed above, most research of Jacobian-finite potentials have focused on quivers that are 2-acyclic. In this paper, after introducing the concepts of quivers with potentials and 2-cyclic quivers in Section \ref{sec: preliminary}, we provide a potential for the 2-cyclic quiver of type $\bA_n$ in Section \ref{section:jacobian_finite_2cyclic}. This quiver contains only 2-cycles and loops, with its loop-free part being the double quiver of the $\bA_n$ quiver. Under the $[1,m]$-finite dimensional condition, we prove that this 2-cyclic quiver with potential is Jacobian-finite by showing that all elements of the 2-cyclic Jacobian algebras are linear combinations of paths with a bounded length. As an application, in Section \ref{sec: Application}, we prove that there is a 2-acyclic quiver induced from the $\bZ_3$-cover of this 2-cyclic quiver with potential, which is also Jacobian-finite. For the 2-cyclic quiver $\overline{\bA_2}^{\emptyset}$, $\overline{\bA_2}^{[1,2]}$ and $\overline{\bA_3}^\emptyset$ since the cluster category of these 2-cyclic quivers with potentials are Hom-finite, and Paquette-Schiffler's generalized cluster algebras can be defined on these 2-cyclic quivers, we show that, through the 2-cyclic Caldero Chapoton map defined on the module categories of the 2-cyclic Jacobian algebras in Section \ref{sec: Categorification_of_PS_Generalized_Cluster_Algebras}, the indecomposable $\tau$-rigid modules of our resulting finite-dimensional 2-cyclic Jacobian algebras correspond to the cluster variables of Paquette-Schiffler's generalized cluster algebras.

In this paper, \(K\) is an algebraic closed field. For $a,b\in \mathbb Z$, we denote \([a,b] := \{k \in \mathbb{Z} \mid a \leq k \leq b\}\). Additionally, when \(a > b\), we interpret them as empty sets. we always discuss algebras that are unital and associative and categories are essentially small and $K$-linear.




\section{Preliminaries}\label{sec: preliminary}
\subsection{Quivers and Path algebras}
A \textbf{quiver} $Q$ is an oriented graph consisting of a set of vertices $Q_0$ and a set of arrows $Q_1$. Each arrow $a \in Q_1$ has a source vertex $s(a)$ and a target vertex $t(a)$. A \textbf{path} in a quiver is a sequence of arrows $a_1 a_2 \cdots a_n$ such that $t(a_i) = s(a_{i+1})$ for $i = 1, \ldots, n-1$. The \textbf{length} of a path is the number of arrows in the sequence. A \textbf{cycle} is a path that starts and ends at the same vertex. We call a quiver 2-acyclic if the length of every cycle in $Q$ are at least 3. A \textbf{double quiver} $\overline{Q}$ of a quiver $Q$ is $\overline{Q}=(Q_0,Q_1\cup Q_1^\star,s,t)$, where $Q^\star_1=\{a^\star|a\in Q_1\}$ and $s(a)=t(a^\star),t(a)=s(a^\star)$. 

The \textbf{path algebra} $KQ$ of a quiver $Q$ over a field $K$ is the vector space over $K$ with a basis consisting of all paths in $Q$, including a trivial path $e_i$ at each vertex $i \in Q_0$ with $s(e_i)=t(e_i)=i$. Multiplication in $K Q$ is defined by composition of paths: if $p = a_1 a_2 \cdots a_m$ and $q = b_1 b_2 \cdots b_n$ are paths with $t(a_m) = s(b_1)$, then $pq = a_1 a_2 \cdots a_m b_1 b_2 \cdots b_n$; otherwise, the product is $0$. The identity element is the sum of the trivial paths: $\sum_{i \in Q_0} e_i$, the product of the trivial path is defined to be $e_ie_j=\delta_{ij}$. We denote the ideal generated by all the arrows of $KQ$ by $\mathfrak{m}$.

\subsection{Quiver representations and quiver Grassmannians}A \textbf{representation} (or module) $M$ of a quiver $Q$ consists of a family of vector spaces $\{ M_i \}_{i \in Q_0}$ and a family of linear maps $\{ M_a : M_{s(a)} \to M_{t(a)} \}_{a \in Q_1}$, we denote the category of finite dimensional representations of $Q$ by $\rep(Q)$. For each $M\in \rep(Q)$, a representation $N$ is a \textbf{subrepresentation} of $M$ if $N_i$ is a subspace of $M_i$ for each $i \in Q_0$ and $M_a(N_{s(a)}) \subseteq N_{t(a)}$ for all $a \in Q_1$. The \textbf{dimension vector} of a representation $M$ is the vector $\dim M = (d_i)_{i \in Q_0}$, where $d_i = \dim_K M_i$. For a given dimension vector $\mathbf{v} = (v_i)_{i \in Q_0}$, the \textbf{quiver Grassmannian} $\mathrm{Gr}_{\mathbf{v}}(M)$ is the set of subrepresentations $N \subseteq M$ such that $\dim N_i = v_i$ for each $i \in Q_0$. Since quiver Grassmannians are algebraic varieties, their structure allows the use of geometric methods to study their properties.

\subsection{$g$-vectors}Assume that $|Q_0|=n$, for any finite generated module $M\in \rep(Q)$, we define the \textbf{$g$-vector} $g_M$ of $M$ to be the vector $(a_1-b_1,...,a_n-b_n)^T\in \bZ^n$, where $a_i,b_i$ are obtained from the following right exact sequence which is the minimal projective presentation of $M$:
\[
\begin{tikzcd}
\oplus_{i=1}^nP^{b_i}_i \arrow[r] & \oplus_{i=1}^nP^{a_i}_i \arrow[r] & M \arrow[r] & 0,
\end{tikzcd}
\]
where $P_i$ is the indecomposable projective module.

\subsection{Quivers with Potentials}

The \textbf{completed path algebra} $\widehat{KQ}$ is defined to be the direct limit $\varinjlim_n KQ/\mathfrak{m}^n$, which naturally possesses an algebraic structure and an $\mathfrak{m}$-adic topological structure. An element $W \in \widehat{KQ}/\{\widehat{KQ}, \widehat{KQ} \}$ is called a \textbf{potential} on $Q$, where $\{\widehat{KQ}, \widehat{KQ} \}$ represents the closed subspace of $\widehat{KQ}$ generated by all commutators. The pair $(Q, W)$ is called a \textbf{quiver with potential}. Potentials are linear combinations (potentially infinite sums) of cycles in $Q$. Two cycles in $Q$ are identified in $\widehat{KQ}/\{\widehat{KQ}, \widehat{KQ} \}$ if they are the same up to cyclic permutation. For an arrow $a$ in $Q$, a cyclic derivation $\partial_a$ with respect to $a$ is a map:
$$\widehat{KQ}/\{\widehat{KQ}, \widehat{KQ}\} \rightarrow \widehat{KQ},$$
defined by $\partial_a(p) = vu$ for a path $p = uav \in \widehat{KQ}/\{\widehat{KQ}, \widehat{KQ}\}$. For any fixed $(Q, W)$, the Jacobian ideal is defined as the closure $\overline{\langle \partial_a(W) \rangle}_{\forall a \in Q_1}$, and the quotient algebra $~\widehat{KQ} / \overline{\langle \partial_a(W) \rangle}_{\forall a \in Q_1}$ is called the \textbf{(completed) Jacobian algebra}, denoted by $\wh{\Jac}(Q,W)$. A potential $W$ is said to be \textbf{Jacobian-finite} if its corresponding Jacobian algebra is finite dimensional.

There are some related concepts of quivers with potentials:
\begin{definition}\label{def:QP_Related_Concepts}
\item[(1)] Let $(Q',W')$ and $(Q'',W'')$ be two quivers with potentials satisfied $Q'_0=Q''_0$, their direct sum is defined to be the quiver with potential $(Q',W') \oplus (Q'',W'') := (Q,W)$, where $Q = (Q'_0, Q'_1 \cup Q''_1), W = W' + W''$.
\item[(2)] Let $(Q,W)$ and $(Q',W')$ be two quivers with potentials with $Q_0 = Q'_0$. If there exists an algebra isomorphism $\varphi: \widehat{KQ} \rightarrow \widehat{KQ'}$ that induces cyclic equivalence $\varphi(W)=W'$, we say that $(Q,W)$ and $(Q',W')$ are right equivalent, denoted as $(Q,W) \simeq^r (Q',W')$.
\item[(3)] We say that a quiver with potential $(Q,W)$ is trivial if $W$ is the sum of the cycles of length $2$ and $\wh{\Jac}(Q,W)\simeq \bigoplus_{i\in Q_0} Ke_i$. We say that a quiver with potential $(Q,W)$ is reduced if $W$ is the sum of cycles of length at least $3$.
\end{definition}

In \cite{DWZ1}, it has been proven that any quiver with potential is right equivalent to a direct sum of a trivial quiver with potential and a reduced quiver with potential. This direct sum decomposition is unique up to right equivalence. In subsequent discussions, we refer to $(Q_{triv}, W_{triv})$ as the trivial summand and $(Q_{red}, W_{red})$ as the reduced summand of $(Q,W)$.

\subsection{DWZ's mutation of a quiver with potential}
In \cite{DWZ1}, another significant concept is introduced, called the DWZ-mutation of quivers with potentials, which serves as an algebraic analog of mutation in cluster algebras. A vertex $k$ in a quiver with potential $(Q,W)$ is called mutable if $Q$ has no 2-cycles or loops at $k$. In this section, we always assume $k$ is a mutable vertex in $(Q,W)$.

\begin{definition}\label{def:DWZMu}
Let $(Q,W)$ be a quiver with potential, $k\in Q_0$, denote $N(k):=\{a|s(a)=k\text{~or~}t(a)=k\}$, there is a quiver with potential $\widetilde{\mu_k}(Q, W):=(Q', W')$ obtained as follows:
\begin{itemize}
    \item $Q'$ has the same vertex set as $Q$, the arrow set of $Q'$ is given by reversing all the arrows connecting to $k$ and by adding arrows $[ab]$ when $ab$ is a path such that $t(a)=s(b)=k$, we present the arrow set $Q_1'$ as follows:  
    \begin{equation}\label{eq:QP_mutation_arrow_set}
        Q'_1:=Q_1\setminus N(k)\cup \{a^\star|s(a^\star)=t(a),t(a^\star)=s(a),a\in N(k)\}\cup\{[ab]| t(a)=s(b)=k\}.
    \end{equation}
    \item $W' = [W] + \sum_{a \in Q^1: s(a) = k} a^\star b^\star [ba]$, where $[W]$ is obtained by replacing all length-2 paths $ab$ through $k$ in $W$ with $[ab]$.
\end{itemize}
The process of obtaining $(Q', W')$ from $(Q, W)$ is called DWZ-mutation. We denote the reduced part of $\widetilde{\mu_k}(Q, W)$ as $\mu_k(Q, W)$.
\end{definition}

In \cite{DWZ1}, the following results concerning DWZ-mutation are proved:
\begin{theorem}\label{theorem:Main_theorem_DWZ}
For any quiver with potential $(Q, W)$, $k\in Q_0$, we have:
\begin{itemize}
    \item The operation $\mu_k$ acting on the reduced part $(Q_{red}, W_{red})$ is an involution, i.e., $\mu_k^2(Q_{red}, W_{red}) = (Q_{red}, W_{red})$.
    \item $(Q, W)$ is Jacobian-finite if and only if $\widetilde{\mu_k}(Q, W)$ is also Jacobian-finite.
\end{itemize}
\end{theorem}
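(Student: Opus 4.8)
The two assertions in Theorem~\ref{theorem:Main_theorem_DWZ} are intertwined, and my plan is to reduce the second to the first. Since $\widetilde{\mu_k}(Q,W)$ splits as $\mu_k(Q,W)$ direct sum a trivial quiver with potential, and a trivial quiver with potential has Jacobian algebra $\bigoplus_{i}Ke_i$, the algebra $\wh{\Jac}(\widetilde{\mu_k}(Q,W))$ is finite dimensional if and only if $\wh{\Jac}(\mu_k(Q,W))$ is; so it suffices to show that $\mu_k$ preserves Jacobian-finiteness on reduced quivers with potentials, and by the involutivity in the first bullet this in turn reduces to the single implication ``$(Q,W)$ Jacobian-finite $\Rightarrow\mu_k(Q,W)$ Jacobian-finite''. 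Throughout, the technical backbone is a \emph{splitting theorem}: every quiver with potential decomposes, uniquely up to right equivalence, as a trivial part plus a reduced part, and right-equivalent quivers with potentials have right-equivalent images under $\widetilde{\mu_k}$. I would establish this first, by showing that an idempotent-fixing automorphism of $\wh{KQ}$ which is the identity modulo $\mathfrak{m}^2$ can be used to ``straighten'' the degree-$2$ part of $W$, and that cyclic derivatives, hence the Jacobian ideal, transform functorially under idempotent-fixing isomorphisms.

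For the first bullet, the plan is an explicit computation of $\widetilde{\mu_k}^2(Q,W)$. Writing the arrows of $\widetilde{\mu_k}(Q,W)$ as the untouched arrows $Q_1\setminus N(k)$, the reversed arrows $a^\star$ for $a\in N(k)$, and the composites $[ab]$ for paths $a\to k\to b$, one applies the definition a second time: the $a^\star$ get reversed again (identifying $a^{\star\star}$ with $a$), new composites through $k$ appear, and the potential picks up a second copy of the $\sum a^\star b^\star[ba]$-type terms. The key point is that in $\widetilde{\mu_k}^2(Q,W)$ the freshly created composite arrows occur in $2$-cycle terms $[ab]\,\overline{[ab]}$ of the potential (together with higher-order corrections), so the splitting theorem lets one absorb exactly these into the trivial summand; after removing them one is left with a quiver with potential right equivalent to $(Q,W)$ enlarged by a trivial summand. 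Passing to reduced parts yields $\mu_k^2(Q_{red},W_{red})\simeq^r(Q_{red},W_{red})$. The fiddly part here is the bookkeeping: tracking precisely which composite arrows survive the two reductions and checking that the change of variables needed to kill the $2$-cycles does not reintroduce them elsewhere.

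For the surviving implication in the second bullet, the plan is to analyze $\wh{\Jac}(\widetilde{\mu_k}(Q,W))$ via the relations coming from $W'=[W]+\sum_{s(a)=k}a^\star b^\star[ba]$. The cyclic derivative $\partial_{[ba]}W'$ expresses each composite arrow $[ba]$ as $-\partial_{ba}W$, i.e.\ as a combination of ``old'' paths, while $\partial_{a^\star}W'$ and $\partial_{b^\star}W'$ show that any product $b^\star[\,\cdot\,]$ or $[\,\cdot\,]a^\star$ passing through $k$ can likewise be rewritten in terms of old arrows. Using these substitutions one sets up a rewriting procedure sending (a spanning set of) long paths of $\widetilde{\mu_k}(Q,W)$ to expressions in the arrows of $Q$, so that the hypothesis $\mathfrak{m}^N\subseteq\overline{\langle\partial_a W\rangle}$ for some $N$ forces $\mathfrak{m}^{N'}\subseteq\overline{\langle\partial_a W'\rangle}$ for a suitable $N'$, giving finite dimensionality of $\wh{\Jac}(\widetilde{\mu_k}(Q,W))$.

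The main obstacle is exactly this last step: showing the rewriting \emph{terminates} with a uniform length bound. A path in $\widetilde{\mu_k}(Q,W)$ may end in a lone arrow $a^\star$ into $k$, which by itself corresponds to nothing in $KQ$, and naive substitution of the $\partial_{[ba]}W'$-relations can increase path length before it decreases it; controlling this needs a carefully chosen filtration (say, by the number of arrows incident to $k$, with ties broken by total length). This is where I expect to need the more representation-theoretic viewpoint of \cite{DWZ1}, phrasing finiteness through decorated representations and their mutations; the $2$-cycle bookkeeping in the first bullet is the second most delicate point.
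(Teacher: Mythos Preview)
The paper does not prove Theorem~\ref{theorem:Main_theorem_DWZ} at all: it is stated as a quotation of results from \cite{DWZ1}, introduced by the sentence ``In \cite{DWZ1}, the following results concerning DWZ-mutation are proved,'' and no argument is given. So your proposal is not to be compared against a proof in the paper but against the original Derksen--Weyman--Zelevinsky arguments.

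As an outline of the \cite{DWZ1} proofs your sketch is broadly faithful for the first bullet: the splitting theorem plus an explicit computation of $\widetilde{\mu_k}^2$ and identification of the surviving $2$-cycles as the trivial summand is exactly their strategy. For the second bullet, however, your plan diverges from \cite{DWZ1} and the obstacle you flag is real. Derksen--Weyman--Zelevinsky do \emph{not} argue via a rewriting procedure on paths with a termination/length bound; instead they prove (their Propositions~4.5 and~6.1--6.4) that right equivalence preserves Jacobian algebras up to isomorphism, that $\wh{\Jac}(Q,W)\cong\wh{\Jac}(Q_{red},W_{red})$, and then relate $\wh{\Jac}(\mu_k(Q,W))$ and $\wh{\Jac}(Q,W)$ directly at the level of algebras, so that finite-dimensionality transfers without any filtration-and-rewriting argument. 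Your proposed route via ``$\mathfrak{m}^N\subseteq\overline{\langle\partial_aW\rangle}$ forces $\mathfrak{m}^{N'}\subseteq\overline{\langle\partial_aW'\rangle}$'' is the point you would most likely get stuck on, precisely for the reason you name: lone reversed arrows $a^\star$ and the non-length-decreasing substitutions. If you want a self-contained argument, it is cleaner to follow \cite{DWZ1} and work with the algebras rather than with individual paths.
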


\begin{definition}\label{def:NonDegDWZ}
Let $i_1, \ldots, i_s$ be a sequence of vertices in $Q$. For any $t \in [1, s]$, if $\mu_{i_t} \cdots \mu_{i_1}(Q, W)$ contains no 2-cycles or loops, we call $(Q, W)$ $(i_1, \ldots, i_s)$-non-degenerated. If $(Q, W)$ is $(i_1, \ldots, i_s)$-non-degenerated for any sequence "$i_1, \ldots, i_s$" of vertices in $Q$, then $(Q, W)$ is called non-degenerated.
\end{definition}

Concerning the existence of non-degenerated potentials for 2-acyclic quivers, \cite{DWZ1} proved the positive results for the existence. However, it should be noted that the proof of this theorem is not constructive. Therefore, for an arbitrary quiver, there is no explicit algorithm to find a non-degenerate potential:

\begin{theorem}\label{theorem:Non-DegPot}\cite{DWZ1}
If $K$ is uncountable, then any quiver $Q$ without 2-cycles or loops has a non-degenerated potential.
\end{theorem}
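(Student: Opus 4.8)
The plan is to reproduce, in outline, the Baire--category argument of Derksen--Weyman--Zelevinsky \cite{DWZ1}. Fix $Q$ with $Q_0=\{1,\dots ,n\}$ having no loops and no $2$-cycles. For $d\geq 2$ let $\mathcal{P}^{\leq d}(Q)$ be the finite-dimensional $K$-space of potentials that are linear combinations of cyclic classes of cycles of length $\leq d$, so $\mathcal{P}^{\leq d}(Q)\cong \mathbb{A}_K^{N(d)}$, and let $\widehat{\mathcal{P}}(Q)=\varprojlim_d \mathcal{P}^{\leq d}(Q)$ be the full space of potentials, with coordinate projections $\pi_d$. Unwinding Definitions \ref{def:QP_Related_Concepts} and \ref{def:DWZMu}, the operation $\widetilde{\mu_k}$ followed by passage to the reduced part is, on suitable charts, given by morphisms of affine varieties on the coefficients of the potential, and it changes the lengths of the cycles occurring in the potential only by a bounded multiplicative and additive amount. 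Hence, for a fixed finite vertex sequence $\mathbf{i}=(i_1,\dots ,i_s)$, whether $(Q,W)$ is $\mathbf{i}$-non-degenerated depends only on $\pi_d(W)$ for an explicit bound $d=d(n,s)$; moreover, shrinking to a dense open locus of such $W$, each reduced mutated quiver $Q^{(t)}$ is a fixed quiver.

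Step 2 (openness and density). At each stage the requirement "$\mu_{i_{t+1}}\cdots\mu_{i_1}(Q,W)$ has no $2$-cycle and no loop" is the non-vanishing of certain entries of the (skew-)adjacency data of the reduced mutated quiver, which by Step 1 are regular functions of $\pi_d(W)$; the reduction steps themselves, being governed by the rank of the quadratic part of the potential, are likewise controlled by non-vanishing of minors. Therefore the locus
\[
U_{\mathbf{i}} := \{\, W\in\widehat{\mathcal{P}}(Q)\ \mid\ (Q,W)\ \text{is}\ \mathbf{i}\text{-non-degenerated}\,\}
\]
is the preimage under $\pi_d$ of a Zariski-open subset $\overline{U}_{\mathbf{i}}\subseteq \mathcal{P}^{\leq d}(Q)$. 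In particular $U_{\mathbf{i}}$ is open, and once we know $\overline{U}_{\mathbf{i}}\neq\emptyset$ it is automatically dense (a nonempty Zariski-open in the irreducible affine space $\mathcal{P}^{\leq d}(Q)$), so $U_{\mathbf{i}}$ is dense in $\widehat{\mathcal{P}}(Q)$.

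Step 3 (non-emptiness of $\overline{U}_{\mathbf{i}}$). This is the crux, and is proved by induction on $s=|\mathbf{i}|$; the case $s=0$ is trivial since $Q$ itself is $2$-acyclic. For the inductive step one assumes that for generic $W$ the prefix $(i_1,\dots ,i_{s-1})$ is applicable and produces a fixed $(Q^{(s-1)},W^{(s-1)})$, and one must show that generically $\mu_{i_s}$ can then also be applied without creating a loop or $2$-cycle. The point is that $W^{(s-1)}$ inherits a genericity property from $W$: concretely one keeps track of which $3$-cycles through $i_s$ occur in $W^{(s-1)}$ with nonzero coefficient, because it is exactly those $3$-cycles that turn into quadratic terms after $\widetilde{\mu_{i_s}}$ and so force the new composite arrows $[ab]$ to be cancelled during reduction rather than surviving to form $2$-cycles with the surrounding arrows. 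Propagating this invariant through the $s-1$ earlier mutations — and through the intervening reductions, which are changes of variables substituting arrows by arrows plus higher-order terms — is the delicate bookkeeping that yields $\overline{U}_{\mathbf{i}}\neq\emptyset$.

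Step 4 (countable intersection). By definition $W$ is a non-degenerated potential if and only if $W\in\bigcap_{\mathbf{i}} U_{\mathbf{i}}$, the intersection running over the countably many finite vertex sequences, each $U_{\mathbf{i}}$ being open and dense in $\widehat{\mathcal{P}}(Q)$ by Steps 2--3. It remains to see that $\widehat{\mathcal{P}}(Q)$ is a Baire space for this topology, and this is where the uncountability of $K$ is used: if $\widehat{\mathcal{P}}(Q)$ were a countable union of nowhere-dense closed sets, each would lie in the zero locus of a nonzero polynomial $f_j$ in finitely many coordinates, and by ordering the coordinate functions $y_1,y_2,\dots$ and assigning values $c_1,c_2,\dots\in K$ one at a time — at each stage only finitely many values being forbidden for each of the countably many $f_j$, in order to keep each $f_j$ "alive" and eventually nonzero — an uncountable $K$ lets us always choose a legal $c_k$, producing a point in none of the closed sets, a contradiction. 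Hence $\bigcap_{\mathbf{i}} U_{\mathbf{i}}$ is dense, in particular nonempty, and any element of it is a non-degenerated potential. I expect Step 3 to be the main obstacle: Steps 2 and 4 are formal once mutation and reduction are recognized as algebraic operations, whereas exhibiting, for every mutation sequence, a single potential along which no $2$-cycle is ever created requires the genericity-propagation argument of \cite{DWZ1}, and it is precisely the need to intersect the resulting countably many generic conditions that makes the uncountability hypothesis indispensable.
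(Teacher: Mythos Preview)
The paper does not prove this theorem at all: it is stated as a citation from \cite{DWZ1} and used as background, with the explicit remark that the proof there is non-constructive. So there is no ``paper's own proof'' to compare against; your outline is an attempt to reconstruct the original DWZ argument, and in broad strokes it does follow their strategy (for each finite mutation sequence the non-degeneracy locus is the complement of a proper Zariski-closed set in a finite-dimensional truncation, and uncountability of $K$ guarantees the countable intersection is nonempty).

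One caution about your Step~1: mutation followed by reduction is \emph{not} literally a morphism of affine varieties on coefficients, because the Splitting Theorem of \cite{DWZ1} only determines the reduced part up to right equivalence, and the automorphisms realizing the reduction are not canonical. DWZ do not claim this; what they actually prove (their Proposition~7.3 and Corollary~7.4) is the weaker and sufficient statement that for each fixed sequence the degeneracy condition is contained in the zero set of finitely many polynomials in finitely many coefficients of $W$. Your Step~3 correctly identifies this as the substantive point, and your Step~4 is the standard argument. So the outline is essentially faithful to the source you are citing, modulo the overstatement in Step~1, but since the present paper contains no proof there is nothing further to compare.
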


\subsection{2-cyclic quivers}
We now provide the definition of a 2-cyclic quiver.

\begin{definition}\label{def:2CQ}
    A finite quiver $Q=(Q_0, Q_1, s, t)$ is called 2-cyclic if it satisfies:
\begin{enumerate}
    \item For any arrow $a_i: i \rightarrow j$ where $s(a_i)\neq t(a_i)$, there exists exactly one arrow $b_i: j \rightarrow i$ in the opposite direction.
    \item There is at most one loop at any vertex $i$ in $Q$. If there is a loop at vertex $i$, it is denoted as $\ve_i$.
\end{enumerate}    
\end{definition}

One can easy verify that the loop-free part of a 2-cyclic quiver is a double quiver of a simple-laced quiver. Therefore, in the following discussions, for the sake of clarity in notation, we always denote a 2-cyclic quiver $Q$ as \(\overline{D}^I\) if the loop-free part of $Q$ is the double quiver of a quiver \(D\) and $I=\{i\in Q_0|\ve_i\in Q_1\}$ .

\section{Jacobian-finite 2-cyclic Quivers With Potentials}\label{sec: Main} 
\subsection{2-cyclic Quivers With Potentials}\label{section:jacobian_finite_2cyclic}
Let $(\overline{\bA}_n^{[1,m]},W_n^{[1,m]})$ be a 2-cyclic quiver with potential of the form:
\[
\begin{tikzcd}
\overline{\bA}_n^{[1,m]}:&1 \arrow[r, "a_1", shift left] \arrow["\ve_1"', loop, distance=2em, in=125, out=55] & 2 \arrow[l, "b_1", shift left] \arrow[r, "a_2", shift left] \arrow["\ve_2"', loop, distance=2em, in=125, out=55] & \cdots \arrow[r, "a_{m-1}", shift left] \arrow[l, "b_2", shift left] & m \arrow[l, "b_{m-1}", shift left] \arrow[r, "a_{m}", shift left] \arrow["\ve_m"', loop, distance=2em, in=125, out=55] & m+1 \arrow[r, "a_{m+1}", shift left] \arrow[l, "b_m", shift left] & \cdots \arrow[l, "b_{m+1}", shift left] \arrow[r, "a_{n-1}", shift left]  & n \arrow[l, "b_{n-1}", shift left]
\end{tikzcd}
\]
\begin{equation}
\begin{aligned}
W_n^{[1,m]} &:= \sum_{i=1}^{m} k_i \ve_i^3 + \sum_{i=1}^{n-1} t_i (a_i b_i)^3 + \sum_{i=1}^{m} 3 \ve_i a_i b_i + \sum_{i=2}^{m} 3 \ve_i b_{i-1} a_{i-1} + \sum_{i=m}^{n-2} 3 a_i a_{i+1} b_{i+1} b_i
,\end{aligned}
\label{eq:potential_form}
\end{equation}
with $k_i,t_i\in K$. If $W_n^{[1,m]}$ satisfied $n\equiv m\pmod 2$ and the following condition holds for any $1 \leq i' \leq m$ and $1 \leq s' \leq \frac{n-m-2}{2}$:
\begin{equation}
\begin{aligned}
\left( \sum_{i=i'}^{m} (-1)^i k_i + \sum_{s=1}^{s'} (-1)^s t_{m+2s-1} \right) \neq 0
\end{aligned}
\label{eq:finite_dimensional_condition}
\end{equation}
then we say $W_n^{[1,m]}$ satisfies the $[1,m]$-finite dimensional condition. In this section, we will prove that any quiver with potential $(\overline{\bA}_n^{[1,m]},W_n^{[1,m]})$ that satisfies the $[1,m]$-finite dimensional condition is Jacobian finite.

Consider the Jacobian ideal corresponding to $W_n^{[1,m]}$, $\langle \partial_\alpha W_n^{[1,m]} \rangle_{\alpha \in Q_1}$, whose generators can be classified into the following three types:
\begin{equation}
\begin{aligned}
\frac{1}{3}\partial_{\ve_i} W_n^{[1,m]} &= \begin{cases} 
k_1 \ve_1^2 + a_1 b_1, & i = 1; \\
k_i \ve_i^2 + a_i b_i + b_{i-1} a_{i-1}, & 2 \leq i \leq m;
\end{cases}
\end{aligned}
\label{eq:jacobian_ideal_ve}
\end{equation}

\begin{equation}
\begin{aligned}
\frac{1}{3}\partial_{a_i} W_n^{[1,m]} &= \begin{cases} 
t_i (b_i a_i)^2 b_i + b_i \ve_i + \ve_{i+1} b_i, & 1 \leq i \leq m-1; \\
t_m (b_m a_m)^2 b_m + b_m \ve_m + a_{m+1} b_{m+1} b_m, & i = m; \\
t_i (b_i a_i)^2 b_i + a_{i+1} b_{i+1} b_i + b_i b_{i-1} a_{i-1}, & m+1 \leq i \leq n-2; \\
t_{n-1} (b_{n-1} a_{n-1})^2 b_{n-1} + b_{n-1} b_{n-2} a_{n-2}, & i = n-1;
\end{cases}
\end{aligned}
\label{eq:jacobian_ideal_a}
\end{equation}

\begin{equation}
\begin{aligned}
\frac{1}{3}\partial_{b_i} W_n^{[1,m]} &= \begin{cases} 
t_i a_i (b_i a_i)^2 + \ve_i a_i + a_i \ve_{i+1}, & 1 \leq i \leq m-1; \\
t_m a_m (b_m a_m)^2 + \ve_m a_m + a_m a_{m+1} b_{m+1}, & i = m; \\
t_i a_i (b_i a_i)^2 + a_i a_{i+1} b_{i+1} + b_{i-1} a_{i-1} a_i, & m+1 \leq i \leq n-2; \\
t_{n-1} a_{n-1} (b_{n-1} a_{n-1})^2 + b_{n-2} a_{n-2} a_{n-1}, & i = n-1;
\end{cases}
\end{aligned}
\label{eq:jacobian_ideal_b}
\end{equation}
It is known that in the Jacobian algebra $\widehat{\Jac}(\overline{\bA}_n^{[1,m]},W_n^{[1,m]})$, $\partial_\alpha W_n^{[1,m]} = 0$ for $\alpha \in Q_1$. Therefore, equations \eqref{eq:jacobian_ideal_ve}, \eqref{eq:jacobian_ideal_a}, and \eqref{eq:jacobian_ideal_b} provide three types of relations in $\widehat{\Jac}(\overline{\bA}_n^{[1,m]},W_n^{[1,m]})$. Based on these relations, we will prove some equations in $\widehat{\Jac}(\overline{\bA}_n^{[1,m]},W_n^{[1,m]})$ in the following lemmas, and use these equations to prove that $\widehat{\Jac}(\overline{\bA}_n^{[1,m]},W_n^{[1,m]})$ is a finite-dimensional algebra.
\subsection{Additional Relations in $\widehat{\Jac}(\overline{\bA}_n^{[1,m]},W_n^{[1,m]})$}
\begin{lemma}\label{lemma:jacobian_finite_relations}
In the Jacobian algebra $\widehat{\Jac}(\overline{\bA}_n^{[1,m]},W_n^{[1,m]})$, we have:
\begin{enumerate}
    \item[(I)] For $i \leq m$, $a_1 \cdots a_{i-1} \ve_i = \left( (-1)^{i-1} + f_i(\ve_1) \right) \ve_1 a_1 \cdots a_{i-1}$, where $f_i(x) \in K[x]$ and $x^3 \mid f_i(x)$.
    \item[(II)] For $i \leq m$, $a_1 \cdots a_{i-1}(a_i b_i) a_i = = \left(\left(\sum_{j=1}^i (-1)^{i-j+1} k_j \right) \ve_1^2 + g_i(\ve_1)\right) a_1 \cdots a_i$, where $g_i(x) \in K[x]$ and $x^4 \mid g_i(x)$.
    \item[(III)] For $i > m$, $a_1 \cdots a_{i-1} (a_i b_i)^2 a_i = \left(r_i \ve_1^2 + g_i(\ve_1)\right) a_1 \cdots a_i$, where $r_i = 0$ and $x^4 \mid g_i(x)$ if $i-m$ is even, and $r_i = 1$ and $x^3 \mid g_i(x)$ if $i-m$ is odd.
    \item[(IV)] For $i > m$, $a_1 \cdots a_{i-1}(a_i b_i) a_i = \left((-1)^{\frac{i+m-3}{2}} r_i \ve_1 + h_i(\ve_1)\right) a_1 \cdots a_i$, where $h_i(x) \in K[x]$ and $x^2 \mid h_i(x)$, with $r_i = 0$ if $i-m$ is even, and $r_i = 1$ if $i-m$ is odd.
\end{enumerate}
\end{lemma}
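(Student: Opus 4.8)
The plan is to prove all four identities by induction on $i$, in two stages: (I) and (II) together for $1\le i\le m$, then (III) and (IV) together for $m<i\le n-1$. Write $F_i(x):=(-1)^{i-1}+f_i(x)$ and $Q_i(x):=\bigl(\sum_{j=1}^i(-1)^{i-j+1}k_j\bigr)x^2+g_i(x)$, so that (I) reads $a_1\cdots a_{i-1}\ve_i=F_i(\ve_1)\ve_1\,a_1\cdots a_{i-1}$ and (II) reads $a_1\cdots a_{i-1}(a_ib_i)a_i=Q_i(\ve_1)\,a_1\cdots a_i$. Two elementary facts will be used throughout: since $x^3\mid f_i$ we get $F_i(x)^2\equiv 1\pmod{x^3}$, hence $x^2F_i(x)^2\equiv x^2\pmod{x^5}$; and since $x^2\mid Q_i$ we get $x^4\mid Q_i(x)^2$. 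There is also one auxiliary identity, valid for $1\le j\le m$: grouping $a_1\cdots a_j(b_ja_j)^2$ appropriately and applying (II) at index $j$ twice yields $a_1\cdots a_j(b_ja_j)^2=Q_j(\ve_1)^2\,a_1\cdots a_j$, whose coefficient is divisible by $\ve_1^4$.

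For the first stage, the case $i=1$ is immediate: (I) is the tautology $\ve_1=F_1(\ve_1)\ve_1$ with $f_1=0$, and (II) follows from $a_1b_1=-k_1\ve_1^2$, which is the relation $\partial_{\ve_1}W_n^{[1,m]}=0$ of \eqref{eq:jacobian_ideal_ve}. For $2\le i\le m$, assuming (I), (II) at $i-1$: to obtain (I) at $i$, substitute $a_{i-1}\ve_i=-\ve_{i-1}a_{i-1}-t_{i-1}a_{i-1}(b_{i-1}a_{i-1})^2$ from $\partial_{b_{i-1}}W_n^{[1,m]}=0$ in \eqref{eq:jacobian_ideal_b}, left-multiply by $a_1\cdots a_{i-2}$, then rewrite the first summand with (I) at $i-1$ and the second with the auxiliary identity at $j=i-1$; one gets $a_1\cdots a_{i-1}\ve_i=\bigl(-F_{i-1}(\ve_1)\ve_1-t_{i-1}Q_{i-1}(\ve_1)^2\bigr)a_1\cdots a_{i-1}$, whose constant term is $(-1)^{i-1}$ and whose remainder is divisible by $\ve_1^3$ because $x^3\mid f_{i-1}$ and $x^4\mid Q_{i-1}(x)^2$. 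To obtain (II) at $i$, substitute $a_ib_i=-k_i\ve_i^2-b_{i-1}a_{i-1}$ from $\partial_{\ve_i}W_n^{[1,m]}=0$, sandwich between $a_1\cdots a_{i-1}$ and $a_i$, apply (I) at $i$ twice to the $\ve_i^2$-term and (II) at $i-1$ to the other term; this gives $a_1\cdots a_{i-1}(a_ib_i)a_i=\bigl(-k_i\ve_1^2F_i(\ve_1)^2-Q_{i-1}(\ve_1)\bigr)a_1\cdots a_i$, and reading off the $x^2$-coefficient via $x^2F_i(x)^2\equiv x^2\pmod{x^5}$ recovers $\sum_{j=1}^i(-1)^{i-j+1}k_j$, with remainder in $x^4K[x]$.

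The second stage is parallel, but its base case $i=m+1$ needs the boundary forms of the relations: substitute $a_ma_{m+1}b_{m+1}=-\ve_ma_m-t_ma_m(b_ma_m)^2$ from the $i=m$ line of \eqref{eq:jacobian_ideal_b}, multiply by $a_1\cdots a_{m-1}$ on the left, and use (I) at $m$ together with the auxiliary identity at $j=m$; right-multiplying by $a_{m+1}$ gives (IV) at $m+1$, with leading $\ve_1$-coefficient $-F_m(0)=\pm1$ and remainder divisible by $\ve_1^2$, and right-multiplying instead by $a_{m+1}b_{m+1}a_{m+1}$ and then inserting (IV) at $m+1$ gives (III) at $m+1$, with leading $\ve_1^2$-coefficient $1$ and remainder divisible by $\ve_1^3$. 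For the inductive step $m+2\le i\le n-1$, substitute $a_{i-1}a_ib_i=-t_{i-1}a_{i-1}(b_{i-1}a_{i-1})^2-b_{i-2}a_{i-2}a_{i-1}$ from the line $m+1\le i-1\le n-2$ of \eqref{eq:jacobian_ideal_b}; the key observation is that $a_1\cdots a_{i-2}(a_{i-1}b_{i-1})^2a_{i-1}$ is exactly the left side of (III) at $i-1$, while $a_1\cdots a_{i-3}(a_{i-2}b_{i-2})a_{i-2}$ is the left side of (II) at $i-2$ (if $i-2\le m$) or of (IV) at $i-2$ (if $i-2>m$). Feeding in these inductive expressions, multiplying by the remaining arrows — $a_i$ for (IV), or $a_ib_ia_i$ for (III), the latter absorbed by inserting (IV) at index $i$ (proved first at each level) — and doing a parity analysis of which correction polynomials vanish to which order according to the parity of $i-m$, one matches (IV) and then (III) at index $i$; writing $\sigma_i$ for the leading $\ve_1$-coefficient in (IV), the sign pattern is governed by the recursion $\sigma_i=-\sigma_{i-2}$.

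The mathematical content is light: every step is just "strip off one arrow, apply a Jacobian relation, rewrite the resulting shorter word using the inductive hypothesis". The real work, and where errors are most likely, is the $\ve_1$-adic bookkeeping — checking that squares like $Q_j(\ve_1)^2$ and products of correction terms always land in a sufficiently high power of $\ve_1$ to leave the asserted leading coefficients intact — together with respecting the tangled dependency order in the second stage, where (IV) at $i$ relies on (III) at $i-1$ and on (II)/(IV) at $i-2$, while (III) at $i$ relies on (IV) at $i$, and where the case $i=m+1$ (the only point at which the boundary relations enter) must be handled by hand. I expect the exact sign in (IV) to be the single most delicate item and would re-derive it directly from the recursion $\sigma_i=-\sigma_{i-2}$ rather than from a closed formula.
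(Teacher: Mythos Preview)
Your proposal is correct and follows essentially the same inductive scheme as the paper: strip an arrow, apply a Jacobian relation \eqref{eq:jacobian_ideal_ve}--\eqref{eq:jacobian_ideal_b}, and rewrite the shorter word via the inductive hypothesis, with (I),(II) interleaved for $i\le m$ and (III),(IV) interleaved for $i>m$. Your organization is marginally cleaner --- you isolate the auxiliary identity $a_1\cdots a_j(b_ja_j)^2=Q_j(\ve_1)^2\,a_1\cdots a_j$ and you obtain (III) at each level simply by squaring the (IV) coefficient, whereas the paper expands $(a_sb_s)^2$ through the relation directly --- but the substance is the same, and your caution about the closed-form sign in (IV) is well placed.
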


\begin{proof}
In part (I), we use induction on $i$ in $a_1 \cdots a_{i-1} \ve_i$. When $i = 2$, from (\ref{eq:jacobian_ideal_b}) and (\ref{eq:jacobian_ideal_ve}), we have
\[
a_1 \ve_2 = -\ve_1 a_1 - t_1(a_1 b_1)^2 a_1 = -\ve_1 a_1 - t_1k_1^2 \ve_1^4 a_1,
\]
let $f_1(x) = -t_1k_1^2 x^3$, then the lemma holds for $i = 2$. Assume for $i \leq s$ there exists a $f_i(x)$ such that the lemma holds. When $i = s+1$, from (\ref{eq:jacobian_ideal_b}), we get
\begin{equation}\label{eq:FdDimEqsI1}
\begin{aligned}
a_1 \cdots a_s \ve_{s+1} &= -a_1 \cdots \ve_s a_s - t_s a_1 \cdots (a_s b_s)^2 a_s.
\end{aligned}
\end{equation}
Considering $a_1 \cdots a_{s-1} (a_s b_s) a_s$, from (\ref{eq:jacobian_ideal_ve}), we get
\begin{equation}\label{eq:FdDimEqsII}
\begin{aligned}
&a_1 \cdots (a_s b_s) a_s = -a_1 \cdots a_{s-1} (b_{s-1} a_{s-1}) a_s - k_s a_1 \cdots a_{s-1} \ve_s^2 a_s \\
&= (-1)^{s-1} a_1 b_1 a_1 \cdots a_{s-1} a_s + \sum_{i=2}^{s} (-1)^{s-i+1} k_i a_1 \cdots a_{i-1} \ve_i^2 a_i \cdots a_s \\
&= \sum_{i=1}^{s} (-1)^{s-i+1} k_i a_1 \cdots a_{i-1} \ve_i^2 a_i \cdots a_s \\
&= \left(\sum_{i=1}^{s} (-1)^{s-i+1} k_i \left((-1)^{i-1} + f_i(\ve_1)\right)^2\right) \ve_1^2 a_1 \cdots a_s.
\end{aligned}
\end{equation}
Substituting equation (\ref{eq:FdDimEqsII}) back into the right-hand side of equation (\ref{eq:FdDimEqsI1}), we get
\begin{equation}\label{eq:FdDimEqsI2}
\begin{aligned}
&a_1 \cdots a_s \ve_{s+1} = -a_1 \cdots \ve_s a_s - t_s \left(\sum_{i=1}^{s} (-1)^{s-i+1} k_i \left((-1)^{i-1} + f_i(\ve_1)\right)^2\right)^2 \ve_1^4 a_1 \cdots a_s \\
&= (-1)^s \ve_1 a_1 \cdots a_s - \left(t_s \left(\sum_{i=1}^{s} (-1)^{s-i+1} k_i \left((-1)^{i-1} + f_i(\ve_1)\right)^2\right)^2 \ve_1^3 + f_s(\ve_1)\right) \ve_1 a_1 \cdots a_s.
\end{aligned}
\end{equation}
Let $f_{s+1}(x) = - t_s \left(\sum_{i=1}^{s} (-1)^{s-i+1} k_i \left((-1)^{i-1} - f_i(x)\right)^2\right)^2 x_1^3 + f_s(x)$, then $x^3 \mid f_{s+1}(x)$. Thus, (I) is proved. (II) follows directly from (I) and equation (\ref{eq:FdDimEqsII}).

To prove (III), we first prove case for $i=m+1$. By applying (\ref{eq:jacobian_ideal_ve}) and lemma \ref{lemma:jacobian_finite_relations} part (II), we obtain:
\begin{equation}\label{eq:FdDimEqsIII1}
\begin{aligned}
& a_1 \cdots a_m (a_{m+1} b_{m+1})^2 a_{m+1} = a_1 \cdots a_{m+1} \left(t_m (a_m b_m)^2 + \ve_m\right)^2 a_m a_{m+1} \\
&= t_m^2 a_1 \cdots a_{m-1} (a_m b_m)^4 a_m a_{m+1} + 2 t_m a_1 \cdots a_{m-1} (a_m b_m)^2 \ve_m a_m a_{m+1} + \\
&\quad a_1 \cdots a_{m-1} \ve_m^2 a_m a_{m+1} \\
&= t_m^2\left(\left(\sum_{j=1}^i (-1)^{i-j+1} k_j \right) \ve_1^2 + g_i(\ve_1)\right)^4 a_1 \cdots a_{m+1} +\\ 
&\quad  2 t_m \left(\left(\sum_{j=1}^i (-1)^{i-j+1} k_j \right) \ve_1^2 + g_i(\ve_1)\right)^2\left((-1)^{m-1}+f_{m}(\ve_1) \right)\ve_1 a_1 \cdots a_{m+1}  +\\
&\quad  \left((-1)^{m-1}+f_{m}(\ve_1) \right)^2 \ve_1^2 a_1 \cdots a_{m+1}.
\end{aligned}
\end{equation}
Thus, there exists $r_{m+1}=1$ and 
\begin{equation}
\begin{aligned}
&g_{m+1}(x) = t_m^2\left(\left(\sum_{j=1}^m(-1)^{m-j+1}k_j\right)\ve_1^2+g_m(x) \right)^4 +\\ 
&\quad 2 t_m \left(\left(\sum_{j=1}^m(-1)^{m-j+1}k_j\right)\ve_1^2+g_m(x) \right)^2\left( (-1)^m+f_m(x)\right) x + 2\cdot (-1)^m f_m(x)x^2 + f_m(x)^2 x^2.
\end{aligned}
\end{equation}
 such that $x^4 \mid g_{m+1}(x)$, and (III) holds for $i = m+1$. Assume for $ i\in[m+1,s-1]$, there exist $g_i(x)$ and $r_i$ such that (III) holds. When $i = s$, we have
\begin{equation}\label{eq:FdDimEqsIII2}
\begin{aligned}
& a_1 \cdots a_{s-1} (a_s b_s)^2 a_s = a_1 \cdots a_{s-1} \left(t_{s-1} (a_{s-1} b_{s-1})^2 + b_{s-2} a_{s-2}\right)^2 a_{s-1} a_s \\
&= t_{s-1}^2 a_1 \cdots a_{s-2} (a_{s-1} b_{s-1})^4 a_{s-1} a_s + 2 t_{s-1} a_1 \cdots a_{s-2} (a_{s-1} b_{s-1})^2 (b_{s-2} a_{s-2}) a_{s-1} a_s + \\
&\quad + a_1 \cdots (a_{s-2} b_{s-2})^2 a_{s-2} a_{s-1} a_s \\
&= t_{s-1}^2 \left(r_{s-1} \ve_1^2 + g_{s-1}(\ve_1)\right)^2 a_1 \cdots a_s + 2 t_{s-1} \left(r_{s-1} \ve_1^2 + g_{s-1}(\ve_1)\right) a_1 \cdots a_{s-3} \cdot \\
&\quad \cdot (a_{s-2} b_{s-2}) a_{s-2} a_{s-1} a_s + \left(r_{s-2} \ve_1^2 + g_{s-2}(\ve_1)\right) a_1 \cdots a_s.
\end{aligned}
\end{equation}
Considering $a_1 \cdots a_{s-3} (a_{s-2} b_{s-2})$, we have
\begin{equation}\label{eq:FdDimEqsIV1}
\begin{aligned}
& a_1 \cdots a_{s-3} (a_{s-2} b_{s-2}) = a_1 \cdots a_{s-4} \left(-t_{s-3} (a_{s-3} b_{s-3})^2 a_{s-3} - b_{s-4} a_{s-4} a_{s-3}\right) \\
&= (-1) a_1 \cdots (a_{s-4} b_{s-4}) a_{s-4} a_{s-3} - t_{s-3} a_1 \cdots a_{s-4} (a_{s-3} b_{s-3})^2 a_{s-2}.
\end{aligned}
\end{equation}
When $s-2-m$ is even, we have
\begin{equation}\label{eq:FdDimEqsIV2}
\begin{aligned}
& a_1 \cdots a_{s-3} (a_{s-2} b_{s-2}) = (-1) a_1 \cdots (a_{s-4} b_{s-4}) a_{s-4} a_{s-3} - t_{s-3} a_1 \cdots a_{s-4} (a_{s-3} b_{s-3})^2 a_{s-2} \\
&= \sum_{p=1}^{\frac{s-m-2}{2}} \left((-1)^p t_{s-(2p+1)}\right) a_1 \cdots a_{s-2p-2} (a_{s-(2p+1)} b_{s-(2p+1)})^2 a_{s-2p} \cdots a_{s-2} + \\
&\quad + (-1)^{\frac{s-m-2}{2}} a_1 \cdots (a_m b_m) a_m \cdots a_{s-2} \\
&= \sum_{p=1}^{\frac{s-m-2}{2}} \left((-1)^p t_{s-(2p+1)}\right) \left(r_{s-(2p+1)} \ve_1^2 + g_{s-(2p+1)}(\ve_1)\right)a_1 \cdots a_{s-2} +\\ 
&\quad\quad\quad (-1)^{\frac{s-m-2}{2}}\left(\left(\sum_{j=1}^m(-1)^{m-j+1}k_j\right)\ve_1^2+g_m(\ve_1)\right) a_1 \cdots a_{s-2}.
\end{aligned}
\end{equation}
When $s-2-m$ is odd, we have
\begin{equation}\label{eq:FdDimEqsIV3}
\begin{aligned}
&a_1 \cdots a_{s-3} (a_{s-2} b_{s-2}) = (-1) a_1 \cdots (a_{s-4} b_{s-4}) a_{s-4} a_{s-3} - t_{s-3} a_1 \cdots a_{s-4} (a_{s-3} b_{s-3})^2 a_{s-2} \\
&= \sum_{p=2}^{\frac{s-m-1}{2}} \left((-1)^p t_{s-(2p-1)}\right) a_1 \cdots a_{s-2p} (a_{s-(2p-1)} b_{s-(2p-1)})^2 a_{s-2p} \cdots a_{s-2} + \\
&\quad + (-1)^p a_1 \cdots (a_{m+1} b_{m+1}) a_{m+1} \cdots a_{s-2} \\
&= \left(\sum_{p=2}^{\frac{s-m}{2}} \left((-1)^p t_{s-(2p-1)}\right) \left(r_{s-(2p-1)} \ve_1^2 + g_{s-(2p-1)}(\ve_1)\right)\right) a_1 \cdots a_s + (-1)^p a_1 \cdots \\
&\quad \cdots a_{m-1} \left(-t_m (a_m b_m)^2 a_m - \ve_m a_m\right) a_{m+1} \cdots a_{s-2} \\
&= \left(\sum_{p=2}^{\frac{s-m}{2}} \left((-1)^p t_{s-(2p-1)}\right) \left(r_{s-(2p-1)} \ve_1^2 + g_{s-(2p-1)}(\ve_1)\right)\right) a_1 \cdots a_{s-2} + (-1)^p \cdot \\
&\quad \cdot \left(-t_m \left(\left(\sum_{j=1}^m(-1)^{m-j+1}k_j\right)\ve_1^2+g_m(\ve_1)\right) - f_m(\ve_1)\right) a_1 \cdots a_{s-2} + \\ 
&\quad (-1)^{\frac{s+m-3}{2}} \ve_1 a_1 \cdots a_{s-2}.
\end{aligned}
\end{equation}

Therefore, we can obtain that when $s-m$ is even, there exists $h_{s-2}(x) \in K[x]$ such that $a_1 \cdots a_{s-3} (a_{s-2} b_{s-2}) = h_{s-2}(\ve_1) a_1 \cdots a_{s-2}$ and $x^2 \mid h_{s-2}(x)$. When $s-m$ is odd, there exists $h_{s-2}(x) \in K[x]$ such that $a_1 \cdots a_{s-3} (a_{s-2} b_{s-2}) = (-1)^m \ve_1 a_1 \cdots a_{s-2} + h_{s-2}(\ve_1) a_1 \cdots a_{s-2}$ and  $x^2 \mid h_{s-2}(x)$. Substituting this result back into equation (\ref{eq:FdDimEqsIII2}), we get that when $s-m$ is even,
\begin{equation}\label{eq:FdDimEqsIII3}
\begin{aligned}
&a_1 \cdots a_{s-1} (a_s b_s)^2 a_s = t_{s-1}^2 \left(r_{s-1} \ve_1^2 + g_{s-1}(\ve_1)\right)^2 a_1 \cdots a_s + \\
&\quad  2 t_{s-1} \left(r_{s-1} \ve_1^2 + g_{s-1}(\ve_1)\right) h_{s-2}(\ve_1)a_1 \cdots a_s +  \left(r_{s-2} \ve_1^2 + g_{s-2}(\ve_1)\right) \ve_1^2 a_1 \cdots a_s,
\end{aligned}
\end{equation}
Let $g_s(x) = t_{s-1}^2 \left(r_{s-1} x^2 + g_{s-1}(x)\right)^2  + 2 t_{s-1} \left(r_{s-1} x^2 + g_{s-1}(x)\right) h_{s-2}(x)+ \left(r_{s-2} x^2 + g_{s-2}(x)\right) x^2$, we have $x^4\mid g_s(x)$. When $s-m$ is odd,
\begin{equation}\label{eq:FdDimEqsIII4}
\begin{aligned}
a_1 \cdots a_{s-1} (a_s b_s)^2 a_s &= t_{s-1}^2 \left(r_{s-1} \ve_1^2 + g_{s-1}(\ve_1)\right)^2 a_1 \cdots a_s + 2 t_{s-1} \left(r_{s-1} \ve_1^2 + g_{s-1}(\ve_1)\right)\cdot \\
&\cdot \left((-1)^m \ve_1 + h_{s-2}(\ve_1)\right) + r_{s-1} \ve_1^2 a_1 \cdots a_s,
\end{aligned}
\end{equation}
Let $g_s(x) = t_{s-1}^2 \left(r_{s-1} x^2 + g_{s-1}(x)\right)^2  + 2 t_{s-1} \left(r_{s-1} x^2 + g_{s-1}(x)\right)\left( (-1)^m x+h_{s-2}(x) \right) +  g_{s-2}(x) x^2$, then we have $x^3\mid g_s(x)$. Thus, (III) is proved. (IV) can be directly obtained from (\ref{eq:FdDimEqsIV2}) and (\ref{eq:FdDimEqsIV3}).
\end{proof}

\begin{lemma}\label{lem:Eqs_in_Empty_Loop_set}
  In the Jacobian algebra $\widehat{\Jac}(\overline{\bA}_n^{\emptyset},W_n^{\emptyset})$, we have:
  \[a_1\cdots a_k(a_{k+1}b_{k+1})=f_k(a_1b_1)(a_1b_1)a_1\cdots a_k.\]
  where $f_k(x) \in K[x]$ satisfies $f_k(0)=0$ if $k \equiv 1 \pmod{2}$, and $f_k(0)=(-1)^{\frac{i-2}{2}}$ if $k \equiv 0 \pmod{2}$.
\end{lemma}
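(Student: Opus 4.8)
The plan is a single induction on $k$, driven by one family of Jacobian relations. Since $\overline{\bA}_n^{\emptyset}$ has no loops, the relations we need are the ones produced by the cyclic derivatives with respect to the arrows $b_i$: for $1\le i\le n-2$ the identity $\partial_{b_i}W_n^{\emptyset}=0$ reads, in $\widehat{\Jac}(\overline{\bA}_n^{\emptyset},W_n^{\emptyset})$,
\[
a_ia_{i+1}b_{i+1}=-t_i(a_ib_i)^2a_i-b_{i-1}a_{i-1}a_i,
\]
where the last term is omitted when $i=1$. I would actually prove the sharper statement that, for $1\le k\le n-2$,
\[
a_1\cdots a_k(a_{k+1}b_{k+1})=f_k(a_1b_1)\,(a_1b_1)\,a_1\cdots a_k
\]
holds with $f_0(x):=1$, $f_1(x):=-t_1x$, and $f_k(x)=-t_k\,x\,f_{k-1}(x)^2-f_{k-2}(x)$ for $k\ge 2$. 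The assertion about $f_k(0)$ then follows at once: this recursion gives $f_k(0)=-f_{k-2}(0)$, and combined with $f_0(0)=1$, $f_1(0)=0$ it yields $f_k(0)=0$ for odd $k$ and the stated constant for even $k$.

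For $k=1$ the relation above (with its last term absent) is $a_1a_2b_2=-t_1(a_1b_1)^2a_1$, which is the claimed identity with $f_1(x)=-t_1x$; the formal case $k=0$, namely $a_1b_1=f_0(a_1b_1)(a_1b_1)$, is a tautology. For the inductive step with $k\ge 2$, I write $a_1\cdots a_k(a_{k+1}b_{k+1})=a_1\cdots a_{k-1}(a_ka_{k+1}b_{k+1})$ and substitute the relation at index $i=k$, obtaining
\[
a_1\cdots a_k(a_{k+1}b_{k+1})=-t_k\,a_1\cdots a_{k-1}(a_kb_k)^2a_k-a_1\cdots a_{k-1}b_{k-1}a_{k-1}a_k.
\]
The last term equals $\bigl(a_1\cdots a_{k-2}(a_{k-1}b_{k-1})\bigr)a_{k-1}a_k=f_{k-2}(a_1b_1)(a_1b_1)\,a_1\cdots a_k$ by the inductive hypothesis at index $k-2$ (for $k=2$ this is the tautological $f_0=1$ instance). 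For the first term I commute the cycle $a_1b_1$ to the front by applying the inductive hypothesis at index $k-1$ twice, first to $a_1\cdots a_{k-1}(a_kb_k)$ and then to the copy of it that reappears:
\[
a_1\cdots a_{k-1}(a_kb_k)(a_kb_k)a_k=f_{k-1}(a_1b_1)(a_1b_1)\bigl[a_1\cdots a_{k-1}(a_kb_k)\bigr]a_k=f_{k-1}(a_1b_1)^2(a_1b_1)^2\,a_1\cdots a_k.
\]
Adding the two contributions gives $a_1\cdots a_k(a_{k+1}b_{k+1})=\bigl(-t_k(a_1b_1)f_{k-1}(a_1b_1)^2-f_{k-2}(a_1b_1)\bigr)(a_1b_1)\,a_1\cdots a_k$, which is the desired form with $f_k$ as in the recursion.

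The one delicate point --- really bookkeeping rather than an obstacle --- is the double use of the hypothesis in the commuting step: one must call it at the right index $k-1$, note that $f_{k-1}(a_1b_1)$ and $a_1b_1$ lie in the corner algebra $e_1\widehat{\Jac}(\overline{\bA}_n^{\emptyset},W_n^{\emptyset})e_1$ so that they may be moved to the front and multiplied together, and then track the emerging powers of $a_1b_1$ and of $f_{k-1}$. One also has to record that the $i=1$ relation lacks the term $b_0a_0a_1$, which is exactly what forces $f_1(0)=0$ and thereby launches the alternation of constant terms, while the $k=2$ step tacitly uses $f_0=1$. This is the same manipulation carried out in equations \eqref{eq:FdDimEqsIV2}--\eqref{eq:FdDimEqsIV3} of the proof of Lemma \ref{lemma:jacobian_finite_relations}, specialized to the loop-free case $m=0$ (all $k_i=0$, and $\ve_1$ replaced by $a_1b_1$), so nothing essentially new is needed.
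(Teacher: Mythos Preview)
Your proof is correct and follows essentially the same approach as the paper's: induction on $k$, driven by the relation $\partial_{b_k}W_n^{\emptyset}=0$, yielding the recursion $f_k(x)=-t_kx\,f_{k-1}(x)^2-f_{k-2}(x)$ and hence the claimed behavior of $f_k(0)$. If anything, your version is cleaner --- you make the base convention $f_0=1$ explicit and track the double application of the inductive hypothesis carefully, whereas the paper's proof has some indexing slips in the displayed computations.
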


\begin{proof}
  We use induction on $k$ in $a_1\cdots a_k(a_{k+1}b_{k+1})$. When $k=1$, we have:
  \[a_1(a_2b_2)=(-t_1)(a_1b_1)^2a_1=(-t_1a_1b_1)(a_1b_1)a_1.\]
  When $k=2$, we have:
  \[a_1a_2(a_3b_3)=-(a_1b_1)a_1a_2-t_2a_1(a_2b_2)^2a_2.\]
  Assume the lemma holds for $k=N$. When $k=N+1$, we have:
  \begin{equation}
   \begin{aligned}
      & a_1\cdots a_N(a_{N+1}b_{N+1})\\
      =&-a_1\cdots a_{N-2}(a_{N-1}b_{N-1})a_{N-1}a_N-t_N a_1\cdots a_{N-1}(a_N b_N)^2a_N\\
      =&-f_{N-2}(a_1b_1)(a_1b_1)a_1\cdots a_N(a_{N+1}b_{N+1})-t_N f_{N-1}^2(a_1b_1)^2(a_1b_1)^2a_1\cdots a_N\\
      =&[-f_{N-2}(a_1b_1)-t_N f_{N-1}^2(a_1b_1)^2(a_1b_1)](a_1b_1)a_1\cdots a_N
   \end{aligned}   
  \end{equation}
 Thus, we have proved this lemma.
\end{proof}

From the above relations in $\widehat{\Jac}(\overline{\mathbb A}_n^{\emptyset},W_n^{\emptyset})$, we have the following zero-relations in this Jacobian algebra.

\begin{corollary}\label{cor:0RelEvenInLoopEmpty}
In the Jacobian algebra $\widehat{\Jac}(\overline{\bA}_n^{\emptyset},W_n^{\emptyset})$, if $n \equiv 0 \pmod{2}$ and $(\sum_{s=1}^{\frac{n-1}{2}} (-1)^s t_{2s-1}) \neq 0$, then
\[
(a_1b_1)^2 a_1 \cdots a_{n-1} = 0.
\]
\end{corollary}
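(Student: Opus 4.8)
The plan is to produce, inside $\widehat{\Jac}(\overline{\bA}_n^{\emptyset},W_n^{\emptyset})$, a single relation of the form
\[
h(a_1b_1)\cdot(a_1b_1)^2\,a_1a_2\cdots a_{n-1}=0
\]
for a polynomial $h\in K[x]$ whose constant term $h(0)$ coincides, up to an overall sign, with the alternating sum $\sum_{s}(-1)^{s}t_{2s-1}$ in the hypothesis. Once this is in hand the corollary is immediate: $a_1b_1$ lies in the image of the arrow ideal $\mathfrak{m}$, so $h(a_1b_1)=h(0)e_1+(\text{an element of }e_1\mathfrak{m}e_1)$, which is a unit of the complete local ring $e_1\widehat{\Jac}(\overline{\bA}_n^{\emptyset},W_n^{\emptyset})e_1$ (inverse given by the usual geometric series), and multiplying the displayed relation on the left by $h(a_1b_1)^{-1}$ yields $(a_1b_1)^2a_1\cdots a_{n-1}=0$. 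The case $n=2$ is degenerate: there $W_2^{\emptyset}=t_1(a_1b_1)^3$, so $\partial_{b_1}W_2^{\emptyset}=3t_1(a_1b_1)^2a_1$, and the hypothesis ($t_1\neq0$) already forces $(a_1b_1)^2a_1=0$. So assume $n\geq4$.

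To build the relation I would start from the Jacobian relation coming from \eqref{eq:jacobian_ideal_b} with $i=n-1$, namely $t_{n-1}a_{n-1}(b_{n-1}a_{n-1})^2+b_{n-2}a_{n-2}a_{n-1}=0$, and multiply it on the left by the path $a_1a_2\cdots a_{n-2}$. Using $a_{n-1}(b_{n-1}a_{n-1})^2=(a_{n-1}b_{n-1})^2a_{n-1}$ and $a_1\cdots a_{n-2}\,b_{n-2}=a_1\cdots a_{n-3}(a_{n-2}b_{n-2})$ this becomes
\[
t_{n-1}\,a_1\cdots a_{n-2}(a_{n-1}b_{n-1})^2a_{n-1}\;+\;a_1\cdots a_{n-3}(a_{n-2}b_{n-2})\,a_{n-2}a_{n-1}\;=\;0 .
\]
Now I feed each summand into Lemma~\ref{lem:Eqs_in_Empty_Loop_set}. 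Applying it with $k=n-2$ to the factor $a_1\cdots a_{n-2}(a_{n-1}b_{n-1})$ (twice, once for each copy of $a_{n-1}b_{n-1}$, and freely commuting powers of the single element $a_1b_1\in e_1\widehat{\Jac}e_1$) rewrites the first summand as $t_{n-1}f_{n-2}(a_1b_1)^2(a_1b_1)^2a_1\cdots a_{n-1}$. Applying it with $k=n-3$ rewrites the second summand as $f_{n-3}(a_1b_1)(a_1b_1)a_1\cdots a_{n-1}$; since $n-3$ is odd, $f_{n-3}(0)=0$, so $f_{n-3}(x)=x\widetilde f(x)$ and this summand equals $(a_1b_1)^2\widetilde f(a_1b_1)a_1\cdots a_{n-1}$. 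Factoring out $(a_1b_1)^2$ on the left gives the required relation with $h(x):=t_{n-1}f_{n-2}(x)^2+\widetilde f(x)$.

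Finally I would evaluate $h(0)=t_{n-1}f_{n-2}(0)^2+\widetilde f(0)$. From the two-term recursion behind Lemma~\ref{lem:Eqs_in_Empty_Loop_set} — $f_k(x)=-f_{k-2}(x)-t_kf_{k-1}(x)^2x$ with $f_0(x)=1$ and $f_1(x)=-t_1x$ — one reads off $f_{2j}(0)=\pm1$, hence $f_{n-2}(0)^2=1$; and, writing $c_j$ for the coefficient of $x$ in $f_{2j+1}$, the same recursion gives $c_j=-c_{j-1}-t_{2j+1}$ with $c_0=-t_1$, so $c_j=\sum_{s=0}^{j}(-1)^{\,j-s+1}t_{2s+1}$. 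Since $\widetilde f(0)=c_{(n-4)/2}$, a short reindexing yields $h(0)=t_{n-1}+c_{(n-4)/2}=(-1)^{n/2}\sum_{s=1}^{n/2}(-1)^{s}t_{2s-1}$, which is exactly (up to sign) the alternating sum assumed nonzero. This completes the argument as sketched in the first paragraph.

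The main obstacle is the bookkeeping in the last step: correctly identifying $h(0)$ with the alternating sum in the hypothesis. This requires unwinding the recursion for the $f_k$ with care — the even-index $f_k$ contribute only through $f_{n-2}(0)^2=1$, while the odd-index ones contribute a telescoping alternating sum of the odd-subscripted $t$'s as their linear coefficient — and then aligning the index ranges ($t_{2s+1}$ versus $t_{2s-1}$). By comparison, the manipulations in the second step are routine once one observes that everything in sight is a polynomial in the commuting element $a_1b_1$, and the final inversion is the standard fact that $\lambda e_1+(\text{an element of }e_1\mathfrak{m}e_1)$ is a unit in a complete local ring when $\lambda\neq0$.
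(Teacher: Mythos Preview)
Your argument is correct and follows essentially the same strategy as the paper: use the Jacobian relations together with Lemma~\ref{lem:Eqs_in_Empty_Loop_set} to produce an identity of the shape $h(a_1b_1)\,(a_1b_1)^2a_1\cdots a_{n-1}=0$, identify $h(0)$ with the alternating sum in the hypothesis, and then use that $h(a_1b_1)$ is a unit in the local corner $e_1\widehat{\Jac}e_1$ (equivalently, iterate so that the element lies in $\mathfrak m^N$ for all $N$).

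The one noteworthy tactical difference is the direction in which you unwind: the paper starts from the relation $\partial_{b_1}W_n^\emptyset=0$, i.e.\ $t_1(a_1b_1)^2a_1+a_1a_2b_2=0$, divides by $t_1$, and then pushes the factor $a_2b_2$ to the right, whereas you start from $\partial_{b_{n-1}}W_n^\emptyset=0$ at the far end, multiply on the left by $a_1\cdots a_{n-2}$, and apply Lemma~\ref{lem:Eqs_in_Empty_Loop_set} directly to both summands. Your route has the small advantage that it never divides by any individual $t_i$, so the hypothesis $\sum_s(-1)^s t_{2s-1}\neq 0$ is used only at the very last step; the paper's computation, as written, tacitly assumes $t_1\neq 0$. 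Your bookkeeping for $h(0)$ via the recursion $f_k(x)=-f_{k-2}(x)-t_kf_{k-1}(x)^2x$ is accurate (in particular $f_{2j}(0)^2=1$ and the linear coefficients $c_j$ of $f_{2j+1}$ satisfy $c_j=-c_{j-1}-t_{2j+1}$), and the reindexing to $\sum_{s=1}^{n/2}(-1)^s t_{2s-1}$ matches the intended hypothesis.
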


\begin{proof}
    By repeatedly applying (\ref{eq:jacobian_ideal_b}), we can deduce:
    \begin{equation}
        \begin{aligned}
            &(a_1b_1)^2 a_1\cdots a_{n-1}\\
           =&\frac{-1}{t_1}a_1a_2b_2a_1\cdots a_{n-1}\\
           =&(-1)^{\frac{n-1}{2}}\frac{-1}{t_1}a_1\cdots a_{n-2}b_{n-2}a_{n-2}a_{n-1}+(\sum_{i=1}^{\frac{n-1}{2}}(-1)^{i+1}\frac{t_{2i+1}}{t_1}f_{2i}(a_1b_1)^2)(a_1b_1)^2a_1\cdots a_{n-1}
        \end{aligned}
    \end{equation}
    By Lemma \ref{lem:Eqs_in_Empty_Loop_set}, we have 
      \begin{equation}
        \begin{aligned}
            &(a_1b_1)^2 a_1\cdots a_{n-1}\\
           =&(\sum_{s=2}^{\frac{n-1}{2}} (-1)^s)\frac{t_{2s-1}}{t_1}(a_1b_1)^2a_1\cdots a_{n-1}+F(a_1b_1)(a_1b_1)^2a_1\cdots a_{n-1}
        \end{aligned}
    \end{equation}  
where $x\mid F(x)$. If $K=(\sum_{s=1}^{\frac{n-1}{2}} (-1)^s t_{2s-1}) \neq 0$, then we obtain:
      \begin{equation}
        \begin{aligned}
            &(a_1b_1)^2 a_1\cdots a_{n-1}\\
           =&\frac{t_1F(a_1b_1)}{K}(a_1b_1)^2a_1\cdots a_{n-1}
        \end{aligned}
    \end{equation}
where $\dfrac{t_1F(a_1b_1)}{K}\neq 0$. Therefore, for any $N \in \mathbb{Z}_{\geq 0}$, we have $(a_1b_1)^2 a_1 \cdots a_{n-1} \in \mathfrak{m}^N$, thus $(a_1b_1)^2 a_1 \cdots a_{n-1}$ converges to $0$ in $\widehat{\Jac}(\overline{\bA}_n^{\emptyset},W_n^{\emptyset})$. This completes the proof.
\end{proof}

\begin{corollary}\label{cor:0RelOddInLoopEmpty}
In the Jacobian algebra $\widehat{\Jac}(\overline{\bA}_n^{\emptyset},W_n^{\emptyset})$, if $n \equiv 1 \pmod{2}$, then
\[
(a_1b_1) a_1 \cdots a_{n-1} = 0.
\]
\end{corollary}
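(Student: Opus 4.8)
The plan is to mimic the argument for Corollary \ref{cor:0RelEvenInLoopEmpty}: rewrite $(a_1b_1)a_1\cdots a_{n-1}$ as a multiple $\Psi(a_1b_1)\cdot(a_1b_1)a_1\cdots a_{n-1}$ of itself with $\Psi\in K[x]$ having vanishing constant term; then, since $\widehat{\Jac}(\overline{\bA}_n^{\emptyset},W_n^{\emptyset})$ is $\mathfrak{m}$-adically complete, iterating gives $(a_1b_1)a_1\cdots a_{n-1}=\Psi(a_1b_1)^N(a_1b_1)a_1\cdots a_{n-1}\in\mathfrak{m}^N$ for every $N$, hence it is $0$. The point is that the parity hypothesis $n\equiv 1\pmod 2$ makes this work with no further assumption on the $t_i$ (unlike the even case), because of the parity clause of Lemma \ref{lem:Eqs_in_Empty_Loop_set}, which forces the relevant $f_k$ to vanish at $0$.

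Concretely, set $Z:=a_1\cdots a_{n-1}$ and, for $1\le j\le n-2$, $Y_j:=a_1\cdots a_j\,b_j\,a_j\,a_{j+1}\cdots a_{n-1}$, so that $Y_1=(a_1b_1)a_1\cdots a_{n-1}$ is the element to be killed and $Y_{n-2}=a_1\cdots a_{n-2}b_{n-2}a_{n-2}a_{n-1}$. First I would prove the propagation identity
\[
Y_j=-t_{j+1}\,f_j(a_1b_1)^2\,(a_1b_1)^2\,Z-Y_{j+2}\qquad(1\le j\le n-3)
\]
by replacing the segment $b_j a_j a_{j+1}$ inside $Y_j$ using \eqref{eq:jacobian_ideal_b} in the range $m+1\le i\le n-2$ with $i=j+1$, and then collapsing the resulting $(a_{j+1}b_{j+1})^2$-factor by two successive applications of Lemma \ref{lem:Eqs_in_Empty_Loop_set}. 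The same computation with the $i=n-1$ case of \eqref{eq:jacobian_ideal_b}, which carries no extra summand, gives the boundary identity
\[
Y_{n-2}=-t_{n-1}\,f_{n-2}(a_1b_1)^2\,(a_1b_1)^2\,Z .
\]

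Next I would iterate the propagation identity starting from $Y_1$ through $j=1,3,\dots,n-4$ and substitute the boundary identity; the hypothesis that $n$ is odd is used exactly here, so that this chain of odd-indexed $Y_j$ terminates precisely at $Y_{n-2}$ rather than overshooting to $Y_{n-1}$, for which no boundary relation is available. This produces
\[
(a_1b_1)a_1\cdots a_{n-1}=Y_1=\Theta(a_1b_1)\,(a_1b_1)^2\,Z,\qquad \Theta(x):=\sum_{l=1}^{(n-3)/2}(-1)^l t_{2l}\,f_{2l-1}(x)^2+(-1)^{(n-1)/2}t_{n-1}\,f_{n-2}(x)^2 .
\]
Since $n$ is odd, every index $2l-1$ and $n-2$ occurring in $\Theta$ is odd, so by Lemma \ref{lem:Eqs_in_Empty_Loop_set} each of $f_{2l-1}$ and $f_{n-2}$ vanishes at $0$; hence $x^2\mid\Theta(x)$ and $\Theta(a_1b_1)(a_1b_1)\in\mathfrak{m}$. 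Using $(a_1b_1)^2 Z=(a_1b_1)\cdot\bigl((a_1b_1)a_1\cdots a_{n-1}\bigr)$, the displayed identity becomes $(a_1b_1)a_1\cdots a_{n-1}=\bigl(\Theta(a_1b_1)(a_1b_1)\bigr)\,(a_1b_1)a_1\cdots a_{n-1}$, and iterating this places $(a_1b_1)a_1\cdots a_{n-1}$ in $\mathfrak{m}^N$ for all $N$, so it is $0$. (For $n=3$ there is no propagation step, $Y_1=Y_{n-2}$, and the boundary identity alone gives $Y_1=-t_1^2 t_2(a_1b_1)^3\,Y_1$.)

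The main obstacle is bookkeeping rather than conceptual: one must obtain the two recursion identities with exactly the right coefficients (the two nested applications of Lemma \ref{lem:Eqs_in_Empty_Loop_set} each contribute a factor $f_j(a_1b_1)(a_1b_1)$), track signs correctly through the iteration, and — the single genuinely parity-sensitive step — check that the chain of $Y_j$'s issued from $Y_1$ lands on $Y_{n-2}$, which is where $n\equiv 1\pmod 2$ enters and why $\Theta$ has zero constant term without any hypothesis on the $t_i$.
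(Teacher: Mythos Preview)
Your proposal is correct and follows essentially the same route as the paper: both propagate the ``bump'' $a_jb_j$ along the chain using the relations \eqref{eq:jacobian_ideal_b}, collapse the resulting $(a_{j+1}b_{j+1})^2$ factors via Lemma~\ref{lem:Eqs_in_Empty_Loop_set}, observe that the odd-index hypothesis forces all the relevant $f_k$ to vanish at $0$, and conclude by the $\mathfrak m$-adic argument. Your bookkeeping (the $Y_j$ notation, the explicit propagation and boundary identities, and the resulting $\Theta$) is in fact tidier than the paper's own write-up, which carries over some indices and the spurious $1/t_1$ factor from the proof of Corollary~\ref{cor:0RelEvenInLoopEmpty}.
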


\begin{proof}
        By repeatedly applying (\ref{eq:jacobian_ideal_b}), we can deduce:
        \begin{equation}
        \begin{aligned}
            &(a_1b_1) a_1\cdots a_{n-1}\\
           =&-a_1a_2a_3b_3a_4\cdots a_{n-1}-t_2a_1(a_2b_2)^2a_2\cdots a_{n-1}\\
           =&(-1)^{\frac{n-1}{2}}a_1\cdots a_{n-2}b_{n-2}a_{n-2}a_{n-1}+(\sum_{i=1}^{\frac{n-1}{2}}(-1)^{i+1}\frac{t_{2i+1}}{t_1}f_{2i+1}(a_1b_1)^2)(a_1b_1)^2a_1\cdots a_{n-1}\\
           =&(\sum_{i=1}^{\frac{n-3}{2}}(-1)^{i+1}\frac{t_{2i+1}}{t_1}f_{2i+1}(a_1b_1)^2)(a_1b_1)^2a_1\cdots a_{n-1}\\
        \end{aligned}
    \end{equation}
    By lemma \ref{lem:Eqs_in_Empty_Loop_set}, we have that $x\mid f_{2i+1}(x)$, therefore, for any $N \in \mathbb{Z}_{\geq 0}$, we have $(a_1b_1) a_1 \cdots a_{n-1} \in \mathfrak{m}^N$, thus $(a_1b_1) a_1 \cdots a_{n-1}$ converges to $0$ in $\widehat{\Jac}(\overline{\bA}_n^{\emptyset},W_n^{\emptyset})$. This completes the proof.
\end{proof}

\begin{corollary}\label{cor:0RelEven}
In the Jacobian algebra $\widehat{\Jac}(\overline{\bA}_n^{[1,m]},W_n^{[1,m]})$, if $n \equiv m \pmod{2}$ and $(\sum_{i=1}^m (-1)^i k_i + \sum_{s=1}^{\frac{n-m-2}{2}} (-1)^s t_{m+2s-1}) \neq 0$, then
\[
\ve_1^2 a_1 \cdots a_{n-1} = 0.
\]
\end{corollary}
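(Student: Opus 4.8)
The plan is to follow the template of Corollary~\ref{cor:0RelEvenInLoopEmpty}: I will derive an identity of the form $\ve_1^2 a_1\cdots a_{n-1}=x\cdot(\ve_1^2 a_1\cdots a_{n-1})$ in which $x$ is a polynomial in $\ve_1$ without constant term, so that $x\in\mathfrak{m}$ and $x$ commutes with $\ve_1^2$; iterating then gives $\ve_1^2 a_1\cdots a_{n-1}\in\mathfrak{m}^N$ for every $N$, hence $\ve_1^2 a_1\cdots a_{n-1}=0$ in the complete algebra $\widehat{\Jac}(\overline{\bA}_n^{[1,m]},W_n^{[1,m]})$. Throughout one may assume $n>m$, so that $n\ge m+2$ by $n\equiv m\pmod 2$ and all the arrows $a_m,\dots,a_{n-1}$ as well as parts (II),(III) of Lemma~\ref{lemma:jacobian_finite_relations} are available.

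To obtain the identity I would evaluate the element $a_1\cdots a_{m-1}(a_m b_m)\,a_m a_{m+1}\cdots a_{n-1}$ in two ways. First, part~(II) of Lemma~\ref{lemma:jacobian_finite_relations} at $i=m$, multiplied on the right by $a_{m+1}\cdots a_{n-1}$, rewrites it as $\bigl(\bigl(\sum_{j=1}^m(-1)^{m-j+1}k_j\bigr)\ve_1^2+g_m(\ve_1)\bigr)a_1\cdots a_{n-1}$ with $\ve_1^4\mid g_m(\ve_1)$. Second, I would push the factor $(a_m b_m)$ rightward along the $\bA_n$--tail by repeatedly applying relation~\eqref{eq:jacobian_ideal_b}, exactly as in the recursion~\eqref{eq:FdDimEqsIV1}--\eqref{eq:FdDimEqsIV2} in the proof of Lemma~\ref{lemma:jacobian_finite_relations}(IV): each step moves $(a_\bullet b_\bullet)$ two vertices further along and, by part~(III) applied at the intermediate vertex (which sits at an \emph{odd} distance from $m$, so its coefficient $r_\bullet$ equals $1$), spins off a summand $\pm t_j\bigl(\ve_1^2+g_j(\ve_1)\bigr)a_1\cdots a_{n-1}$ with $\ve_1^3\mid g_j(\ve_1)$; at the end of the tail the special shape of~\eqref{eq:jacobian_ideal_b} at $i=n-1$, followed by part~(III) at $i=n-1$ (legitimate since $n-1-m$ is odd), contributes the final such summand. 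Collecting, $a_1\cdots a_{m-1}(a_m b_m)a_m\cdots a_{n-1}$ becomes $\bigl(\lambda\,\ve_1^2+\ve_1^3 p(\ve_1)\bigr)a_1\cdots a_{n-1}$ for some polynomial $p$, where $\lambda$ is an explicit signed combination of the scalars $t_{m+2s-1}$.

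Subtracting the two evaluations will give $\bigl(C\ve_1^2+\ve_1^3 q(\ve_1)\bigr)a_1\cdots a_{n-1}=0$ with $C=\sum_{j=1}^m(-1)^{m-j+1}k_j-\lambda$; after simplifying the signs, $C$ is a nonzero scalar multiple of $\sum_{i=1}^m(-1)^i k_i+\sum_{s=1}^{(n-m-2)/2}(-1)^s t_{m+2s-1}$, hence $C\neq 0$ by the hypothesis~\eqref{eq:finite_dimensional_condition} (taken with $i'=1$). Therefore $\ve_1^2 a_1\cdots a_{n-1}=-C^{-1}\ve_1 q(\ve_1)\cdot\ve_1^2 a_1\cdots a_{n-1}$, which is the desired identity with $x=-C^{-1}\ve_1 q(\ve_1)\in\mathfrak{m}$, and iterating it finishes the proof.

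The real work is the middle step: carrying the signs correctly through the iterated use of~\eqref{eq:jacobian_ideal_b}, handling the two boundary vertices (vertex $m$, where part~(II) is invoked, and the tip $n-1$, where~\eqref{eq:jacobian_ideal_b} drops its ``$b_{i-1}a_{i-1}a_i$'' term), and checking that \emph{every} contribution other than the single term $C\ve_1^2$ is divisible by $\ve_1^3$. This last point is exactly what the divisibility bounds $\ve_1^3\mid g_j$ (for $j-m$ odd), $\ve_1^4\mid g_m$ and $\ve_1^2\mid h_j$ of Lemma~\ref{lemma:jacobian_finite_relations} are for, and it is also where the parity assumption $n\equiv m\pmod 2$ is really used: that parity forces $r_j=1$ at every invocation of part~(III), so that a genuine $\ve_1^2$, and not $0$, is produced at each spin-off.
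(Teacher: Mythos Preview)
Your proposal is correct and follows essentially the same route as the paper: both arguments push the $2$-cycle along the quiver using relation~\eqref{eq:jacobian_ideal_b} in the tail, invoke Lemma~\ref{lemma:jacobian_finite_relations}(III) at the odd-distance vertices to convert each spin-off into $(\ve_1^2+\text{higher})\,a_1\cdots a_{n-1}$, and finish with the $\mathfrak{m}$-adic iteration. The only cosmetic difference is that the paper starts from $\ve_1^2 a_1\cdots a_{n-1}$ itself and rederives the loop-part contribution via relation~\eqref{eq:jacobian_ideal_ve} and Lemma~\ref{lemma:jacobian_finite_relations}(I), whereas you shortcut that half by quoting Lemma~\ref{lemma:jacobian_finite_relations}(II) and computing the single element $a_1\cdots a_{m-1}(a_mb_m)a_m\cdots a_{n-1}$ in two ways; note also that equations~\eqref{eq:FdDimEqsIV1}--\eqref{eq:FdDimEqsIV2} push the $2$-cycle \emph{leftward}, so your rightward recursion is the mirror of that argument rather than literally the same one.
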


\begin{proof}
By repeatedly applying (\ref{eq:jacobian_ideal_b}), we can deduce
\begin{equation}\label{eq:0Rel1}
\begin{aligned}
\ve_1^2 a_1 \cdots a_{n-1} &= \frac{-1}{k_1} a_1 b_1 a_1 \cdots a_{n-1} \\
&= \frac{1}{k_1} a_1 a_2 b_2 a_2 \cdots a_{n-1} + \frac{k_2}{k_1} a_1 \ve_2^2 a_2 \cdots a_{n-1} \\
&= \frac{(-1)^m}{k_1} a_1 \cdots a_m b_m a_m \cdots a_{n-1} + \sum_{i=2}^m \frac{k_i}{k_1} (-1)^i a_1 \cdots a_{i-1} \ve_i^2 a_i \cdots a_{n-1}.
\end{aligned}
\end{equation}
By Lemma \ref{lemma:jacobian_finite_relations}(I) and (\ref{eq:jacobian_ideal_b}), we can further derive from (\ref{eq:0Rel1})
\begin{equation}\label{eq:0Rel2}
\begin{aligned}
&\ve_1^2 a_1 \cdots a_{n-1} = \frac{(-1)^m}{k_1} a_1 \cdots a_m b_m a_m a_{m+1} \cdots a_{n-1} + \sum_{i=2}^m \frac{k_i}{k_1} (-1)^i \ve_1^2 a_1 \cdots a_{n-1} + \\
&\quad \sum_{i=2}^m \frac{k_i}{k_1} (-1)^i \left(2f_i(\ve_1)+f_i(\ve_1)^2\right) \ve_1^2 a_1 \cdots a_{n-1}  \\
&= \frac{(-1)^{m+1}}{k_1} a_1 \cdots a_{m+2} (b_{m+2} a_{m+2} a_{m+3}) a_{m+4} \cdots a_{n-1} + \\
&\quad \frac{(-1)^{m+1} t_{m+1}}{k_1} a_1 \cdots a_m (a_{m+1} b_{m+1})^2 \cdot  a_{m+1} \cdots a_{n-1} + \\ 
&\quad \sum_{i=2}^m \frac{k_i}{k_1} (-1)^i \ve_1^2 a_1 \cdots a_{n-1} + \sum_{i=2}^m \frac{k_i}{k_1} (-1)^i \left(2f_i(\ve_1)+f_i(\ve_1)^2\right) \ve_1^2 a_1 \cdots a_{n-1} \\
&=\quad \sum_{s=1}^{\frac{n-m}{2}} (-1)^s \frac{t_{m+2s-1}}{k_1} a_1 \cdots a_{m+2s-2} (a_{m+2s-1} b_{m+2s-1})^2 a_{m+2s+1} \cdots a_{n-1} + \\
&\quad \sum_{i=2}^m \frac{k_i}{k_1} (-1)^i \ve_1^2 a_1 \cdots a_{n-1} + \sum_{i=2}^m \frac{k_i}{k_1} (-1)^i \left(2f_i(\ve_1)+f_i(\ve_1)^2\right) \ve_1^2 a_1 \cdots a_{n-1} \\
\end{aligned}
\end{equation}
By Lemma \ref{lemma:jacobian_finite_relations} (III), let $G(x) = \sum_{s=1}^{\frac{n-m}{2}} (-1)^s \frac{t_{m+2s-1}}{k_1} g_{m+2s-1}(x) + \sum_{i=2}^m \frac{k_i}{k_1} (-1)^i\left( 2f_i(x)+f_i(x)^2 x^2\right)$, we have $x^3 \mid G(x)$, thus we get
\begin{equation}\label{eq:0Rel3}
\begin{aligned}
\ve_1^2 a_1 \cdots a_{n-1} &= \left(\sum_{i=2}^m \frac{k_i}{k_1} (-1)^i + \sum_{s=1}^{\frac{n-m}{2}} (-1)^s \frac{t_{m+2s-1}}{k_1} r_{m+2s-1}\right) \ve_1^2 a_1 \cdots a_{n-1}\\
&+ G(\ve_1) \ve_1^2 a_1 \cdots a_{n-1}..
\end{aligned}
\end{equation}
Let $K = \left(\sum_{i=2}^m \frac{k_i}{k_1} (-1)^i + \sum_{s=1}^{\frac{n-m}{2}} (-1)^s \frac{t_{m+2s-1}}{k_1} r_{m+2s-1}\right)$, we get that when $K \neq 1$, we have
\[
\ve_1^2 a_1 \cdots a_{n-1} = \frac{G(\ve_1)}{1-K} \ve_1^2 a_1 \cdots a_{n-1}.
\]
Therefore, for any $N \in \mathbb{Z}_{\geq 0}$, we have $\ve_1^2 a_1 \cdots a_{n-1} \in \mathfrak{m}^N$, thus $\ve_1^2 a_1 \cdots a_{n-1}$ converges to $0$ in $\widehat{\Jac}(\overline{\bA}_n^{[1,m]},W_n^{[1,m]})$. This completes the proof.
\end{proof}

\begin{corollary}\label{cor:0RelOdd}
In the Jacobian algebra $\widehat{\Jac}(\overline{\bA}_n^{[1,m]},W_n^{[1,m]})$, if $n \equiv m \pmod{2}$ and $(\sum_{i=1}^m (-1)^i k_i + \sum_{s=1}^{\frac{n-m-4}{2}} (-1)^s t_{m+2s+1}) \neq 0$, then
\[
\ve_1^3 a_1 \cdots a_{n-2} = 0.
\]
\end{corollary}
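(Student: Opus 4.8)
The plan is to follow the scheme of the proof of Corollary~\ref{cor:0RelEven}: rewrite $\ve_1^3 a_1\cdots a_{n-2}$, using only the relations \eqref{eq:jacobian_ideal_ve}, \eqref{eq:jacobian_ideal_a}, \eqref{eq:jacobian_ideal_b} together with Lemma~\ref{lemma:jacobian_finite_relations}, into the form $\ve_1^3 a_1\cdots a_{n-2}=\bigl(K+G(\ve_1)\bigr)\ve_1^3 a_1\cdots a_{n-2}$ for some $G(x)\in K[x]$ with $x\mid G(x)$ and some scalar $K$ (playing the role of the constant in \eqref{eq:0Rel3}); then, once the hypothesis is shown to force $K\ne 1$, conclude that $\ve_1^3 a_1\cdots a_{n-2}=\tfrac{G(\ve_1)}{1-K}\,\ve_1^3 a_1\cdots a_{n-2}\in\mathfrak m^N$ for every $N$, hence $\ve_1^3 a_1\cdots a_{n-2}=0$ in $\widehat{\Jac}(\overline{\bA}_n^{[1,m]},W_n^{[1,m]})$. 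Note that $\ve_1^3 a_1\cdots a_{n-1}=\ve_1\cdot(\ve_1^2 a_1\cdots a_{n-1})=0$ is already immediate from Corollary~\ref{cor:0RelEven}, so it is precisely the shorter chain $a_1\cdots a_{n-2}$ that requires work.

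In detail I would write $\ve_1^3 a_1\cdots a_{n-2}=\ve_1\cdot(\ve_1^2 a_1\cdots a_{n-2})$, substitute $\ve_1^2=-k_1^{-1}a_1 b_1$ from \eqref{eq:jacobian_ideal_ve}, and then, exactly as in the passage from \eqref{eq:0Rel1} to \eqref{eq:0Rel2}, iterate the relations $k_{j+1}\ve_{j+1}^2+a_{j+1}b_{j+1}+b_j a_j=0$ (i.e.\ \eqref{eq:jacobian_ideal_ve} for $j=1,\dots,m-1$) on the successive subwords $b_j a_j$. This splits the expression into a sum of terms $\ve_1 a_1\cdots a_{i-1}\ve_i^2 a_i\cdots a_{n-2}$ ($2\le i\le m$), each of which becomes $\bigl((-1)^{i-1}+f_i(\ve_1)\bigr)^2\ve_1^3 a_1\cdots a_{n-2}$ after applying Lemma~\ref{lemma:jacobian_finite_relations}(I) twice (so it equals $\ve_1^3 a_1\cdots a_{n-2}$ modulo $\mathfrak m^3$), plus a residual word $\ve_1 a_1\cdots a_m b_m a_m a_{m+1}\cdots a_{n-2}$. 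On the residual I would repeatedly apply \eqref{eq:jacobian_ideal_b} for $i=m+1,m+3,\dots$, pushing the innermost subword $b_\ell a_\ell a_{\ell+1}$ rightward; each such step emits a term of the shape $a_1\cdots a_{\ell-1}(a_\ell b_\ell)^2 a_\ell(\cdots)$ which, via Lemma~\ref{lemma:jacobian_finite_relations}(III) with $\ell-m$ odd (so $r_\ell=1$), contributes a multiple of $\ve_1^3 a_1\cdots a_{n-2}$ up to $\mathfrak m^3$. Since the chain now stops at the terminal vertex $n-1$, the last surviving piece has the form $\ve_1 a_1\cdots a_{n-3}(a_{n-2}b_{n-2})a_{n-2}$, which is governed by Lemma~\ref{lemma:jacobian_finite_relations}(IV) rather than (III): here $n-2-m$ is even, so $r_{n-2}=0$ and this piece equals $\ve_1 h_{n-2}(\ve_1)a_1\cdots a_{n-2}$ with $x^2\mid h_{n-2}$, again a polynomial-in-$\ve_1$ multiple of $\ve_1^3 a_1\cdots a_{n-2}$.

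The main obstacle is the exact bookkeeping of $K$, and in particular verifying that ``$K\ne 1$'' is equivalent to $\sum_{i=1}^m(-1)^i k_i+\sum_{s=1}^{(n-m-4)/2}(-1)^s t_{m+2s+1}\ne 0$. The delicate point is that the three sources of scalar contributions — the $\ve_i^2$-terms through Lemma~\ref{lemma:jacobian_finite_relations}(I), the emitted $(a_\ell b_\ell)^2 a_\ell$-terms through Lemma~\ref{lemma:jacobian_finite_relations}(III), and the terminal term through Lemma~\ref{lemma:jacobian_finite_relations}(IV), whose $x^2$-coefficient $h_{n-2}$ is read off from \eqref{eq:FdDimEqsIV2} — partially cancel: the terminal term absorbs the whole $k_i$-contribution together with the $t_{m+1}$-contribution, so that a naive single pass collapses the constant to exactly $1$, and only after carrying out this cancellation precisely (and tracking every sign) does one see that the surviving constant is controlled by the shifted sum $\sum_s(-1)^s t_{m+2s+1}$ plus $\sum_i(-1)^i k_i$. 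Equivalently, one may feed in Corollary~\ref{cor:0RelEven} to kill the $\ve_1^2 a_1\cdots a_{n-1}$-type terms that appear if instead the residual is pushed all the way to vertex $n$; this is the step I expect to require the most care. Finally, the short cases $(n-m-4)/2\le 0$, i.e.\ $n\in\{m,m+2,m+4\}$, in which the $t$-sum is empty and the inequality reduces to $\sum_{i=1}^m(-1)^i k_i\ne 0$, should be handled separately, the residual there being controlled by Lemma~\ref{lemma:jacobian_finite_relations}(I) and (IV) alone.
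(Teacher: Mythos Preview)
Your single-pass strategy runs aground precisely where you suspect, but the obstacle is worse than you describe. When you collect the three leading contributions --- the sum $\sum_{i=2}^m(-1)^ik_i/k_1$ coming from Lemma~\ref{lemma:jacobian_finite_relations}(I), the sum $\sum_{s=1}^{(n-m-2)/2}(-1)^{m+s}t_{m+2s-1}/k_1$ coming from (III), and the $x^2$-coefficient of $h_{n-2}$ coming from (IV) via \eqref{eq:FdDimEqsIV2} --- the $t$-parts cancel \emph{completely} (every $t_{m+2s-1}$, not only $t_{m+1}$) and the two $k$-parts combine to exactly $1$. So $K=1$ on the nose, with nothing left over; your identity becomes $G(\ve_1)\,\ve_1^3 a_1\cdots a_{n-2}=0$ with $x\mid G$, which at best controls $\ve_1^4 a_1\cdots a_{n-2}$, one power of $\ve_1$ short. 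Your claimed fix --- that after the cancellation the ``surviving constant'' is $\sum_i(-1)^ik_i+\sum_s(-1)^st_{m+2s+1}$ --- is therefore false: there is no surviving scalar at order $\ve_1^0$. The alternative of ``pushing to vertex $n$'' is likewise unavailable as you phrase it, since the chain terminates at $a_{n-2}$ and the relation \eqref{eq:jacobian_ideal_b} at $i=n-2$ introduces $a_{n-1}b_{n-1}$, not a bare $a_{n-1}$.

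The paper gets around the $K=1$ degeneracy by a genuinely different maneuver. Instead of a single expansion of $\ve_1^3a_1\cdots a_{n-2}$, it writes down the analogous expansions of $\ve_1^6a_1\cdots a_{n-2}$ and $\ve_1^4a_1\cdots a_{n-2}$ (equations \eqref{eq:0RelOdd1}, \eqref{eq:0RelOdd2}) and couples them with the auxiliary identity \eqref{eq:0RelOdd3} for the \emph{squared} terminal word $\ve_1^2 a_1\cdots a_{n-3}(a_{n-2}b_{n-2})^2a_{n-2}$. It is in this last identity that Corollary~\ref{cor:0RelEven} enters: rewriting $(a_{n-2}b_{n-2})^2a_{n-2}$ through \eqref{eq:jacobian_ideal_b} produces a factor $\ve_1^2 a_1\cdots a_{n-1}$ that vanishes, while the remaining piece carries an \emph{odd} power $\ve_1^3$ (ultimately from the odd-parity case of Lemma~\ref{lemma:jacobian_finite_relations}). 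Substituting \eqref{eq:0RelOdd2} and \eqref{eq:0RelOdd3} into \eqref{eq:0RelOdd1} then yields $\ve_1^3 a_1\cdots a_{n-2}=F(\ve_1)a_1\cdots a_{n-2}$ with $x^4\mid F$, and the conclusion follows. The missing idea in your plan is this detour through $(a_{n-2}b_{n-2})^2$ together with Corollary~\ref{cor:0RelEven}, which manufactures a fresh $\ve_1^3$ term not eaten by the $K=1$ cancellation.
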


\begin{proof}
By a similar process as in the proof of Corollary \ref{cor:0RelEven}, we can obtain:
\begin{equation}\label{eq:0RelOdd1}
\begin{aligned}
&\ve_1^6 a_1 \cdots a_{n-2} = \sum_{i=2}^m \frac{k_i}{k_1} (-1)^i \ve_1^6 a_1 \cdots a_{n-2} + \sum_{i=2}^m \frac{k_i}{k_1} (-1)^i \left(2f_i(\ve_1)+f_i(\ve_1)^2\right) \ve_1^4 a_1 \cdots a_{n-2} + \\
&\quad + \sum_{s=1}^{\frac{n-m-4}{2}} (-1)^s \frac{t_{m+2s+1}}{k_1} \ve_1^4 a_1 \cdots a_{m+2s-2} (a_{m+2s+1} b_{m+2s+1})^2 a_{m+2s+1} \cdots a_{n-2} + \\
&\quad + \frac{(-1)^{(n-m-2)/2}}{k_1} \ve_1^4 a_1 \cdots a_{n-3} a_{n-2} b_{n-2} a_{n-2}.
\end{aligned}
\end{equation}
and
\begin{equation}\label{eq:0RelOdd2}
\begin{aligned}
& \ve_1^4 a_1 \cdots a_{n-2} = \sum_{i=2}^m \frac{k_i}{k_1} (-1)^i \ve_1^4 a_1 \cdots a_{n-2} + \sum_{i=2}^m \frac{k_i}{k_1} (-1)^i \left(2f_i(\ve_1)+f_i(\ve_1)^2\right) \ve_1^2 a_1 \cdots a_{n-2} + \\
&\quad + \sum_{s=1}^{\frac{n-m-4}{2}} (-1)^s \frac{t_{m+2s+1}}{k_1} \ve_1^2 a_1 \cdots a_{m+2s-2} (a_{m+2s+1} b_{m+2s+1})^2 a_{m+2s+1} \cdots a_{n-2} + \\
&\quad + \frac{(-1)^{(n-m-2)/2}}{k_1} \ve_1^2 a_1 \cdots a_{n-3} a_{n-2} b_{n-2} a_{n-2}.
\end{aligned}
\end{equation}

Considering $\ve_1^2 a_1 \cdots a_{n-3} (a_{n-2} b_{n-2})^2 a_{n-2}$, by equation (\ref{eq:jacobian_ideal_b}), Lemma \ref{lemma:jacobian_finite_relations}, and corollary \ref{cor:0RelEven}, we have:
\begin{equation}\label{eq:0RelOdd3}
\begin{aligned}
\ve_1^2 a_1 \cdots a_{n-3} (a_{n-2} b_{n-2})^2 a_{n-2} &= (-1)^{m+1} \ve_1^3 a_1 \cdots a_{n-2} + f(\ve_1) a_1 \cdots a_{n-2},
\end{aligned}
\end{equation}
where $f(x)$ satisfies $x^4 \mid f(x)$. Substituting (\ref{eq:0RelOdd2}) and (\ref{eq:0RelOdd3}) into (\ref{eq:0RelOdd1}) and simplifying, we get:
\[
\ve_1^3 a_1 \cdots a_{n-2} = F(\ve_1) a_1 \cdots a_{n-2},
\]
where $F(x)$ satisfies $x^4 \mid F(x)$, hence $\ve_1^3 a_1 \cdots a_{n-2}$ converges to $0$ in $\widehat{\Jac}(\overline{\bA}_n^{[1,m]},W_n^{[1,m]})$.
\end{proof}
\subsection{Finite-dimensionality of $\widehat{\Jac}(\overline{\bA}_n^{[1,m]},W_n^{[1,m]})$}
we consider specific paths in the 2-cyclic quiver $Q$ and analyze their contributions within the Jacobian algebra $\widehat{\Jac}(\overline{\bA}_n^{[1,m]},W_n^{[1,m]})$. The following corollary formalizes the conditions under which certain paths must vanish in $\widehat{\Jac}(\overline{\bA}_n^{[1,m]},W_n^{[1,m]})$.

\begin{corollary}\label{cor:JacFin_1_to_N_1_to_N-1}
If a path $p$ in the 2-cyclic quiver $Q$ satisfies $s(p)=1$, $t(p)=n$, and $l(p)\geq n+3$, then $\overline{p}=0$ in $\widehat{\Jac}(\overline{\bA}_n^{[1,m]},W_n^{[1,m]})$. If $p$ satisfies $s(p)=1$, $t(p)=n-1$, and $l(p)\geq n+3$, then $\overline{p}=0$ in $\widehat{\Jac}(\overline{\bA}_n^{[1,m]},W_n^{[1,m]})$.
\end{corollary}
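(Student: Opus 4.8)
The plan is to deduce the statement from the two vanishing relations already established — Corollary~\ref{cor:0RelEven} and Corollary~\ref{cor:0RelOdd} — after rewriting an arbitrary path out of the vertex $1$ in a canonical form. I assume $m\ge1$; the case $[1,m]=\emptyset$ is entirely parallel, with the element $a_1b_1$ and Corollaries~\ref{cor:0RelEvenInLoopEmpty}--\ref{cor:0RelOddInLoopEmpty} playing the roles of $\ve_1$ and Corollaries~\ref{cor:0RelEven}--\ref{cor:0RelOdd}.

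The key claim is the following reduction statement. For every $j\in\{1,\dots,n\}$ and every path $p$ with $s(p)=1$ and $t(p)=j$, let $D$ (resp.\ $Z$) denote the number of arrows $b_i$ (resp.\ loops $\ve_i$) occurring in $p$; then there is $c_p(x)\in K[[x]]$ with $x^{\,D+Z}\mid c_p(x)$ such that $\overline{p}=\overline{\,c_p(\ve_1)\,a_1a_2\cdots a_{j-1}\,}$ in $\widehat{\Jac}(\overline{\bA}_n^{[1,m]},W_n^{[1,m]})$ (where $a_1\cdots a_{j-1}=e_1$ if $j=1$). Granting this, the corollary is a short count: reading off the $+1$, $-1$ and $0$ steps of $p$ gives $l(p)=(j-1)+2D+Z$, hence $D+Z\ge\lceil(l(p)-j+1)/2\rceil$. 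If $t(p)=n$ and $l(p)\ge n+3$, then $D+Z\ge2$, so $c_p(x)=x^2\tilde c(x)$ with $\tilde c\in K[[x]]$; since $c_p(\ve_1)$ is supported at the single vertex $1$ its factors commute, so $\overline{p}=\overline{\tilde c(\ve_1)\,\ve_1^{\,2}a_1\cdots a_{n-1}}=\overline{\tilde c(\ve_1)}\cdot\overline{\ve_1^{\,2}a_1\cdots a_{n-1}}=0$ by Corollary~\ref{cor:0RelEven}. If instead $t(p)=n-1$ and $l(p)\ge n+3$, then $D+Z\ge3$, $c_p(x)=x^3\tilde c(x)$, and $\overline{p}=\overline{\tilde c(\ve_1)}\cdot\overline{\ve_1^{\,3}a_1\cdots a_{n-2}}=0$ by Corollary~\ref{cor:0RelOdd}.

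The work is thus in the reduction statement, which I would prove by induction on $l(p)$, the case $l(p)=j-1$ being forced ($p=a_1\cdots a_{j-1}$, $D=Z=0$). For $l(p)>j-1$, let $a_1\cdots a_k$ (with $k\ge0$) be the longest ascending prefix of $p$; the arrow immediately after it is then a loop $\ve_{k+1}$ (only when $k+1\le m$) or a backward arrow $b_k$ (only when $k\ge1$). In the loop case $p=(a_1\cdots a_k\ve_{k+1})p'$, and Lemma~\ref{lemma:jacobian_finite_relations}(I) rewrites $a_1\cdots a_k\ve_{k+1}$ as $\big((-1)^k+f_{k+1}(\ve_1)\big)\ve_1\,a_1\cdots a_k$ (for $k=0$ this merely says $p=\ve_1p'$); since the coefficient is a unit in $K[[x]]$ and $a_1\cdots a_k\,p'$ is a path $1\to j$ of length $l(p)-1$ with the same $D$ and with $Z-1$ loops, the inductive hypothesis together with the extra factor $\ve_1$ gives the claim. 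In the backward case $p=a_1\cdots a_{k-1}(a_kb_k)\,p'$ with $p'\colon k\to j$: using \eqref{eq:jacobian_ideal_ve} when $k\le m$ — which replaces $a_kb_k$ by $\ve_k^2$ (turned into a factor $\ve_1^{2}$ by Lemma~\ref{lemma:jacobian_finite_relations}(I)) plus $b_{k-1}a_{k-1}$, a peak immediately followed by the re-ascending $a_{k-1}$ and hence absorbed by Lemma~\ref{lemma:jacobian_finite_relations}(II) — or Lemma~\ref{lemma:jacobian_finite_relations}(III)--(IV) together with \eqref{eq:jacobian_ideal_a}--\eqref{eq:jacobian_ideal_b} when $k>m$, one rewrites $\overline{p}$ as $\overline{(\text{a series in }\ve_1\text{ divisible by }\ve_1)\cdot(a_1\cdots a_{k-1}\,p')}$, where $a_1\cdots a_{k-1}\,p'$ is a path $1\to j$ of length $l(p)-2$ with $D-1$ backward arrows and $Z$ loops. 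The inductive hypothesis then closes the step, the bookkeeping balancing because deleting $b_k$ lowers $D$ by $1$ while the extracted power of $\ve_1$ adds at least $1$.

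The part I expect to demand the most care is, within the backward case with $k>m$, the sub-case where $p'$ does not re-ascend at once but first descends (or loops) before returning to vertex $k$: the returning piece is then not a single peak $a_kb_k$ but a deeper excursion $a_{k-\ell}\cdots a_k\,b_k\cdots b_{k-\ell}$, a shape not literally covered by Lemma~\ref{lemma:jacobian_finite_relations}. I would handle it by peeling such an excursion off one vertex at a time, from its innermost peak outward, via repeated application of \eqref{eq:jacobian_ideal_ve}--\eqref{eq:jacobian_ideal_b} and Lemma~\ref{lemma:jacobian_finite_relations}(II)--(IV) — the same type of manipulation already carried out in the proofs of Lemma~\ref{lemma:jacobian_finite_relations} and of Corollaries~\ref{cor:0RelEven}--\ref{cor:0RelOdd} — organised as a nested induction (outer on $l(p)$, inner on the depth of the current excursion), while checking that each eliminated backward arrow still produces a factor $\ve_1$ of the required order and that the successive rewritings converge $\mathfrak m$-adically, so that $c_p$ is well defined in $K[[x]]$.
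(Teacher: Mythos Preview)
Your approach is essentially the same as the paper's: both arguments rewrite an arbitrary path $p\colon 1\to j$ as $c_p(\ve_1)\,a_1\cdots a_{j-1}$ with a sufficiently high power of $\ve_1$ dividing $c_p$, and then invoke Corollaries~\ref{cor:0RelEven} and~\ref{cor:0RelOdd}. The paper's proof is extremely terse—it simply asserts that after inserting the requisite cycles one gets $p=f(\ve_1)a_1\cdots a_{n-1}$ with $x^2\mid f$ ``by Lemma~\ref{lemma:jacobian_finite_relations}''—whereas you make the reduction explicit via the $(D,Z)$ bookkeeping and an honest induction on $l(p)$. Your identification of the delicate sub-case (a backward step not immediately followed by a re-ascent, i.e.\ a deeper excursion) is exactly the point the paper glosses over; the computation you would need there is in fact carried out inside the proof of Lemma~\ref{lemma:jacobian_finite_relations} (equations~\eqref{eq:FdDimEqsIV1}--\eqref{eq:FdDimEqsIV3}), which shows $a_1\cdots a_{k-1}(a_kb_k)=(\text{series in }\ve_1)\,a_1\cdots a_{k-1}$ even without a trailing $a_k$, so your plan goes through.
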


\begin{proof}
We only provide the proof for the case \(m \geq 1\), the proof for \(m < 1\) is similar. It is known that in $Q$, any path from $1$ to $n$ must contain the arrows $a_1, \dots, a_{n-1}$. Therefore, we can deduce that if a path $p$ in $Q$ satisfies $s(p)=1$, $t(p)=n$, and $l(p)\geq n+3$, then there exist some cycles $c_i$ in $Q$ such that $p$ is of the form $a_1 \cdots a_{s(c_1)-1} c_1 a_{s(c_1)} \cdots a_{s(c_i)-1} c_i a_{s(c_i)} \cdots a_n$. Furthermore, from the structure of $Q$, we can deduce that $c_i$ are cycles of the form $a_{s(c_i)} a_{s(c_i)+1} \cdots \ve_k^u \cdots a_{k'} b_{k'} \cdots b_{s(c_i)+1} b_{s(c_i)}$, where $u\geq 0$. Since $\sum_{i \in I} l(c_i) \geq 4$, by Lemma \ref{lemma:jacobian_finite_relations}, we know that $p = f(\ve_1) a_1 \cdots a_{n-1}$ in $\widehat{\Jac}(\overline{\bA}_n^{[1,m]},W_n^{[1,m]})$, where $x^2 \mid f(x)$. Therefore, by Corollary \ref{cor:0RelEven}, we know that $p=0$. The proof of the second part of the corollary is similar.
\end{proof}

Extending our analysis of path conditions within the 2-cyclic quiver $Q$, we now check the paths that start from a vertex $d > 1$ and end at vertex $n$. The following lemma establishes that any such path greater than a specified length can be expressed as a linear combination of shorter paths within the Jacobian algebra $\widehat{\Jac}(\overline{\bA}_n^{[1,m]},W_n^{[1,m]})$. 

From the potential $W_n^{[1,m]}$ (equation \ref{eq:potential_form}) and the relations (equation \ref{eq:jacobian_ideal_ve}, \ref{eq:jacobian_ideal_a}, \ref{eq:jacobian_ideal_b}) induced by the differential $\partial_\alpha(W_n^{[1,m]})$, it follows that $\wh{\Jac}(\overline{\bA}_n^{[1,m]},W_n^{[1,m]})/\la e_1,…,e_{d-1}\ra\simeq \wh{\Jac}(\overline{\bA}_{n-d+1}^{[1,m-d+1]},W_{n-d+1}^{[1,m-d+1]})$ and $\wh{\Jac}(\overline{\bA}_n^{[1,m]},W_n^{[1,m]})/\la e_{u+1},…,e_{n}\ra\simeq \wh{\Jac}(\overline{\bA}_{n-u-1}^{[1,m]},W_{n-u-1}^{[1,m]})$.

\begin{lemma}\label{lem:JacFin_d_to_N}
If a path $p$ in the 2-cyclic quiver $Q$ satisfies $s(p)=d>1$, $t(p)=n$, and $l(p)>n+d+2$, then $p$ in $\widehat{\Jac}(\overline{\bA}_n^{[1,m]},W_n^{[1,m]})$ can be expressed linearly in terms of paths of length not greater than $n+d+2$ in $\widehat{\Jac}(\overline{\bA}_n^{[1,m]},W_n^{[1,m]})$.
\end{lemma}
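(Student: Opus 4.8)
The key idea is to use the quotient isomorphism already recorded just before the statement, namely $\wh{\Jac}(\overline{\bA}_n^{[1,m]},W_n^{[1,m]})/\la e_1,\dots,e_{d-1}\ra\simeq \wh{\Jac}(\overline{\bA}_{n-d+1}^{[1,m-d+1]},W_{n-d+1}^{[1,m-d+1]})$, to transport the vanishing results (Corollaries \ref{cor:0RelEven}, \ref{cor:0RelOdd}, and especially Corollary \ref{cor:JacFin_1_to_N_1_to_N-1}) from the ``type $\bA$ starting at vertex $1$'' situation to a path starting at vertex $d$. First I would observe that since $p$ starts at $d>1$ and ends at $n$, none of the idempotents $e_1,\dots,e_{d-1}$ kill $p$; indeed $p$ lies entirely in the full subquiver on vertices $\{d,\dots,n\}$ \emph{except} possibly for the initial portion. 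Actually more care is needed: a path from $d$ to $n$ in the 2-cyclic quiver $\overline{\bA}_n^{[1,m]}$ may wander left of $d$ (through $b_{d-1},b_{d-2},\dots$) before coming back. So the first step is a case split on whether $p$ ever visits a vertex $<d$.

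\emph{Case 1: $p$ stays in $\{d,d+1,\dots,n\}$.} Then the image $\bar p$ in the quotient $\wh{\Jac}(\overline{\bA}_n^{[1,m]},W_n^{[1,m]})/\la e_1,\dots,e_{d-1}\ra$ determines $\bar p$ in the original algebra (because $e_1,\dots,e_{d-1}$ act as zero on $\bar p$ and on every path through which we would rewrite it). Under the isomorphism with $\wh{\Jac}(\overline{\bA}_{n-d+1}^{[1,m-d+1]},W_{n-d+1}^{[1,m-d+1]})$, the path $p$ becomes a path $p'$ from vertex $1$ to vertex $n-d+1$ of the same length $l(p)=l(p')>n+d+2$. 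If $m-d+1\geq 1$ this is a $\overline{\bA}_{N}^{[1,m']}$ with $N=n-d+1$; since $l(p')>n+d+2\ge (n-d+1)+3=N+3$ for $d\ge 1$ (indeed $n+d+2\ge N+3 \iff 2d\ge 2$), Corollary \ref{cor:JacFin_1_to_N_1_to_N-1} applies and gives $\bar{p'}=0$, hence $\bar p=0$, which is certainly a linear combination of shorter paths. If $m-d+1<1$, i.e.\ $d>m$, one instead uses Corollaries \ref{cor:0RelOddInLoopEmpty} and \ref{cor:0RelEvenInLoopEmpty} together with the parity bookkeeping exactly as in Corollary \ref{cor:JacFin_1_to_N_1_to_N-1}; the length bound is the same.

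\emph{Case 2: $p$ visits some vertex $<d$.} Let $d'<d$ be the smallest vertex $p$ visits. Then $p$ factors as $p=q\,a_{d'}a_{d'+1}\cdots a_{d-1}$ where $q$ is a path from $d$ to $d'$ (it must end by traversing $a_{d'},\dots,a_{d-1}$ to return to the right of $d'$ — wait, more carefully: $p$ enters vertex $d'$, and since $d'$ is minimal, to eventually reach $n$ the path must leave $d'$ rightward and never return to $d'$; so $p=r\, s$ where $r$ is the initial segment from $d$ down to $d'$ and $s$ is a path from $d'$ to $n$ that stays in $\{d',\dots,n\}$). Now apply Case 1's argument to $s$: if $l(s)>n+d'+2$ then $\bar s=0$ and hence $\bar p=0$. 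If $l(s)\le n+d'+2$, then since $l(r)\ge 2(d-d')$ (going down from $d$ to $d'$ and this contributes at least the arrows $b_{d-1},\dots,b_{d'}$, and returning is folded into $s$), actually the bound $l(p)=l(r)+l(s)>n+d+2$ forces $l(r)>d-d'$... I would instead argue directly: rewrite the initial descent $r$ using relations \eqref{eq:jacobian_ideal_a}, \eqref{eq:jacobian_ideal_b} to pull it past $a_{d'}\cdots a_{d-1}$ and reduce to a combination of paths from $d$ to $n$ staying in $\{d,\dots,n\}$ plus $\ve$-decorated versions, each strictly shorter or handled by Case 1.

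\emph{Main obstacle.} The genuinely delicate point is Case 2: controlling the ``leftward excursions'' of $p$ below vertex $d$ without blowing up the length bound. The clean way is to first prove the auxiliary claim that \emph{any} path from $d$ to $n$ equals, modulo the Jacobian ideal, a linear combination of paths from $d$ to $n$ that never go left of $d$ (using the relations $\partial_{a_{d-1}}W, \partial_{b_{d-1}}W$ to ``trade'' a detour $\cdots b_{d-1}a_{d-1}\cdots$ for $\ve$-powers and strictly shorter detours, terminating by the $\mathfrak m$-adic convergence already exploited in Corollaries \ref{cor:0RelEven}--\ref{cor:0RelOdd}). Once every path from $d$ to $n$ is reduced to the ``no leftward excursion'' form, Case 1 finishes the argument, and the length bound $n+d+2$ comes out as $(n-d+1)+3$ plus the accounting for the $a_{d-1},\dots$ reindexing. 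I would double-check the exact exponent $n+d+2$ by tracking the length shift $l(p')=l(p)$ under the quotient isomorphism and the inequality $n+d+2\ge (n-d+1)+3$, which holds precisely for $d\ge 1$, with equality impossible for $d>1$, so strictness of the hypothesis $l(p)>n+d+2$ is exactly what feeds into Corollary \ref{cor:JacFin_1_to_N_1_to_N-1}'s hypothesis $l(p')\ge N+3$.
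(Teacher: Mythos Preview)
Your Case~1 contains a genuine gap. You claim that when $p$ stays in $\{d,\dots,n\}$, the vanishing of $\pi(\bar p)$ in the quotient forces $\bar p=0$ in $\wh{\Jac}(\overline{\bA}_n^{[1,m]},W_n^{[1,m]})$, justifying this by saying that ``$e_1,\dots,e_{d-1}$ act as zero on every path through which we would rewrite it''. This is false: the relations $\partial_{\ve_d}W$ (for $d\le m$) and $\partial_{a_d}W$, $\partial_{b_d}W$ (for $d>m$) each contain a term $b_{d-1}a_{d-1}$, so rewriting a path supported on $\{d,\dots,n\}$ in the original algebra \emph{does} introduce paths through vertex $d-1$. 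Equivalently, the quotient carries strictly stronger relations among paths on $\{d,\dots,n\}$: for instance when $d>m$ one has $t_d\,a_d(b_da_d)^2+a_da_{d+1}b_{d+1}=-b_{d-1}a_{d-1}a_d$ in the original, whereas the left side already vanishes in the quotient. Hence $\pi(\bar p)=0$ tells you only that $\bar p\in\langle e_1,\dots,e_{d-1}\rangle$, not that $\bar p=0$. Your auxiliary claim would therefore not finish the argument either, since it feeds back into this broken Case~1.

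The paper does not split into cases. From $\pi(\bar p)=0$ it concludes precisely that $\bar p\in\langle e_1,\dots,e_{d-1}\rangle$, i.e.\ $\bar p$ is (in the Jacobian algebra) a linear combination of paths $p_1p_1'$ with $p_1:d\to d_1$ and $p_1':d_1\to n$ for some $d_1<d$. One then applies the same quotient argument to each $p_1'$ (with $d_1$ in place of $d$) and recurses downward in the starting vertex; the recursion bottoms out at vertex~$1$, where Corollary~\ref{cor:JacFin_1_to_N_1_to_N-1} gives vanishing outright. At each step one arranges $l(p_1)=d-d_1$, so the accumulated prefix has length at most $d-1$ and the final segment from $1$ to $n$ has length at most $n+3$, yielding the bound $n+d+2$. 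Your Case~2 and the auxiliary claim are groping toward this descent, but the point you are missing is that the quotient map \emph{itself} produces the factorisation through a lower vertex; one never needs to argue separately that leftward excursions can be eliminated, nor that paths staying to the right of $d$ vanish on their own.
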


\begin{proof}
Let $s(p)=m>1$. Consider the algebra homomorphism $\pi: \widehat{\Jac}(\overline{\bA}_n^{[1,m]},W_n^{[1,m]}) \rightarrow \widehat{\Jac}(\overline{\bA}_n^{[1,m]},W_n^{[1,m]}) / \langle e_1, \dots, e_{d-1} \rangle$. By corollary \ref{cor:JacFin_1_to_N_1_to_N-1} and the $[1,m]$-finite dimensional condition, we know that $\pi(p) = 0$ in $\widehat{\Jac}(\overline{\bA}_n^{[1,m]},W_n^{[1,m]}) / \langle e_1, \dots, e_{d-1} \rangle$. Therefore, in $\widehat{\Jac}(\overline{\bA}_n^{[1,m]},W_n^{[1,m]})$, either $p = 0$, or there exist $p_1, p_1'$ such that $s(p_1)=d$, $t(p_1)=s(p_1')=d_1<d$, $t(p_1')=n$, and $p = p_1 p_1'$. If $p = 0$, the lemma is proved; if $p \neq 0$, and if $l(p_1 p_1') \leq n+d+2$, the lemma is proved; if $l(p_1 p_1') > n+d+2$, we can choose appropriate $p_1'$ and $d_1 < d$ such that $l(p_1') > n+d_1+3$, $l(p_1) = d-d_1$. By repeatedly applying the same process to $p_1'$, we obtain that $p = 0$ or $p = p_1 \cdots p_h$, where $p_h$ satisfies $s(p_h)=1$, $p_1 \cdots p_{h-1}$ satisfies $l(p_1 \cdots p_{h-1})=d$. If $l(p_1) > n+3$, by corollary \ref{cor:JacFin_1_to_N_1_to_N-1}, we obtain that $p=0$; if $l(p_1) \leq n+3$, then $l(p_1 \cdots p_h) \leq n+d+2$. Thus, the lemma is proved.
\end{proof}

\begin{lemma}\label{lem:JacFin_1_to_u}
If a path $p$ in the 2-cyclic quiver $Q$ satisfies $s(p)=1$, $t(p)=u<n$, and $l(p)>2n-u+2$, then $\overline{p}$ in $\widehat{\Jac}(\overline{\bA}_n^{[1,m]},W_n^{[1,m]})$ can be expressed linearly in terms of paths of length not greater than $n+u+2$ in $\widehat{\Jac}(\overline{\bA}_n^{[1,m]},W_n^{[1,m]})$.
\end{lemma}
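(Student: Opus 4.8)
The plan is to imitate the proof of Lemma \ref{lem:JacFin_d_to_N}, but ``from the top'' rather than ``from the bottom'': instead of projecting away the vertices $1,\dots,d-1$ and peeling a descending tail off the front of the path, we project away the vertices lying strictly above $u$ and peel off the initial piece of $p$ that carries it up to the highest vertex it visits, controlling that piece by Corollary \ref{cor:JacFin_1_to_N_1_to_N-1} applied inside the corresponding quotient.

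\textbf{Setting up the projection.} A first subtlety is that the quotient must land on a quiver with potential of $\overline{\bA}$-type for which an inequality of the form \eqref{eq:finite_dimensional_condition} still holds. Since $n\equiv m\pmod 2$, set $v:=u$ if $u\equiv n\pmod 2$ and $v:=u+1$ otherwise, so that $v\equiv m\pmod 2$. Consider the algebra homomorphism
\[
\pi:\widehat{\Jac}(\overline{\bA}_n^{[1,m]},W_n^{[1,m]})\longrightarrow \widehat{\Jac}(\overline{\bA}_n^{[1,m]},W_n^{[1,m]})/\langle e_{v+1},\dots,e_n\rangle,
\]
whose target, by the isomorphism recorded just before Lemma \ref{lem:JacFin_d_to_N}, is the Jacobian algebra of the restriction of $(\overline{\bA}_n^{[1,m]},W_n^{[1,m]})$ to the vertices $1,\dots,v$. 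Under $\pi$ the class $\overline p$ becomes the class of a path from $1$ to $u\in\{v-1,v\}$; because $u<n$ forces $l(p)>2n-u+2\ge v+3$, the appropriate half of Corollary \ref{cor:JacFin_1_to_N_1_to_N-1} (the ``$1\to v$'' statement when $v=u$, the ``$1\to v-1$'' statement when $v=u+1$), applied inside this restricted Jacobian algebra, yields $\pi(\overline p)=0$.

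\textbf{Peeling and iterating.} As in Lemma \ref{lem:JacFin_d_to_N}, the vanishing of $\pi(\overline p)$ forces that in $\widehat{\Jac}(\overline{\bA}_n^{[1,m]},W_n^{[1,m]})$ either $\overline p=0$ (and we are finished) or $p$ genuinely passes through some vertex $w>v$; taking $w$ to be the highest vertex $p$ visits and cutting at the \emph{last} visit to $w$ writes $p=q\,q'$ with $s(q)=1$, $t(q)=s(q')=w$, $t(q')=u$, where $q$ stays in $\{1,\dots,w\}$ and $q'$ stays in $\{1,\dots,w-1\}$. Applying Corollary \ref{cor:JacFin_1_to_N_1_to_N-1} inside $\widehat{\Jac}$ of the restriction to $\{1,\dots,w\}$: if $l(q)\ge w+3$ then $\overline q=0$ and hence $\overline p=0$; otherwise $q$ is short (length $\le w+2\le n+2$), and since $w$ was maximal the first step of $q'$ is $b_{w-1}$, so $q'=b_{w-1}q''$ with $q'':w-1\to u$ a strictly shorter path whose highest vertex is $<w$. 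Repeating the argument on $q''$, the highest vertex strictly decreases, so after at most $n-u$ rounds the process stops; tracking the lengths of the short initial pieces peeled off against the budget $l(p)>2n-u+2$ then shows every surviving term has length at most $n+u+2$.

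\textbf{Main obstacle.} I expect the decisive difficulty to lie in the parity/finite-dimensionality bookkeeping: one must check that the restrictions of $(\overline{\bA}_n^{[1,m]},W_n^{[1,m]})$ to the windows $\{1,\dots,v\}$, $\{1,\dots,w\}$, and the further windows produced by the iteration genuinely satisfy an inequality of type \eqref{eq:finite_dimensional_condition}, so that Corollary \ref{cor:JacFin_1_to_N_1_to_N-1} (and behind it Corollaries \ref{cor:0RelEven}--\ref{cor:0RelOdd}) are legitimately available there — this amounts to verifying that the left-hand side of \eqref{eq:finite_dimensional_condition} for a subinterval is an honest truncation of the one for $W_n^{[1,m]}$, plus a separate treatment of the regime $m>u$ in which every vertex of the restricted quiver carries a loop, and in the rounds where the current top vertex has the ``wrong'' parity one has to absorb one extra arrow $a_\bullet$ or $b_\bullet$ into the path before cutting. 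The accompanying, purely computational obstacle is the length accounting in the iteration, which is where the precise numerology — hypothesis $l(p)>2n-u+2$ against conclusion ``length $\le n+u+2$'' — gets pinned down.
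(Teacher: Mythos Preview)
Your setup --- the parity-adjusted cutoff $v$, the projection $\pi$, and the first appeal to Corollary~\ref{cor:JacFin_1_to_N_1_to_N-1} in the restricted algebra --- matches the paper's exactly. The gap is in the iteration step, and it is not the obstacle you flagged.

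The paper does \emph{not} bound the initial piece and recurse on the tail; it does the reverse. From $\pi(p)=0$ one writes $p=p_1'\,p_1$ where $p_1=b_{u_1-1}\cdots b_u$ is the \emph{final} direct descent (so $l(p_1)=u_1-u$) and $p_1':1\to u_1$ is everything preceding it. The crucial feature is that $p_1'$ still starts at vertex $1$, so the same projection-plus-Corollary~\ref{cor:JacFin_1_to_N_1_to_N-1} argument applies to $p_1'$ verbatim, now with the larger target $u_1$. One iterates, $u<u_1<u_2<\cdots$, until the target reaches $n$ or $n-1$, at which point Corollary~\ref{cor:JacFin_1_to_N_1_to_N-1} applies in the full algebra and gives $l(p_h)\le n+2$; combined with $l(p_{h-1}\cdots p_1)=n-u$, this yields the claimed bound.

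Your scheme instead bounds $q:1\to w$ and proposes to ``repeat the argument on $q''$'' with $q'':w-1\to u$. But $q''$ no longer starts at vertex $1$, and Corollary~\ref{cor:JacFin_1_to_N_1_to_N-1} only controls paths with source $1$ (its proof rests on Lemma~\ref{lemma:jacobian_finite_relations} and Corollaries~\ref{cor:0RelEven}--\ref{cor:0RelOdd}, all of which are anchored at vertex $1$). Concretely: since $q''$ begins at $w-1$, its highest vertex is $w-1$, so in your second round the ``initial piece'' to be bounded is a cycle based at $w-1$ inside $\{1,\dots,w-1\}$, and nothing proved so far gives a length bound on such a cycle. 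The decisive difficulty is therefore not the parity/finite-dimensionality bookkeeping but the loss of the anchor at vertex $1$; the paper's choice of which end to peel is precisely what preserves it throughout the induction.
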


\begin{proof}
Let $u' \geq u$ and satisfy $u' = u$ when $u \equiv m \pmod{2}$, $u' = u+1$ when $u+1 \equiv m \pmod{2}$. Consider the algebra homomorphism $\pi: \widehat{\Jac}(\overline{\bA}_n^{[1,m]},W_n^{[1,m]}) \rightarrow \widehat{\Jac}(\overline{\bA}_n^{[1,m]},W_n^{[1,m]}) / \langle e_{u'}, \dots, e_n \rangle$. By Corollary \ref{cor:JacFin_1_to_N_1_to_N-1} and the $[1,m]$-finite dimensional condition, we know that $\pi(p) = 0$ in $\widehat{\Jac}(\overline{\bA}_n^{[1,m]},W_n^{[1,m]}) / \langle e_{u'}, \dots, e_n \rangle$. Therefore, in $\widehat{\Jac}(\overline{\bA}_n^{[1,m]},W_n^{[1,m]})$, either $p = 0$, or there exist $p_1', p_1$ such that $t(p_1)=u$, $t(p_1')=s(p_1)=u_1>u$, $s(p_1')=1$, and $p = p_1' p_1$. If $p = 0$, the lemma is proved; if $p \neq 0$, and if $l(p_1' p_1) \leq 2n-u+2$, the lemma is proved; if $l(p_1' p_1) > 2n-u+2$, we can choose appropriate $p_1'$ and $u_1 > u$ such that $l(p_1') > 2n-u_1+2$, $l(p_1)=u_1-u$. By repeatedly applying the same process to $p_1'$, we eventually obtain $p = 0$ or $p = p_h \cdots p_1$, where $p_h$ satisfies $t(p_h)=n$ or $n-1$, $p_{h-1} \cdots p_1$ satisfies $l(p_{h-1} \cdots p_1)=n-u$. If $l(p_h) > n+2$, by Corollary \ref{cor:JacFin_1_to_N_1_to_N-1}, we know $p=0$; if $l(p_1) \leq n+2$, then $l(p_1 \cdots p_h) \leq 2n-u+2$. Thus, the lemma is proved.
\end{proof}

By lemma \ref{lem:JacFin_d_to_N} and lemma \ref{lem:JacFin_1_to_u}, we further obtain:
\begin{corollary}\label{cor:JacFin_d_to_u}
If a path $p$ in the 2-cyclic quiver $Q$ satisfies $s(p)=d>1$, $t(p)=u<N$, and $l(p)>N+u+d$, then $\overline{p}$ in $\widehat{\Jac}(\overline{\bA}_n^{[1,m]},W_n^{[1,m]})$ can be expressed linearly in terms of paths of length less than $N+u+d$ in $\widehat{\Jac}(\overline{\bA}_n^{[1,m]},W_n^{[1,m]})$.
\end{corollary}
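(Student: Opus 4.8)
The plan is to push the two interior endpoints of $p$ outward — the source $d$ towards vertex $1$ and the target $u$ towards vertex $n$ — so as to reduce to Lemmas \ref{lem:JacFin_d_to_N} and \ref{lem:JacFin_1_to_u}, and through them to Corollary \ref{cor:JacFin_1_to_N_1_to_N-1}, all via the two quotient isomorphisms recalled just before Lemma \ref{lem:JacFin_d_to_N}. The argument is an induction on the pair $(d,\,n-u)$ (I write $n$ for the $N$ of the statement). Let $p$ satisfy $s(p)=d>1$, $t(p)=u<n$, $l(p)>n+u+d$.

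If $u\ge d$, I would first pass to $\pi\colon\widehat{\Jac}(\overline{\bA}_n^{[1,m]},W_n^{[1,m]})\twoheadrightarrow\widehat{\Jac}(\overline{\bA}_n^{[1,m]},W_n^{[1,m]})/\langle e_1,\dots,e_{d-1}\rangle\cong\widehat{\Jac}(\overline{\bA}_{n-d+1}^{[1,m-d+1]},W_{n-d+1}^{[1,m-d+1]})$; relabelling vertex $i$ as $i-d+1$, the image of $p$ is a path $1\to u-d+1<n-d+1$ of length $l(p)$, and $l(p)>n+u+d$ exceeds the relevant thresholds there, so Lemma \ref{lem:JacFin_1_to_u} (applied in the smaller algebra — if need be after first running the target-side step below inside that algebra) rewrites $\pi(p)$ as a combination of paths of length $\le n+u-2d+4<n+u+d$. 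I would then lift this relation back, so that $p$ equals such a combination of paths supported on $\{d,\dots,n\}$ plus an element of the ideal $\langle e_1,\dots,e_{d-1}\rangle$; the latter is a topological combination of paths of the form $p_1p_1'$ with $t(p_1)=s(p_1')=d'<d$, and I would apply the inductive hypothesis to each second factor $p_1'$ — a path $d'\to u$, with $l(p_1)=d-d'$ arranged so that $l(p_1')>n+u+d'$ — using the strictly smaller source index $d'$, or Lemma \ref{lem:JacFin_1_to_u} and the $[1,m]$-finite dimensional condition when $d'=1$. When $u<d$, or when $l(p)$ is too small for Lemma \ref{lem:JacFin_1_to_u} to apply in the above quotient (which, given $l(p)>n+u+d$, can only happen when $u+d$ is small), I would run the mirror-image argument with the quotient $\widehat{\Jac}(\overline{\bA}_n^{[1,m]},W_n^{[1,m]})/\langle e_{u+1},\dots,e_n\rangle$ — a Jacobian algebra of the same type on $\{1,\dots,u\}$ — using Lemma \ref{lem:JacFin_d_to_N} in place of Lemma \ref{lem:JacFin_1_to_u} and decreasing $n-u$ in place of $d$. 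This is exactly the bookkeeping already carried out inside the proofs of those two lemmas. The handful of genuinely extreme cases ($d=n$, $u=1$, and the like) I would first simplify by peeling off the arrow forced at that endpoint and, when parities get in the way, by the $u'$-shift device from the proof of Lemma \ref{lem:JacFin_1_to_u}.

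The routine part is the bundle of numerical inequalities ensuring that every path produced has length $<n+u+d$ — all of which boil down to $d\ge2$, $u\le n-2$ and $n\ge3$ — together with checking that the induction on $(d,n-u)$ terminates. The hard part, just as for Lemmas \ref{lem:JacFin_d_to_N} and \ref{lem:JacFin_1_to_u}, will be controlling the error term after each lift: a priori one only knows it lies in an ideal generated by trivial paths at extreme vertices, and one must argue that each path occurring in it is either already short enough or factors through a strictly more extreme vertex, with the length of the two factors split so that the inductive hypothesis genuinely applies.
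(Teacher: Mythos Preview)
Your approach is correct and matches the paper's: the paper gives no proof of this corollary at all, merely stating that it follows from Lemmas~\ref{lem:JacFin_d_to_N} and~\ref{lem:JacFin_1_to_u}, and your proposal is precisely the natural way to combine those two arguments --- pushing the source toward $1$ and the target toward $n$ via the two quotient isomorphisms, with the same inductive bookkeeping already used inside those lemmas. Your hedge about needing both the source-side and target-side reductions (since the threshold $2n-d-u+3$ for Lemma~\ref{lem:JacFin_1_to_u} in the quotient is not automatically implied by $l(p)>n+u+d$) is well taken, and the double quotient down to $\overline{\bA}_{u-d+1}$ or $\overline{\bA}_{u-d+2}$ (where Corollary~\ref{cor:JacFin_1_to_N_1_to_N-1} applies directly) resolves it.
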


Let $P_{\leq 2N}$ denote the set of all paths in $Q$ of length not greater than $2N$. From the above results, we can obtain:
\begin{corollary}\label{cor:JacFin}
All elements in the completed Jacobian algebra $\widehat{\Jac}(\overline{\bA}_n^{[1,m]},W_n^{[1,m]})$ corresponding to the 2-cyclic quiver with potential $(\overline{\bA}_n^{[1,m]},W_n^{[1,m]})$ can be expressed linearly in terms of the elements in $P_{\leq 2N}$ in $\widehat{\Jac}(\overline{\bA}_n^{[1,m]},W_n^{[1,m]})$. Furthermore, $P_{\leq 2N}$ is a finite set, $\widehat{\Jac}(\overline{\bA}_n^{[1,m]},W_n^{[1,m]})$ is a finite-dimensional algebra.
\end{corollary}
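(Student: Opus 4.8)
The plan is to show that the image of the ordinary path algebra $KQ$ inside $\widehat{\Jac}(\overline{\bA}_n^{[1,m]},W_n^{[1,m]})$ is already spanned by the (finitely many) classes of paths in $P_{\leq 2N}$, and then to deduce — using that the completed Jacobian algebra is a complete, Hausdorff topological algebra — that this finite-dimensional subspace is the whole algebra. Recall that by definition $\widehat{\Jac}(Q,W)=\widehat{KQ}/J$ with $J=\overline{\langle \partial_\alpha W\rangle}_{\alpha\in Q_1}$ a \emph{closed} ideal, so $\widehat{\Jac}$ carries the quotient ($\mathfrak{m}$-adic) topology, is Hausdorff and complete, and the image of $KQ$ is dense in it.

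First I would fix a path $p$ in $Q$ and show that its class $\overline p$ lies in $V:=\operatorname{span}_K\{\,\overline q \mid q\in P_{\leq 2N}\,\}$, by a case analysis on the pair $(s(p),t(p))$. If $s(p)=1$ and $t(p)\in\{n-1,n\}$, then either $\ell(p)\leq 2N$ and $\overline p\in V$ trivially, or $\ell(p)$ is large and $\overline p=0$ by Corollary \ref{cor:JacFin_1_to_N_1_to_N-1}. If $s(p)=d>1$ and $t(p)=n$, Lemma \ref{lem:JacFin_d_to_N} rewrites $\overline p$ as a $K$-linear combination of classes of paths of bounded length. If $s(p)=1$ and $t(p)=u<n$, Lemma \ref{lem:JacFin_1_to_u} does the same; and if $s(p)=d>1$, $t(p)=u<n$, Corollary \ref{cor:JacFin_d_to_u} applies. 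These four cases exhaust all source–target pairs, and (under the standing $[1,m]$-finite dimensional condition, which feeds into the zero-relations used inside those statements) the maximum of the resulting length bounds over the finitely many pairs is the uniform bound recorded as $2N$ in the statement, so every such reduction lands in $V$. Since $Q$ is a finite quiver, $P_{\leq 2N}=\bigcup_{k=0}^{2N}\{\text{paths of length }k\}$ is a finite set, so $\dim_K V\leq |P_{\leq 2N}|<\infty$; and since $KQ$ is spanned by paths, the image of $KQ$ in $\widehat{\Jac}$ equals $V$.

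To finish I would argue that $V=\widehat{\Jac}$. Being finite-dimensional over $K$ and sitting inside the Hausdorff topological algebra $\widehat{\Jac}$, the subspace $V$ is closed; being the image of the dense subset $KQ$ under the continuous quotient map, $V$ is dense. A closed dense subspace of a Hausdorff space is the whole space, so $\widehat{\Jac}(\overline{\bA}_n^{[1,m]},W_n^{[1,m]})=V$. In particular every element is a $K$-linear combination of the classes of paths in $P_{\leq 2N}$, and $\widehat{\Jac}$ is finite-dimensional.

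The main obstacle is exactly this last passage: the earlier lemmas concern \emph{finite} linear combinations of paths, whereas a general element of the completed Jacobian algebra is an infinite series, so one cannot simply re-expand it term by term and expect the result to stay a finite combination. The clean way around this is the topological argument above — realizing $V$ as a finite-dimensional (hence closed) dense subspace — rather than any direct manipulation of infinite sums; equivalently, one notes that every element of $\widehat{\Jac}$ is a limit of partial sums coming from $KQ$, each of which maps into the closed set $V$. The remaining work is bookkeeping: checking that Corollary \ref{cor:JacFin_1_to_N_1_to_N-1} together with Lemmas \ref{lem:JacFin_d_to_N}, \ref{lem:JacFin_1_to_u} and Corollary \ref{cor:JacFin_d_to_u} really do cover every pair $(s(p),t(p))$, and that their bounds on path length are all $\leq 2N$.
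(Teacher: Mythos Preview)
Your proposal is correct and is essentially the natural elaboration of what the paper leaves implicit: in the paper this corollary is stated without proof, simply as a consequence of Corollary~\ref{cor:JacFin_1_to_N_1_to_N-1}, Lemmas~\ref{lem:JacFin_d_to_N}, \ref{lem:JacFin_1_to_u} and Corollary~\ref{cor:JacFin_d_to_u}. Your case analysis on $(s(p),t(p))$ is exactly how those four results partition the problem, and the passage from ``every path lies in $V$'' to ``the completed algebra equals $V$'' via the closed--dense argument is the standard (and correct) way to bridge the gap the paper does not address explicitly; indeed, a finite-dimensional subspace of a Hausdorff topological $K$-vector space is automatically closed, and the image of $KQ$ is dense because $J$ is closed.

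One small bookkeeping point: the individual length bounds in the preceding statements (e.g.\ $n+d+2$ in Lemma~\ref{lem:JacFin_d_to_N}, $2n-u+2$ in Lemma~\ref{lem:JacFin_1_to_u}) do not all coincide literally with $2N$, and the paper is somewhat loose about the exact constant. Your phrasing already acknowledges this (``the maximum of the resulting length bounds \ldots\ is the uniform bound recorded as $2N$''), which is the right attitude; just be aware that if you want a sharp statement you may need $2N$ to stand for $2n+O(1)$ rather than exactly $2n$.
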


\section{Applications}\label{sec: Application} 
\subsection{Jacobian-finite Potentials for $\bZ_3$-covers of the 2-cyclic Quivers with Potentials}\label{sec:Jacobian-finite_Potentials_for_Z3-covers_of_the_2-cyclic_Quivers_with_Potentials}

In this section, we define a Jacobian-finite potential for a class of quivers without loops and 2-cycles. These quivers with potentials are induced from 2-cyclic quiver with potential $(\overline{\bA_n}^{[1,m]},W_n^m)$ satisfied the $[1,m]$-finite condition through the covering theory. Next, we provide an introduction to $G$-covering of quivers with potential for any finite group $G$.

\subsubsection{$G$-Coverings of quivers with potentials} 

In this section, we always assume that $G$ is a finite group. If a map $\vf:Q\rightarrow Q'$ satisfies $s(\vf(a))=\vf(s(a))$ and $t(\vf(a))=\vf(t(a))$ for any $a\in Q^1$, then we call $\vf$ a homomorphism from $Q$ to $Q'$. If $\vf$ is an isomorphism, then we call $\vf$ an isomorphism from $Q$ to $Q'$. Let $\vf$ be an automorphism on $Q$. Clearly, for any cycle $c$ in $Q$, we have $\vf(c)=c$. We call $\vf$ an automorphism of the quiver with potential $(Q,W)$ if for any summand $\lambda p$ in $W$, $\lambda\vf(p)$ is also a summand in $W$. Let $G$ be an automorphism group on $(Q,W)$. If for any $\vf\neq 1 \in G$, $\vf(a)\neq a$ for all $a\in Q^0$, then we say that $G$ acts admissibly on $(Q,W)$. Clearly, when $G$ is \textbf{admissible} on $(Q,W)$, we have $|G|\mid|Q^0|$ and $|G|\mid|Q^1|$. Furthermore, let $W=\sum \lambda_p p$. When $G$ is admissible, we can construct a new quiver with potential $(Q_G,W_G)$ from $(Q,W)$, where:
\[
(Q_G)^0=\{Gi|i\in Q^0\}, \quad (Q_G)^1=\{Ga|a\in Q^1 \},
\]
\[
W_G=\sum |G|\lambda_p Gp.
\]

\begin{lemma}\cite{PS18}
When $G$ acts admissibly on $(Q,W)$, the elements of $G$ induce automorphisms on $\wh \Jac(Q,W)$. Hence, $G$ is also an automorphism group on $\wh \Jac(Q,W)$.
\end{lemma}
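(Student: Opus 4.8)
The plan is to show directly that each $\varphi \in G$ induces a well-defined algebra automorphism of $\wh\Jac(Q,W) = \wh{KQ}/\overline{\langle \partial_a W\rangle_{a\in Q_1}}$, and that this assignment is a group homomorphism $G \to \Aut(\wh\Jac(Q,W))$. First I would observe that an automorphism $\varphi$ of the quiver $Q$ extends uniquely to a continuous $K$-algebra automorphism of the completed path algebra $\wh{KQ}$: it permutes the vertices and arrows, hence permutes paths of each fixed length, so it respects the $\mathfrak{m}$-adic filtration and is continuous; its inverse is induced by $\varphi^{-1}$, so it is bijective. Denote this extension again by $\varphi$.

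Next I would check that $\varphi$ carries the Jacobian ideal to itself. The key computational point is compatibility of $\varphi$ with cyclic derivatives: for any arrow $a$ and any cyclic word $p$, one has $\varphi\big(\partial_a(p)\big) = \partial_{\varphi(a)}\big(\varphi(p)\big)$, which follows immediately from the definition $\partial_a(uav) = vu$ together with the fact that $\varphi$ is an algebra map permuting arrows. Since $\varphi$ is an automorphism of the quiver with potential, $\varphi(W)$ is cyclically equivalent to $W$ (indeed the hypothesis "$\lambda\varphi(p)$ is a summand of $W$ whenever $\lambda p$ is" says $\varphi$ merely permutes the terms of $W$ up to the cyclic identification), so $\partial_a(\varphi(W)) = \partial_a(W)$ for every arrow $a$. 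Combining these, $\varphi(\partial_a W) = \partial_{\varphi(a)}(\varphi(W)) = \partial_{\varphi(a)}(W)$, which lies in $\langle \partial_b W\rangle_{b\in Q_1}$. Hence $\varphi$ sends the generating set $\{\partial_a W\}$ into the ideal $\langle \partial_b W\rangle_b$, so $\varphi\big(\langle \partial_a W\rangle_a\big) \subseteq \langle \partial_b W\rangle_b$; applying the same argument to $\varphi^{-1}$ gives equality, and by continuity of $\varphi$ the closures also agree: $\varphi\big(\overline{\langle \partial_a W\rangle_a}\big) = \overline{\langle \partial_b W\rangle_b}$.

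Therefore $\varphi$ descends to a well-defined continuous $K$-algebra automorphism $\bar\varphi$ of the quotient $\wh\Jac(Q,W)$. Finally, since the extension $Q\text{-automorphism} \mapsto \wh{KQ}\text{-automorphism}$ is functorial (the extension of $\varphi\psi$ is the composite of the extensions, as both agree on vertices and arrows which generate topologically), passing to the quotient gives $\overline{\varphi\psi} = \bar\varphi\,\bar\psi$, so $G \to \Aut(\wh\Jac(Q,W))$, $\varphi \mapsto \bar\varphi$, is a group homomorphism; it is injective because $G$ acts admissibly, hence faithfully, on $Q_0$ and the vertices give orthogonal idempotents $e_i$ in $\wh\Jac(Q,W)$ that $\bar\varphi$ permutes exactly as $\varphi$ permutes the $i$. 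This realizes $G$ as an automorphism group of $\wh\Jac(Q,W)$.

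The only mildly delicate point is the interaction between $\varphi$ and the topological closure in the definition of the Jacobian ideal — i.e. ensuring that "$\varphi$ of the closure equals the closure of $\varphi$ of the ideal." This is handled by the continuity of $\varphi$ and $\varphi^{-1}$ established in the first step (a homeomorphism takes closures to closures), so there is no real obstacle; the rest is a routine unwinding of definitions, and one may simply cite \cite{PS18} for the statement as the excerpt already does.
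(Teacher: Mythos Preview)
Your argument is correct: the extension of a quiver automorphism to $\wh{KQ}$, the identity $\varphi(\partial_a p)=\partial_{\varphi(a)}(\varphi(p))$, and the observation that $\varphi(W)=W$ in the cyclic quotient together force $\varphi$ to preserve the (closed) Jacobian ideal, so it descends; functoriality then gives the group action, and admissibility gives faithfulness via the primitive idempotents $e_i$. There is nothing to compare against in the paper itself, however: the lemma is stated with a citation to \cite{PS18} and no proof is given, so your write-up simply supplies the routine verification that the paper chose to outsource.
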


Next, we introduce covering functors.

\begin{definition}\label{def:CoveringFunc}
Let $\cA$ and $\cB$ be two $K$-categories, and let $G$ be an automorphism group on $\cA$. We call a functor $F:\cA\rightarrow\cB$ a $G$-precovering functor if for any $a,b\in \cA$, there are natural equivalences:
\[
\bigoplus_{g\in G}\Hom_\cA(a,gb)\simeq\Hom_{\cB}(Fa,Fb),
\]
\[
\bigoplus_{g\in G}\Hom_\cA(ga,b)\simeq\Hom_{\cB}(Fa,Fb).
\]
If the functor $F$ is dense, then we call $F$ a $G$-covering functor.
\end{definition}

Clearly, when $G$ acts admissibly on $(Q,W)$, the map $\pi:Q\rightarrow Q_G, \pi(a)=Ga$ is a $G$-covering functor. Starting from $\pi$, we can also induce the $G$-covering functor $\pi:\wh{KQ}\rightarrow \wh{KQ_G}$. Furthermore, Proposition 3.1 and Lemma 3.2 in \cite{PS18} proved:

\begin{proposition}\label{prop:JacCover}\cite{PS18}
The $G$-covering functor $\pi:Q\rightarrow Q_G$ induces the $G$-covering functor:
\[
\pi:\wh \Jac(Q,W)\rightarrow \wh \Jac(Q_G,W_G),
\]
\end{proposition}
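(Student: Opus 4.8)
The statement to prove is Proposition~\ref{prop:JacCover}: that the $G$-covering functor $\pi\colon Q\to Q_G$ induces a $G$-covering functor $\pi\colon\wh\Jac(Q,W)\to\wh\Jac(Q_G,W_G)$.

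The plan is to build on the already-established fact that $\pi$ induces a $G$-covering functor $\wh{KQ}\to\wh{KQ_G}$ on completed path algebras, and then check that this functor descends to the Jacobian quotients. So the proof has two halves: first, that $\pi$ maps the Jacobian ideal $\cI_W=\overline{\langle\partial_a W\rangle_{a\in Q_1}}$ into the Jacobian ideal $\cI_{W_G}=\overline{\langle\partial_{Gb}W_G\rangle_{Gb\in (Q_G)_1}}$ and induces a surjection modulo it (so that we genuinely get an algebra homomorphism $\wh\Jac(Q,W)\to\wh\Jac(Q_G,W_G)$); and second, that the $G$-precovering isomorphisms on $\Hom$-spaces survive passing to the quotient.

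First I would verify the ideal statement. For an arrow $a\in Q_1$ with $Ga=Gb$ a given arrow downstairs, one computes $\partial_{Gb}(W_G)$: since $W_G=\sum_p |G|\lambda_p\,Gp$ and cyclic differentiation is compatible with the projection $\pi$ on paths, one gets $\pi(\partial_a W)=\partial_{Ga}(W_G)$ up to the combinatorial bookkeeping of how many representatives in the $G$-orbit of $a$ contribute — admissibility ($g a\neq a$ for $g\neq 1$) ensures each orbit has exactly $|G|$ elements and the factor $|G|$ in $W_G$ is exactly what makes the cyclic derivatives match on the nose (this is the content of Lemma~3.2 of \cite{PS18}, which I would cite). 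Hence $\pi(\cI_W)\subseteq\cI_{W_G}$, and because $\pi\colon\wh{KQ}\to\wh{KQ_G}$ is already known to be dense (it is surjective on arrows, hence on all paths, hence — using continuity for the completion — surjective), $\pi$ induces a well-defined dense algebra homomorphism $\bar\pi\colon\wh\Jac(Q,W)\to\wh\Jac(Q_G,W_G)$. Density of $\bar\pi$ is immediate from density of $\pi$ together with surjectivity of the quotient map $\wh{KQ_G}\to\wh\Jac(Q_G,W_G)$.

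Second, I would establish the $\Hom$-space isomorphisms. For vertices $i,j\in Q_0$, $\Hom_{\wh\Jac(Q,W)}(e_i\wh\Jac(Q,W),e_j\wh\Jac(Q,W))\cong e_j\wh\Jac(Q,W)e_i$, and similarly downstairs; so the claim reduces to showing $\bigoplus_{g\in G}e_{gj}\wh\Jac(Q,W)e_i\xrightarrow{\ \sim\ }e_{Gj}\wh\Jac(Q_G,W_G)e_{Gi}$ (and the symmetric statement). At the level of completed path algebras this isomorphism is exactly the $G$-precovering property of $\pi\colon\wh{KQ}\to\wh{KQ_G}$, which is assumed. The key point is that it is compatible with the $G$-action and with the respective Jacobian ideals: $G$ permutes the summands $e_{gj}\cI_W e_i$ among themselves and $\pi$ carries $\bigoplus_g e_{gj}\cI_W e_i$ onto $e_{Gj}\cI_{W_G}e_{Gi}$. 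For the forward inclusion this follows from $\pi(\cI_W)\subseteq\cI_{W_G}$ applied orbit-wise; for the reverse inclusion one uses that $\cI_{W_G}$ is topologically generated by the $\partial_{Gb}W_G$, each of which is $\pi$ of some $\partial_a W$, and then lifts an arbitrary element of $e_{Gj}\cI_{W_G}e_{Gi}$ through the precovering isomorphism, checking the lift lands in $\bigoplus_g e_{gj}\cI_W e_i$ (here continuity of $\pi$ and completeness are used to handle infinite sums). Quotienting the precovering isomorphism of $\wh{KQ}$-modules by these matched submodules then yields the desired isomorphism $\bigoplus_{g\in G}\Hom_{\wh\Jac(Q,W)}(a,gb)\cong\Hom_{\wh\Jac(Q_G,W_G)}(\bar\pi a,\bar\pi b)$; the second isomorphism is obtained identically using the other precovering equivalence.

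The main obstacle is the compatibility of the precovering isomorphism with the Jacobian ideals in the topological (completed) setting — i.e. showing that the $\Hom$-space isomorphism for completed path algebras restricts to an isomorphism on the ideal parts $\bigoplus_g e_{gj}\cI_W e_i\cong e_{Gj}\cI_{W_G}e_{Gi}$. The forward direction is easy; the reverse direction requires a careful argument that a convergent $\wh{KQ_G}$-combination of the relations $\partial_{Gb}W_G$ has a convergent preimage built from the relations $\partial_a W$, which is where admissibility of the $G$-action (so that orbits have full size $|G|$ and nothing collapses) and the $\mathfrak m$-adic completeness of $\wh{KQ}$ both genuinely enter. Once that module-level identification is in hand, everything else is formal — this is essentially the argument of Proposition~3.1 in \cite{PS18}, and I would present it at that level of detail, citing \cite{PS18} for the verification that the ideals correspond.
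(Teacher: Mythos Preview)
The paper does not give its own proof of Proposition~\ref{prop:JacCover}; it simply records the statement and attributes it to Proposition~3.1 and Lemma~3.2 of \cite{PS18}. Your proposal sketches precisely that argument from \cite{PS18} (matching cyclic derivatives via the factor $|G|$, then descending the precovering isomorphisms through the Jacobian ideals), so it is entirely in line with the paper's treatment---indeed it supplies more detail than the paper itself, which is content to cite the result.
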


From the $G$-covering functor $\pi:\wh \Jac(Q,W)\rightarrow \wh \Jac(Q_G,W_G)$, we obtain that $e_{Ga}\wh \Jac(Q_G,W_G)e_{Gb}$ is finite-dimensional if and only if $\oplus_{b_i\in Gb} (e_{a}\wh \Jac(Q_G,W_G)e_{b_i})$ is finite-dimensional. If $Q$ is a finite quiver, we have:

\begin{corollary}\label{cor:Cover_of_Potential}
  When $G$ acts admissibly on $(Q,W)$, the Jacobian algebra $\wh \Jac(Q,W)$ is finite-dimensional if and only if $\wh \Jac(Q_G,W_G)$ is finite-dimensional.
\end{corollary}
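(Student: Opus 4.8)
The plan is to deduce Corollary \ref{cor:Cover_of_Potential} from Proposition \ref{prop:JacCover} together with the finiteness observation stated just before it. First I would fix notation: write $A = \wh\Jac(Q,W)$ and $B = \wh\Jac(Q_G,W_G)$, and recall that $\pi\colon A \to B$ is the $G$-covering functor, so that for all $a,b \in Q_0$ there are natural isomorphisms $\bigoplus_{g\in G} e_a A e_{gb} \simeq e_{Ga} B e_{Gb}$ (here I view $A$ and $B$ as $K$-categories with objects the vertices, so $\Hom_A(a,b) = e_b A e_a$ in the usual convention). The key point, already recorded in the paragraph preceding the corollary, is that $e_{Ga} B e_{Gb}$ is finite-dimensional if and only if $\bigoplus_{b_i \in Gb} e_a B e_{b_i}$ is finite-dimensional — I would spell this out as the statement that a finite direct sum of $K$-vector spaces is finite-dimensional iff each summand is.

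Next I would run the two implications. For the forward direction, assume $A$ is finite-dimensional. Since $Q_0$ is finite, $G$ is finite and $Q_0$ partitions into finitely many $G$-orbits, $B = \bigoplus_{Ga,Gb} e_{Gb} B e_{Ga}$ is a finite direct sum, so it suffices to show each $e_{Ga} B e_{Gb}$ is finite-dimensional. By the covering isomorphism this space is isomorphic to $\bigoplus_{g \in G} e_a A e_{gb}$, which is a finite direct sum of subspaces of the finite-dimensional algebra $A$, hence finite-dimensional. For the converse, assume $B$ is finite-dimensional. Then each $e_{Ga} B e_{Gb}$ is finite-dimensional, hence so is $\bigoplus_{b_i \in Gb} e_a B e_{b_i}$ by the finiteness observation; but via the $G$-covering isomorphism applied with the roles arranged appropriately (using the second natural equivalence in Definition \ref{def:CoveringFunc}, $\bigoplus_{g\in G} e_a A e_{gb} \simeq e_{Ga} B e_{Gb}$ gives $\bigoplus_{g\in G} e_a A e_{gb}$ finite-dimensional), each $e_a A e_{gb}$ is in particular finite-dimensional. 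Ranging over the finitely many pairs $(a,b)$ and the finitely many $g$, we get that $A = \bigoplus_{a,b} e_b A e_a$ is a finite direct sum of finite-dimensional spaces, hence finite-dimensional.

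I do not expect a genuine obstacle here: the corollary is essentially a bookkeeping consequence of Proposition \ref{prop:JacCover} and the finiteness of $Q$ and $G$. The only point that requires a little care is making sure the two natural equivalences in Definition \ref{def:CoveringFunc} are invoked in the right direction so that one really controls all the blocks $e_a A e_b$ of $A$ from the blocks $e_{Ga} B e_{Gb}$ of $B$ (and vice versa); since $G$ is finite and acts on $Q_0$ with finitely many orbits, summing over orbits and over $G$ both stay within the realm of finite direct sums, so finite-dimensionality is preserved in both directions. I would therefore present the argument compactly, citing Proposition \ref{prop:JacCover} for the block isomorphisms and the preceding paragraph for the elementary equivalence between finite-dimensionality of $e_{Ga} B e_{Gb}$ and of $\bigoplus_{b_i \in Gb} e_a B e_{b_i}$.
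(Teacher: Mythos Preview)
Your proposal is correct and follows essentially the same approach as the paper: the paper's proof consists solely of the observation recorded in the paragraph immediately preceding the corollary (that $e_{Ga}\wh\Jac(Q_G,W_G)e_{Gb}$ is finite-dimensional iff $\bigoplus_{b_i\in Gb} e_a\wh\Jac(Q,W)e_{b_i}$ is, via the covering functor of Proposition~\ref{prop:JacCover}), together with finiteness of $Q$. You have simply spelled out this block-by-block bookkeeping in full detail, which is exactly what is needed.
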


\subsubsection{Jacobian-finite quiver with potential $(C_{3}(\overline{\bA}_n^I), C_3(W_n^I))$}

We start to define a class of 2-acyclic quiver, denoted as $C_{3}(\bA_n^I)$, which is a $\bZ_3$-cover of the 2-cyclic quiver $\overline{\bA}_n^I$. The vertex set $C_{3}(\overline{\bA}_n^I)_0$ of $C_{3}(\overline{\bA}_n^I)$ is defined as the set $\{i_j \mid i \in \overline{\bA}_n^I, 1 \leq j \leq 3\}$ with cardinality $3n$. Each subset $\{i_1, i_2, i_3\}$ corresponds to the fiber of $i \in \overline{\bA}_n^I$ in $C_{3}(\overline{\bA}_n^I)_0$ under the $\bZ_3$-action.

We define the following four types of cycles on the set $\{i_1, i_2, i_3\} \cup \{(i+1)_1, (i+1)_2, (i+1)_3\}$:
\[
\begin{tikzcd}
L_i^{(1)}: & i_1 \arrow[r, "{\ve_{i,1}}"] & i_2 \arrow[r, "{\ve_{i,2}}"] & i_3 \arrow[r, "{\ve_{i,3}}"] & i_1 \\
L_i^{(2)}: & i_1 \arrow[r, "{\ve_{i,1}}"] & i_3 \arrow[r, "{\ve_{i,3}}"] & i_2 \arrow[r, "{\ve_{i,2}}"] & i_1 \\
C_{i,i+1}^{(1)}: & i_1 \arrow[r, "{a_{i,1}}"] & (i+1)_3 \arrow[r, "{b_{i,1}}"] & i_3 \arrow[r, "{a_{i,3}}"] & (i+1)_2 \arrow[r, "{b_{i,3}}"] & i_2 \arrow[r, "{a_{i,2}}"] & (i+1)_1 \arrow[r, "{b_{i,2}}"] & i_1 \\
C_{i,i-1}^{(2)}: & i_1 \arrow[r, "{a_{i,1}}"] & (i+1)_2 \arrow[r, "{b_{i,2}}"] & i_2 \arrow[r, "{a_{i,2}}"] & (i+1)_3 \arrow[r, "{b_{i,3}}"] & i_3 \arrow[r, "{a_{i,3}}"] & (i+1)_1 \arrow[r, "{b_{i,1}}"] & i_1
\end{tikzcd}
\]

We then define the arrows in $C_{3}(\bA_n^I)$ recursively:
\begin{itemize}
    \item[(1)] If $1 \in I$, all the arrows connecting the vertices in $\{1_1, 1_2, 1_3\}$ form a cycle of type $L_1^{(1)}$. If $1 \notin I$, the vertices in $\{1_1, 1_2, 1_3\}$ are not directly connected by arrows in $C_{3}(\bA_n^I)$, and all the arrows connecting $\{1_1, 1_2, 1_3\}$ and $\{2_1, 2_2, 2_3\}$ form a cycle of type $C_{1,2}^{(1)}$.
    \item[(2)] If $i \in I$ and $i \neq 1$, all the arrows connecting the vertices in $\{i_1, i_2, i_3\}$ form a cycle of type $L_i^{(1)}$ or $L_i^{(2)}$. Additionally, all the arrows connecting $\{i_1, i_2, i_3\}$ and $\{(i+1)_1, (i+1)_2, (i+1)_3\}$ form a cycle of type $C_{i,i+1}^{(1)}$ or $C_{i,i+1}^{(2)}$. Both types are determined by the cycle type between $\{(i-1)_1, (i-1)_2, (i-1)_3\}$ and $\{i_1, i_2, i_3\}$: if it is $C_{i-1,i}^{(1)}$, then $L_i^{(1)}$ and $C_{i,i+1}^{(1)}$; if it is $C_{i-1,1}^{(2)}$, then $L_i^{(2)}$ and $C_{i,i+1}^{(2)}$.
    \item[(3)] If $i \notin I$, all the arrows between $\{i_1, i_2, i_3\}$ and $\{(i+1)_1, (i+1)_2, (i+1)_3\}$ form a cycle of type $C_{i,i+1}^{(1)}$ or $C_{i,i+1}^{(2)}$, determined by the cycle type between $\{(i-1)_1, (i-1)_2, (i-1)_3\}$ and $\{i_1, i_2, i_3\}$: if it is $C_{i-1,i}^{(1)}$, then $C_{i,i+1}^{(2)}$; if it is $C_{i-1,i}^{(2)}$, then $C_{i,i+1}^{(1)}$.
\end{itemize}

\begin{example}
The quiver $C_3(\overline{\bA}_3^{[1,1]})$ is of the form:
\begin{figure}[H]
    \centering
    \includegraphics[width=0.4\linewidth]{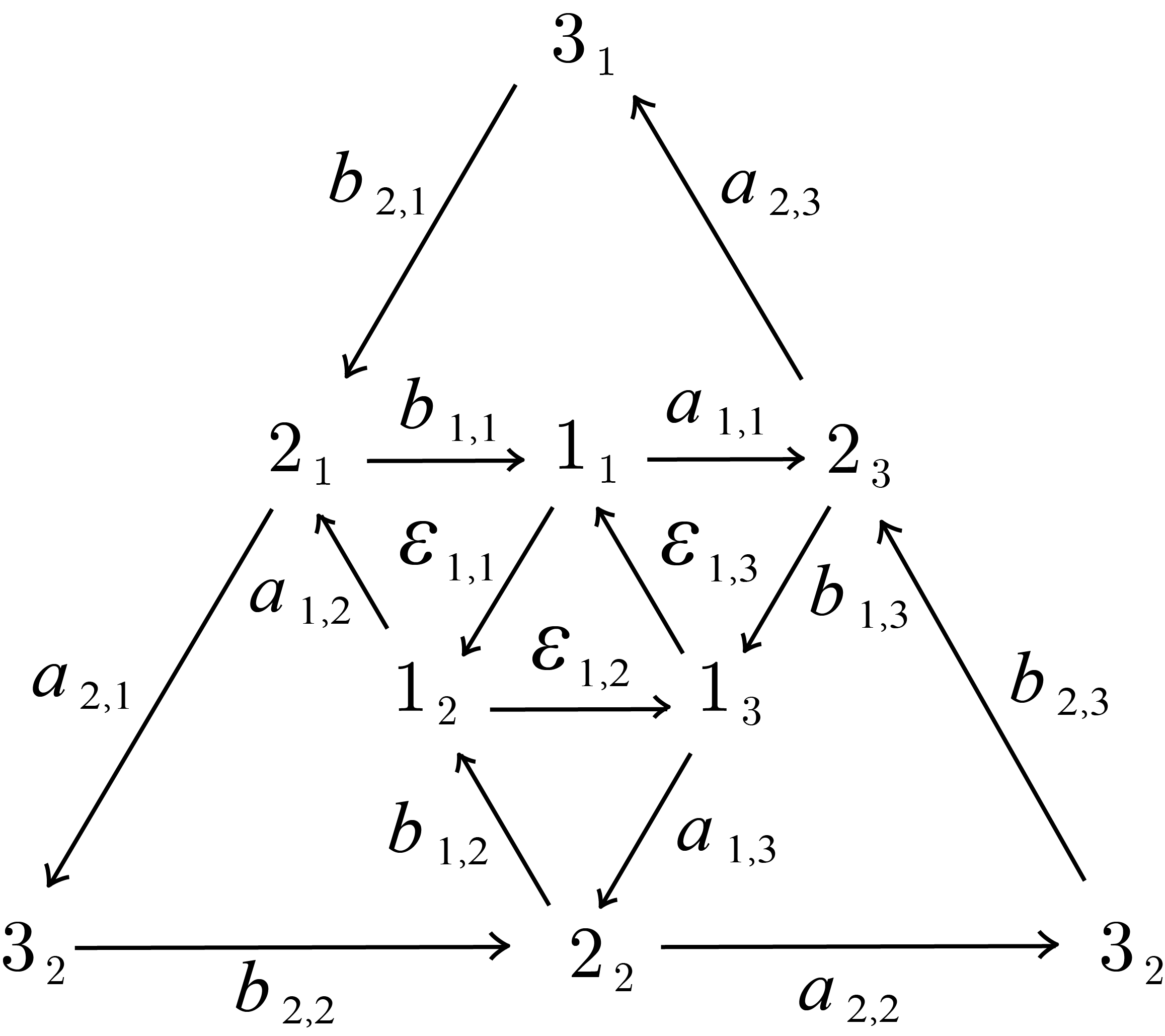}
    \caption{$C(\overline{\bA}_3^{[1,1]})$}
    \label{fig:Z3011cover}
\end{figure}
\end{example}

Consider the $\bZ_3$-cover $C_3(\overline{\bA}_n^{[1,m]})$ of 2-cyclic quiver $\overline{\bA}_n^{[1,m]}$, there exists a potentials $C_3(W_n^{[1,m]})$ on $C_3(\overline{\bA}_n^{[1,m]})$ of the form:
\begin{equation}\label{eq:Z3cover_of_potential}
\begin{aligned}
&C_3(W_n^{[1,m]})=\sum_{i=1}^m(k_i\ve_{i,1}\ve_{i,2}\ve_{i,3}+t_ia_{i,1}b_{i,3}a_{i,3}b_{i,2}a_{i,2}b_{i,1})\\
&+\sum_{i=1}^m\sum_{j=1}^3 (\ve_{i,j}a_{i,j+1}b_{i,j+1}+\ve_{i,j}a_{i-1,j+1}b_{i-1,j+1})+\\
&+\sum_{\substack{ i\geq m,\\i\equiv m\pmod{2}}}\sum_{j=1}^3 (a_{i,j}a_{i+1,j-1}b_{i+1,j}b_{i,j}+t_ia_{i,1}b_{i,3}a_{i,3}b_{i,2}a_{i,2}b_{i,1})\\
&+\sum_{\substack{ i\geq m,\\i\equiv m+1\pmod{2}}}\sum_{j=1}^3 (a_{i,j}a_{i+1,j+1}b_{i+1,j+1}a_{i+1,j}+t_ia_{i,1}b_{i,2}a_{i,2}b_{i,3}a_{i,3}b_{i,1})
\end{aligned}
\end{equation}
Let $g$ be an automorphism of $C_3(\overline{\bA}_n^{[1,m]})$ satisfied $g(i_j)=i_{j+1}, g(\ve_{i,j})=\ve_{i,j+1}, g(a_{i,j})=a_{i,j+1}, g(b_{i,j})=b_{i,j+1}$, it is not hard to see that $G=\la g^i \ra\simeq \bZ_3$ and $G$ acts admissibly on $(C_3(\overline{\bA}_n^{[1,m]}),C_3(W_n^{[1,m]}))$, furthermore, we have $(GC_3(\overline{\bA}_n^{[1,m]}),GC_3(W_n^{[1,m]}))=(\overline{\bA}_n^{[1,m]},W_n^{[1,m]})$. By corollary \ref{cor:JacFin}, we know that $W_n^{[1,m]}$ is a Jacobian-finite potential if it satisfied the $[1,m]$-finite dimensional condition. Thus, by corollary \ref{cor:Cover_of_Potential}, we obtain:
\begin{theorem}
  The quiver with potential $(C_3(\overline{\bA}_n^{[1,m]}),C_3(W_n^{[1,m]}))$ defined as above is Jacobian-finite if for any $1 \leq i' \leq m$ and $1 \leq s' \leq \frac{n-m-2}{2}$, $k_i,t_i$ satisfied the 
  equation \ref{eq:finite_dimensional_condition}.
\begin{equation}
\begin{aligned}
\left( \sum_{i=i'}^{m} (-1)^i k_i + \sum_{s=1}^{s'} (-1)^s t_{m+2s-1} \right) \neq 0
\end{aligned}
\label{eq:finite_dimensional_condition}
\end{equation}
\end{theorem}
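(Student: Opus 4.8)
The plan is to read the theorem off from two facts already assembled above: the finite-dimensionality of $\wh{\Jac}(\overline{\bA}_n^{[1,m]},W_n^{[1,m]})$ under the $[1,m]$-finite dimensional condition, and the covering-theoretic transfer of finite-dimensionality between a quiver with potential and its orbit quiver with potential. Indeed, Corollary~\ref{cor:JacFin} asserts that $\wh{\Jac}(\overline{\bA}_n^{[1,m]},W_n^{[1,m]})$ is a finite-dimensional $K$-algebra whenever the $k_i,t_i$ satisfy \eqref{eq:finite_dimensional_condition} for all admissible $i',s'$, while the discussion preceding the theorem records that $G=\langle g\rangle\cong\bZ_3$ acts admissibly on the \emph{finite} quiver with potential $(C_3(\overline{\bA}_n^{[1,m]}),C_3(W_n^{[1,m]}))$, with orbit quiver with potential $\big((C_3(\overline{\bA}_n^{[1,m]}))_G,(C_3(W_n^{[1,m]}))_G\big)=(\overline{\bA}_n^{[1,m]},W_n^{[1,m]})$ (at most up to a harmless common rescaling of the $k_i,t_i$, which leaves \eqref{eq:finite_dimensional_condition} unchanged since it only asks a linear combination to be nonzero). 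So the proof amounts to chaining these through Corollary~\ref{cor:Cover_of_Potential}.

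Carrying this out: first invoke Corollary~\ref{cor:JacFin} to conclude that, under \eqref{eq:finite_dimensional_condition}, $\wh{\Jac}(\overline{\bA}_n^{[1,m]},W_n^{[1,m]})$ is finite-dimensional. Then apply Corollary~\ref{cor:Cover_of_Potential} to the admissible $\bZ_3$-action on $(C_3(\overline{\bA}_n^{[1,m]}),C_3(W_n^{[1,m]}))$: since its orbit quiver with potential is $(\overline{\bA}_n^{[1,m]},W_n^{[1,m]})$, the algebra $\wh{\Jac}(C_3(\overline{\bA}_n^{[1,m]}),C_3(W_n^{[1,m]}))$ is finite-dimensional if and only if $\wh{\Jac}(\overline{\bA}_n^{[1,m]},W_n^{[1,m]})$ is. Combining the two statements gives that $\wh{\Jac}(C_3(\overline{\bA}_n^{[1,m]}),C_3(W_n^{[1,m]}))$ is finite-dimensional, i.e.\ that $(C_3(\overline{\bA}_n^{[1,m]}),C_3(W_n^{[1,m]}))$ is Jacobian-finite, which is the assertion of the theorem.

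The substantive content lies entirely in the two corollaries being chained, so the only point that genuinely needs checking is the input to Corollary~\ref{cor:Cover_of_Potential}: that the $\bZ_3$-datum is exactly as claimed just above the theorem. One would verify that the recursive prescription alternating the cycle types $L_i^{(1)},L_i^{(2)}$ and $C_{i,i+1}^{(1)},C_{i,i+1}^{(2)}$ yields a well-defined quiver $C_3(\overline{\bA}_n^{[1,m]})$ on the vertex set $\{i_j\}$; that $g\colon i_l\mapsto i_{l+1}$, $\ve_{i,l}\mapsto\ve_{i,l+1}$, $a_{i,l}\mapsto a_{i,l+1}$, $b_{i,l}\mapsto b_{i,l+1}$ (second indices read mod $3$) respects sources and targets of every arrow, permutes the summands of $C_3(W_n^{[1,m]})$, and fixes no vertex and no arrow, so that $G=\langle g\rangle\cong\bZ_3$ acts admissibly; and that collapsing each fibre $\{i_1,i_2,i_3\}$ to a single vertex $i$ (and $\ve_{i,1},\ve_{i,2},\ve_{i,3}$ to the loop $\ve_i$, the $a_{i,l}$ to $a_i$, the $b_{i,l}$ to $b_i$) carries every summand of $C_3(W_n^{[1,m]})$ to the corresponding cycle of $W_n^{[1,m]}$ — for instance $\ve_{i,1}\ve_{i,2}\ve_{i,3}\mapsto\ve_i^3$ — with the coefficients matching once one sums over the three sheets. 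I expect this bookkeeping, rather than any conceptual step, to be the only laborious part; no idea beyond the constructions of Sections~\ref{sec: Main} and~\ref{sec:Jacobian-finite_Potentials_for_Z3-covers_of_the_2-cyclic_Quivers_with_Potentials} is required.
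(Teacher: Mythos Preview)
Your proposal is correct and matches the paper's argument essentially verbatim: the paper also derives the theorem by observing that $G\cong\bZ_3$ acts admissibly on $(C_3(\overline{\bA}_n^{[1,m]}),C_3(W_n^{[1,m]}))$ with orbit $(\overline{\bA}_n^{[1,m]},W_n^{[1,m]})$, then chains Corollary~\ref{cor:JacFin} through Corollary~\ref{cor:Cover_of_Potential}. Your additional remarks on the bookkeeping needed to verify the $\bZ_3$-datum are accurate but go slightly beyond what the paper spells out (it simply asserts these facts as ``not hard to see'').
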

Thus, we obtained a Jacobian-finite potential for the quiver $C_3(\overline{\bA}_n^{[1,m]})$. By theorem \ref{theorem:Main_theorem_DWZ}, we further obtain:
\begin{corollary}
  Given a quiver with potential $(Q,W)$ that is obtained through a finite sequence of mutations from $(C_3(\overline{\bA}_n^{[1,m]}),C_3(W_n^{[1,m]}))$, and assuming $W_n^{[1,m]}$ satisfies the $[1,m]$-finite dimensional condition, it follows that $(Q,W)$ is Jacobian finite.
\end{corollary}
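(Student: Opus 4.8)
The plan is to argue by induction on the length $s$ of the mutation sequence, taking as base case the Jacobian-finiteness of $(C_3(\overline{\bA}_n^{[1,m]}),C_3(W_n^{[1,m]}))$ established in the Theorem immediately above, and using Theorem \ref{theorem:Main_theorem_DWZ} to propagate Jacobian-finiteness through each mutation step. Concretely, write $(Q,W)=\mu_{k_s}\cdots\mu_{k_1}(C_3(\overline{\bA}_n^{[1,m]}),C_3(W_n^{[1,m]}))$, where each $\mu_{k_i}$ is understood to be applied at a vertex that is mutable (no loop or $2$-cycle) in the quiver with potential produced by the preceding mutations, so that the sequence is well defined. The hypothesis that $W_n^{[1,m]}$ satisfies the $[1,m]$-finite dimensional condition is exactly the hypothesis of that Theorem, so the case $s=0$ holds.

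For the inductive step, suppose a quiver with potential $(Q',W')$ is Jacobian-finite and $k$ is a mutable vertex of $(Q',W')$. By the second bullet of Theorem \ref{theorem:Main_theorem_DWZ}, $\widetilde{\mu_k}(Q',W')$ is again Jacobian-finite. Since $\mu_k(Q',W')$ is by definition the reduced part of $\widetilde{\mu_k}(Q',W')$, and since a quiver with potential and its reduced part have isomorphic completed Jacobian algebras — the decomposition of \cite{DWZ1} recalled after Definition \ref{def:QP_Related_Concepts} identifies $\wh\Jac(\widetilde{\mu_k}(Q',W'))$ with $\wh\Jac(\mu_k(Q',W'))$, the trivial summand not affecting finite-dimensionality — it follows that $\mu_k(Q',W')$ is Jacobian-finite as well. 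Iterating this $s$ times along the given sequence shows that $(Q,W)$ is Jacobian-finite, as claimed.

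The argument is essentially formal once the Theorem on $(C_3(\overline{\bA}_n^{[1,m]}),C_3(W_n^{[1,m]}))$ is available; I do not expect any serious obstacle. The only point requiring a little care is the passage from the raw DWZ-mutation $\widetilde{\mu_k}$, which may introduce a trivial summand of $2$-cycles, to its reduced part $\mu_k$, and this is handled by the uniqueness of the trivial/reduced decomposition. All the real content is concentrated in the Theorem above and, behind it, in Corollary \ref{cor:JacFin} together with Corollary \ref{cor:Cover_of_Potential}; this final corollary is just the observation that Jacobian-finiteness is a mutation invariant.
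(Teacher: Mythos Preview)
Your proposal is correct and follows exactly the paper's approach: the paper simply writes ``By Theorem \ref{theorem:Main_theorem_DWZ}, we further obtain'' before stating the corollary, so the entire argument is the mutation-invariance of Jacobian-finiteness applied inductively to the base case given by the preceding Theorem. Your added remark about passing from $\widetilde{\mu_k}$ to its reduced part $\mu_k$ is a harmless elaboration of a point the paper leaves implicit.
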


\begin{remark}
  When $\overline{\bA}_n^{[1,m]}$ is exactly the quiver $\overline{\bA_2}^{\emptyset}$, $\overline{\bA_2}^{[1,2]}$ and $\overline{\bA_3}^{\emptyset}$, the potential $C_3(W_2^{\empty})$, $C_3(W_2^{[1,2]})$ and $C_3(W_3^{\emptyset})$ corresponds to the Labardini-potential of the triangulations on the once-puncture disk with 6 marked points, the sphere with 4 punctures and the sphere with 5 punctures respectively, and it has been proven that they are all non-degenerated potentials in \cite{LF1}, as a general case, we also conjecture that $(C(\bA_n^{I},W_n^{I}))$ is a non-degenerated potential.
\end{remark}

\begin{conjecture}
For any $m,n\in\bZ_n$, the potential $C(W_n^I)$ is non-degenerated.
\end{conjecture}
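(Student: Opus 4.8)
We sketch a possible route toward the conjecture; it is a program rather than a proof, and the hardest parts are flagged below.

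The plan is to establish non-degeneracy \emph{geometrically}, by recognising $(C_3(\overline{\bA}_n^I),C_3(W_n^I))$ as the quiver with potential of a triangulated surface \emph{without} orbifold points and then quoting the known non-degeneracy of Labardini-Fragoso potentials. The first step is to identify $(\overline{\bA}_n^I,W_n^I)$ itself, for general $n$ and $I=[1,m]$, with (the reduced part of) the orbifold adjacency quiver with potential of a triangulation $T$ of a suitable orbifold surface $\Sigma_n^I$ — a disk or a sphere with punctures and exactly $m$ order-$3$ cone points (equivalently $3$-punctures in the Paquette-Schiffler sense \cite{PS18}), generalising the three cases recorded in the Remark: the loops $\ve_i$ are the pending arcs at the cone points, the chains $a_i,b_i$ are the arcs of a zig-zag triangulation, and the cubic and square terms of $W_n^I$ are the cone-point and triangle contributions of the orbifold analogue of the Labardini potential. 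The second step is to pass to the $\bZ_3$-cover $\widetilde\Sigma_n^I\to\Sigma_n^I$ of orbifolds, branched at the order-$3$ points, which is an honest (orbifold-free) surface carrying the lifted triangulation $\widetilde T$; one then checks, using compatibility of the covering construction with the construction of the associated quiver with potential in the spirit of \cite{PS18}, that the adjacency quiver with potential of $\widetilde T$ is right-equivalent to $(C_3(\overline{\bA}_n^I),C_3(W_n^I))$ (the vertex count matches, since $|\widetilde T|=3n=|C_3(\overline{\bA}_n^I)_0|$, and a pending arc lifts to a $\bZ_3$-orbit of arcs forming one of the $3$-cycles $L_i^{(1)},L_i^{(2)}$). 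The third step is then immediate: $\widetilde T$ triangulates a surface with no orbifold points, so by the results of Labardini-Fragoso \cite{LF1,LF2} its potential is non-degenerate — mutating the associated quiver with potential at an arc realises the flip of that arc, every flip of a triangulation is again a triangulation, and the reduced adjacency quiver with potential of a triangulation has no loops or $2$-cycles — and non-degeneracy transports to $C_3(W_n^I)$ along the right equivalence.

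Carrying this out, the order of work is: (i) write $\Sigma_n^I$ and $T$ explicitly and verify that the orbifold adjacency quiver of $T$ is $\overline{\bA}_n^I$; (ii) match the geometric potential of $T$ with $W_n^I$ up to right equivalence, which forces one to understand the role of the scalars $k_i,t_i$ — the geometric potential carries unit coefficients, so by the rescaling automorphisms of \cite{DWZ1} one expects right equivalence precisely for \emph{generic} $(k_i,t_i)$, and this is presumably where the $[1,m]$-finite dimensional condition (or a variant of it) must re-enter the statement of the conjecture; (iii) build $\widetilde\Sigma_n^I$, lift $T$ to $\widetilde T$, and identify the resulting quiver with potential with $(C_3(\overline{\bA}_n^I),C_3(W_n^I))$, choosing the right equivalence $G$-equivariantly so that it is compatible with the $\bZ_3$-action; (iv) invoke \cite{LF1,LF2}.

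The main obstacle I expect is steps (ii) and (iii): since potentials are defined only up to right equivalence, matching the quivers is not enough — one must exhibit an explicit (and, ideally, $G$-equivariant) algebra automorphism carrying the geometric potential onto $C_3(W_n^I)$, and both the scalar bookkeeping and the equivariance are delicate. A structural point worth flagging is that a purely covering-theoretic argument cannot replace the surface model: a mutation sequence on $C_3(\overline{\bA}_n^I)$ need not be $G$-equivariant, hence does not descend to a mutation sequence on $\overline{\bA}_n^I$, so non-degeneracy of the cover is genuinely stronger than anything visible on the $2$-cyclic base, and the geometric input is essential precisely because flips of triangulations stay triangulations, a fact for which I see no algebraic shortcut. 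Should the zig-zag triangulation fail to exist on $\Sigma_n^I$ for some $(n,m)$, the fallback would be an induction on $n$ built on the restriction isomorphisms $\wh{\Jac}(\overline{\bA}_n^{[1,m]},W_n^{[1,m]})/\langle e_1,\dots,e_{d-1}\rangle\simeq\wh{\Jac}(\overline{\bA}_{n-d+1}^{[1,m-d+1]},W_{n-d+1}^{[1,m-d+1]})$ recorded above, but propagating such an induction through arbitrary non-equivariant mutation sequences seems considerably harder, and I do not currently see how to make it work.
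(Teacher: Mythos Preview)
The statement you are attempting to prove is a \emph{conjecture} in the paper, not a theorem: the paper offers no proof and does not claim one. The only evidence it records is the Remark immediately preceding the conjecture, which observes that for the three small cases $\overline{\bA_2}^{\emptyset}$, $\overline{\bA_2}^{[1,2]}$ and $\overline{\bA_3}^{\emptyset}$ the $\bZ_3$-cover potential coincides with a Labardini potential on an explicit triangulated surface, and those are known to be non-degenerate by \cite{LF1}. There is therefore nothing in the paper to compare your argument against.

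That said, your program is very much in the spirit of the paper's heuristic: the authors clearly have the same geometric picture in mind (orbifold downstairs, honest surface upstairs, Labardini potential on the cover), and the three cases they check are exactly the base cases of your step (i)--(iii). You are right to flag steps (ii) and (iii) as the crux, and in particular the scalar issue you raise is genuine: the paper's potential carries free parameters $k_i,t_i$ subject to the $[1,m]$-finite-dimensional condition, while the Labardini potential has fixed coefficients, so the conjecture as literally stated (``for any $m,n$'') is ambiguous about whether it is meant for all admissible $(k_i,t_i)$ or only for the geometric specialisation. Your observation that non-degeneracy of the cover cannot be deduced from the base by covering theory alone, because mutation sequences on the cover need not be $G$-equivariant, is also correct and is precisely why the surface model is indispensable. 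In short, your proposal is a sensible outline of how one would \emph{try} to settle the conjecture, and it goes well beyond anything the paper attempts; but it remains a program, as you yourself say, and the paper provides no proof to validate it against.
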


\subsection{Categorification of Paquette Schiffler's Generalized Cluster Algebras}\label{sec: Categorification_of_PS_Generalized_Cluster_Algebras}

An orbifold is a triple \((S, M, O)\), where \(S\) is a Riemann surface, \(M\) is a finite subset of \(S\), and \(O\) is a finite subset of \(S\) equipped with a valued map \(v:O \rightarrow \mathbb{Z}_{\geq 1}\). Inspired by the works of Fomin, Shapiro and Thurston \cite{FST08, FT18}, Paquette and Schiffler defined the generalized cluster algebra for an orbifold in \cite{PS18}, which generalizes the cluster algebra from a triangulated surface. In this paper, we do not recall the specific definition of Paquette-Schiffler's generalized cluster algebras but directly state the specific generalized cluster algebras for each cases. For more details of Paquette-Schiffler's generalized cluster algebras, readers can refer to \cite{PS18}. In the following discussion of this section, for any vector $a=(a_1,...,a_n)^T \in \mathbb{Z}^n$, we denote the monomial $x_1^{a_1}\cdots x_n^{a_n}$ as $x^a$.

We now provide the definition of the 2-cyclic Caldero-Chapoton formula for the module \(M \in \mathrm{mod}~\widehat{\Jac}(\overline{\mathbb{A}}^{[1,m]}_n, W_n^{[1,m]})\) and the definition of $\tau$-rigid modules and support $\tau$-tilting pair.

\begin{definition}\label{def:tau_tlit}\cite{AIR14}
Let $A$ be a finite-dimensional $K$-algebra, an $A$-module $M$ is called $\tau$-rigid if $\Hom_A(M, \tau M) = 0$.
\end{definition}

\begin{definition}\label{def:2cyclicCCmap}
For any $M \in \mathrm{mod}~\widehat{\Jac}(\overline{\mathbb{A}}^{[1,m]}_n, W_n^{[1,m]})$, the following map is defined:
\begin{equation}\label{eq:2cyclicCCmap}
CC(M) = \left( \sum_{e \in \mathbb{N}^n} \chi(\mathrm{Gr}_e(M))) \right) x^{-g_M},
\end{equation}
where $\Gr_e(M)$ is the quiver Grassmannian of $M$ with respect to the dimension vector $e$. This map is referred as the 2-cyclic Caldero-Chapoton map (or 2-cyclic CC-map).
\end{definition}

In the following two subsections, to provide a precise description of Paquette-Schiffler's generalized cluster algebras for each case, we will first list all the generators of Paquette-Schiffler's generalized cluster algebras. By using the 2-cyclic Caldero-Chapoton formula and the Jacobian-finite potential defined in Section \ref{section:jacobian_finite_2cyclic}, we will prove that for each case of the 2-cyclic quiver, the indecomposable \(\tau\)-rigid modules of the corresponding Jacobian algebras correspond to the non-initial cluster variables of the respective Paquette-Schiffler's generalized cluster algebras.

\subsubsection{Categorification of Paquette Schiffler's Generalized Cluster Algebras of $\overline{\bA_2}^{\emptyset}$}

In this section, we consider the Paquette-Schiffler's generalized cluster algebras $\cA(\overline{\bA_2}^{\emptyset})$ of $\overline{\bA_2}^{\emptyset}$, it is corresponding to the Paquette-Schiffler's generalized cluster algebras of $(D^2,\{ m_1,m_2, p \},v)$, where $v(p)=3$. This is a subalgebra of $\bZ[x_1^\pm, x_2^\pm]$. All the generators of $\cA(\overline{\bA_2}^{\emptyset})$ are of the forms:
\[
x_1,x_2,\dfrac{2x_2}{x_1},\dfrac{2x_1}{x_2},\\ 
\dfrac{6}{x_1},\dfrac{6}{x_2}
\]

\begin{remark}
There exists a triangulation on $(D^2,\{ m_1,m_2, p \},v)$ such that its associated quiver is $\overline{\bA_2}^{\emptyset}$.
\begin{figure}[H]
    \centering
    \includegraphics[width=0.3\linewidth]{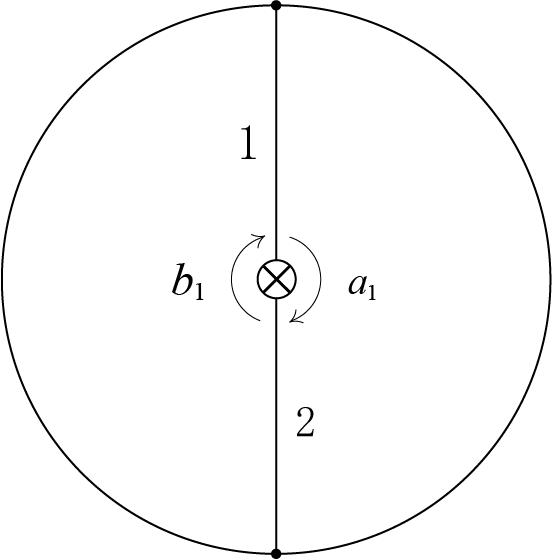}
    \caption{$(D^2,\{ m_1,m_2, p \},v)$}
    \label{fig:D2}
\end{figure}
\end{remark}

From Theorem \ref{cor:JacFin}, it is known that $(\overline{\bA_2}^{\emptyset}, (a_1b_1)^3)$ is a Jacobian-finite quiver with potential. The Auslander-Reiten quiver of $\modu~\wh{\Jac}(\overline{\bA_2}^{\emptyset},(a_1b_1)^3)$ is as follows:
\begin{figure}[H]
    \centering
    \includegraphics[width=0.6\linewidth]{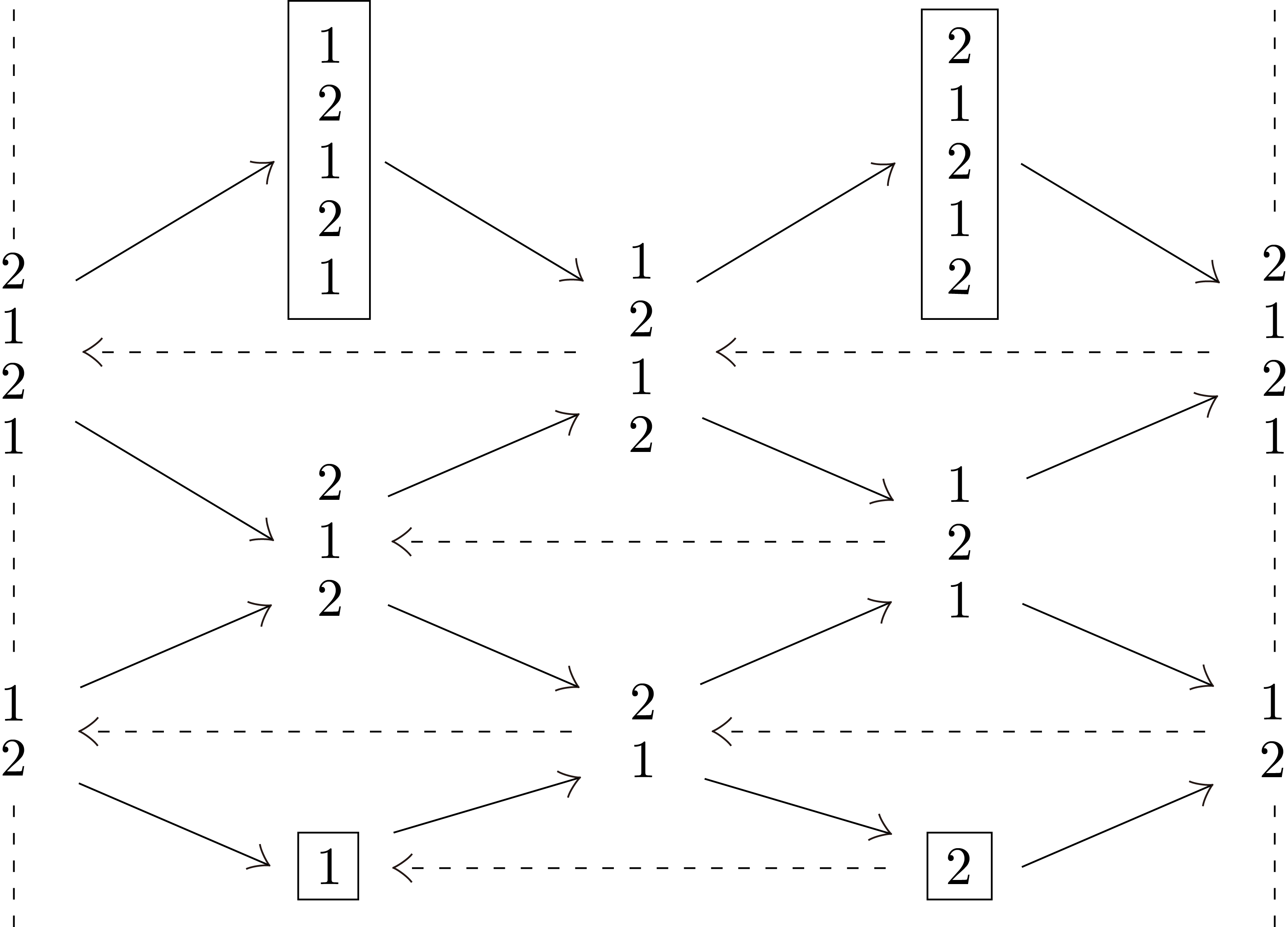}
    \caption{Auslander-Reiten quiver of $\modu~\wh{\Jac}(\overline{\bA_2}^{\emptyset},(a_1b_1)^3)$}
    \label{fig:ARQuiver_A00}
\end{figure}

The indecomposable modules enclosed by the square in the Auslander-Reiten quiver are the indecomposable $\tau$-rigid modules of $\modu~\wh{\Jac}(\overline{\bA_2}^{\emptyset},(a_1b_1)^3)$. We denote them as follows:

\begin{equation}\label{eq:TauRigidA11}
\begin{aligned}
S_1=1,~&
S_2=2,~&
P_1=~\begin{matrix}
 1\\
 2\\
 1\\
 2\\
 1
\end{matrix},~&
P_2=~\begin{matrix}
 2\\
 1\\
 2\\
 1\\
 2
\end{matrix}.
\end{aligned} 
\end{equation}
The $g$-vector of each indecomposable $\tau$-rigid module is $g_{S_1}=[1,-1], g_{S_2}=[-1,1], g_{P_1}=[1,0], g_{P_2}=[0,1]$. moreover, we have:
\begin{equation}\label{eq:CCTauRigidA11}
\hspace{-1cm}
\begin{array}{rl}
CC(S_1) & = (1+\chi(\Gr_{[1,0]}(S_1)))\dfrac{x_2}{x_1}=\dfrac{2x_2}{x_1},\\[10pt]
CC(S_2) & = (1+\chi(\Gr_{[0,1]}(S_2)))\dfrac{x_1}{x_2}=\dfrac{2x_1}{x_2},\\[10pt]
CC(P_1) & = (1+\chi(\Gr_{[1,0]}(P_1))+\chi(\Gr_{[1,1]}(P_1))+\chi(\Gr_{[2,1]}(P_1)) \\
        & \hspace{3cm}+\chi(\Gr_{[2,2]}(P_1))+\chi(\Gr_{[3,2]}(P_1)))\dfrac{1}{x_1}=\dfrac{6}{x_1},\\[10pt]
CC(P_2) & = (1+\chi(\Gr_{[0,1]}(P_2))+\chi(\Gr_{[1,1]}(P_2))+\chi(\Gr_{[1,2]}(P_2)) \\
        & \hspace{3cm}+\chi(\Gr_{[2,2]}(P_2))+\chi(\Gr_{[2,3]}(P_2)))\dfrac{1}{x_2}=\dfrac{6}{x_2}.
\end{array}
\end{equation}
Thus, we obtain the following theorem:

\begin{theorem}\label{thm:Categorify_A00_GCA}
The 2-cyclic CC-map $CC(-): \modu~\wh \Jac(\overline{\bA_2}^{\emptyset}, (a_1b_1)^3) \rightarrow \bZ[x_1^{\pm},x_2^{\pm}]$ provides a bijection between the indecomposable $\tau$-rigid modules in $\modu~\wh \Jac(\overline{\bA_2}^{\emptyset}, (a_1b_1)^3)$ and the non-initial cluster variables in $\cA(\overline{\bA_2}^{\emptyset})$.
\end{theorem}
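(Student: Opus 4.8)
The plan is to prove the bijection by an explicit finite computation, since both the source and the target are small finite sets. First I would pin down the ring structure of $A := \wh{\Jac}(\overline{\bA_2}^{\emptyset}, (a_1b_1)^3)$: the two cyclic derivatives of $(a_1b_1)^3$ equal, up to the scalar $3$, the length-$5$ paths $(b_1a_1)^2b_1$ and $(a_1b_1)^2a_1$, so $A$ is the Nakayama algebra on the $2$-cycle $1\rightleftarrows 2$ whose indecomposable projectives $P_1,P_2$ are uniserial of length $5$ with the alternating composition series displayed in \eqref{eq:TauRigidA11}. In particular every indecomposable $A$-module is uniserial, hence determined by a start vertex and a length, and the Auslander--Reiten quiver of $\modu\,A$ (Figure~\ref{fig:ARQuiver_A00}) has exactly ten vertices. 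From it I would read off that $S_1,S_2,P_1,P_2$ are the only indecomposable $\tau$-rigid modules: $\tau P_i=0$ since $P_i$ is projective, $\tau S_1=S_2$ and $\tau S_2=S_1$ (so $\Hom_A(S_i,\tau S_i)=0$), whereas for each of the remaining six indecomposables $M$ the module $\tau M$ is uniserial of the same length but with the start vertex switched, and $M$ and $\tau M$ share a uniserial subquotient $N$, giving a nonzero composite $M\twoheadrightarrow N\hookrightarrow\tau M$. On the cluster side, the cluster variables of $\cA(\overline{\bA_2}^{\emptyset})$ are the six generators listed, of which $x_1,x_2$ form the initial seed; hence the non-initial ones are precisely $\tfrac{2x_2}{x_1}$, $\tfrac{2x_1}{x_2}$, $\tfrac{6}{x_1}$, $\tfrac{6}{x_2}$.

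Next I would evaluate $CC(-)$ on $S_1,S_2,P_1,P_2$. Minimal projective presentations give $g_{S_1}=[1,-1]$ (from $P_2\to P_1\to S_1\to 0$) and, by the symmetry of $A$ swapping the two vertices, $g_{S_2}=[-1,1]$, while $g_{P_1}=[1,0]$ and $g_{P_2}=[0,1]$; so the monomial factors $x^{-g_M}$ are $x_2/x_1$, $x_1/x_2$, $1/x_1$, $1/x_2$. For the scalar factor $\sum_{e}\chi(\Gr_e(M))$ I use that a uniserial module of length $\ell$ has exactly one submodule of each length $0,1,\dots,\ell$, that these submodules have pairwise distinct dimension vectors, and that each nonempty quiver Grassmannian is therefore a single reduced point. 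This yields $\sum_e\chi(\Gr_e(S_i))=2$ and $\sum_e\chi(\Gr_e(P_i))=6$, hence $CC(S_1)=\tfrac{2x_2}{x_1}$, $CC(S_2)=\tfrac{2x_1}{x_2}$, $CC(P_1)=\tfrac{6}{x_1}$, $CC(P_2)=\tfrac{6}{x_2}$, in agreement with \eqref{eq:CCTauRigidA11}.

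These four Laurent expressions are pairwise distinct and exhaust the four non-initial cluster variables of $\cA(\overline{\bA_2}^{\emptyset})$, so $CC(-)$ restricts to a bijection from the set of indecomposable $\tau$-rigid $A$-modules onto the set of non-initial cluster variables, which is the assertion. The only step that requires genuine care is the bookkeeping behind the two lists: confirming via the Auslander--Reiten quiver and the $\tau$-rigidity test above that no indecomposable besides $S_1,S_2,P_1,P_2$ is $\tau$-rigid, and citing \cite{PS18} for the fact that the six displayed elements really are all the cluster variables. Once $A$ is recognised as the length-$5$ Nakayama algebra on the $2$-cycle, the uniseriality of every module forces all Grassmannian Euler characteristics, and the $g$-vector and $CC$ evaluations are then routine; I do not expect any serious obstacle here.
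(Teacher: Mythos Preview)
Your proposal is correct and follows essentially the same route as the paper: list the non-initial cluster variables of $\cA(\overline{\bA_2}^{\emptyset})$, identify the four indecomposable $\tau$-rigid modules $S_1,S_2,P_1,P_2$ from the Auslander--Reiten quiver, compute their $g$-vectors and quiver-Grassmannian Euler characteristics, and match the resulting $CC$-values with the cluster variables. Your additional observation that $A$ is the length-$5$ cyclic Nakayama algebra, together with the explicit $\tau$-computation ruling out the other six indecomposables, supplies justification that the paper leaves to the figure, but the overall argument is the same finite check.
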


\subsubsection{Categorification of Paquette Schiffler's Generalized Cluster Algebras of $\overline{\bA_2}^{[1,2]}$}

In this section, we consider the generalized cluster algebras $\cA(\overline{\bA_2}^{[1,2]})$ of $\overline{\bA_2}^{[1,2]}$, which corresponds to the Paquette Schiffler's Generalized Cluster Algebras of $(S^2,\{ p_1,p_2 \}, \{ o\},v)$, where $v(p_1)=1,v(p_2)=3$. This is also a subalgebra of $\bZ[x_1^\pm,x_2^\pm]$, the generators of $\cA(\overline{\bA_2}^{[1,2]})$ are:
\[
x_1,x_2,\dfrac{3x_2}{x_1},\dfrac{3x_1}{x_2},\\ 
\dfrac{9}{x_1},\frac{9}{x_2}
\]

\begin{remark}
There exists a triangulation on $(S^2,\{ p_1,p_2 \}, \{ o\},v)$ whose associated quiver is $\overline{\bA_2}^{[1,2]}$
\begin{figure}[H]
    \centering
    \includegraphics[width=0.3\linewidth]{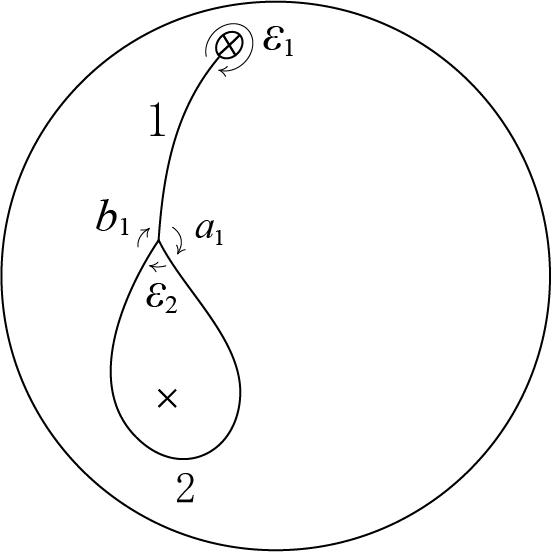}
    \caption{$(S^2,\{ p_1,p_2 \}, \{ o\},v)$}
    \label{fig:ARQuiver_A00}
\end{figure}

\end{remark}

According to the theorem \ref{cor:JacFin}, we have known that $(\overline{\bA_2}^{[1,2]}, W^{[1,2]}_2=2\ve_1^3+3\ve_1a_1b_1+3\ve_2b_1a_1+\ve_2^3)$ is a Jacobian-finite quiver with potential. The connected components containing all the indecomposable $\tau$-rigid modules $E_1,E_2,P_1,P_2$ in the Auslander-Reiten quiver of $\modu~\wh{\Jac}(\overline{\bA_2}^{[1,2]}, W^{[1,2]}_2)$ is shown below:
\begin{figure}[H]
    \centering
    \includegraphics[width=1\linewidth]{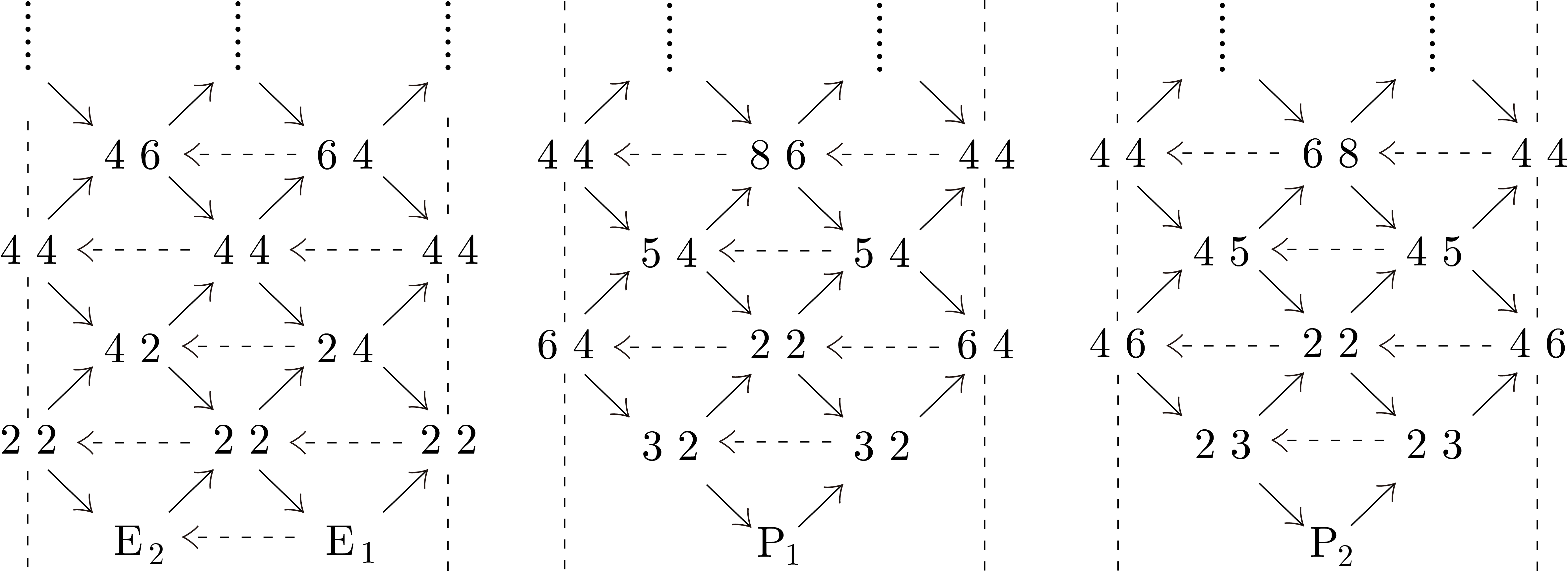}
    \caption{three connected components of Auslander-Reiten quiver of $\modu~\wh{\Jac}(\overline{\bA_2}^{[1,2]},W^{[1,2]}_2)$}
    \label{fig:ARQuiver_A00}
\end{figure}

The indecomposable $\tau$-rigid modules are specifically of the forms:

\begin{equation}\label{eq:TauRigidA11}
\begin{aligned}
E_1=~\begin{matrix}
 1\\
 1
\end{matrix}~,~&
E_2=~\begin{matrix}
 2\\
 2
\end{matrix}~,~&
P_1=~\begin{matrix}
 &  & 1 & \\ 
 & 1 &  & 2\\ 
 1&  & 2 & \\ 
 & 1 &  & 
\end{matrix}~,~&
P_2=~\begin{matrix}
 &  & 2 & \\ 
 & 2 &  & 1\\ 
 2&  & 1 & \\ 
 & 2 &  & 
\end{matrix}.
\end{aligned} 
\end{equation}
The $g$-vector of each indecomposable $\tau$-rigid modules is $g_{E_1}=[1,-1],g_{E_2}=[-1,1],g_{P_1}=[1,0],g_{P_2}=[0,1]$. For each vector $e\in \bZ^2$, it can be determined that $\Gr_e(\star)\leq 1, \star=\{E_1,E_2,P_1,P_2\}$, thus, by counting all submodules of $E_1,E_2,P_1,P_2$, we obtain:
\begin{equation}\label{eq:CCTauRigidA11}
\begin{aligned}
CC(E_1)&=\dfrac{3x_2}{x_1},
CC(E_2)&=\dfrac{3x_1}{x_2},
CC(P_1)&=\dfrac{9}{x_1},
CC(P_2)&=\dfrac{9}{x_2}.
\end{aligned} 
\end{equation}
Thus, we have:

\begin{theorem}\label{thm:Categorify_A11_GCA}
The 2-cyclic CC-map $CC(-): \modu~\wh \Jac(\overline{\bA_2}^{[1,2]}, W_2^{[1,2]}) \rightarrow \bZ[x_1^{\pm},x_2^{\pm}]$ provides a bijection between the indecomposable $\tau$-rigid modules in $\modu~\wh \Jac(\overline{\bA_2}^{[1,2]}, W_2^{[1,2]})$ and the non-initial cluster variables in $\cA(\overline{\bA_2}^{[1,2]})$.
\end{theorem}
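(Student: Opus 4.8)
The plan is to make the Jacobian algebra $A:=\wh{\Jac}(\overline{\bA_2}^{[1,2]},W_2^{[1,2]})$ completely explicit, read off its indecomposable $\tau$-rigid modules, compute the three ingredients of the $2$-cyclic CC-map for each of them, and match the resulting Laurent polynomials against the non-initial cluster variables of $\cA(\overline{\bA_2}^{[1,2]})$. First I would write out the cyclic derivatives of $W_2^{[1,2]}=2\ve_1^3+3\ve_1a_1b_1+3\ve_2b_1a_1+\ve_2^3$: setting $\partial_{\ve_1}W=\partial_{\ve_2}W=\partial_{a_1}W=\partial_{b_1}W=0$ gives the relations $a_1b_1=-2\ve_1^2$, $b_1a_1=-\ve_2^2$, $\ve_1a_1=-a_1\ve_2$, $b_1\ve_1=-\ve_2b_1$ in $A$. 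From these one derives $\ve_1^4=\ve_2^4=0$ and a finite $K$-basis, exhibiting $A$ as the $12$-dimensional algebra whose indecomposable projectives are the $6$-dimensional modules $P_1,P_2$ displayed in the theorem's setup and whose Auslander--Reiten quiver has the three relevant components pictured above. Inspecting the AR quiver I would identify $E_1=\begin{smallmatrix}1\\1\end{smallmatrix}$, $E_2=\begin{smallmatrix}2\\2\end{smallmatrix}$, $P_1$, $P_2$ as indecomposable $\tau$-rigid modules by checking $\Hom_A(M,\tau M)=0$ directly from the displayed AR translates, deferring to the last paragraph the proof that these four exhaust all indecomposable $\tau$-rigid modules.

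Next come the three CC-ingredients. Reading off minimal projective presentations (for $P_i$ the identity presentation, for $E_i$ a presentation $P_j\to P_i\to E_i\to 0$) I would confirm the $g$-vectors $g_{E_1}=[1,-1]$, $g_{E_2}=[-1,1]$, $g_{P_1}=[1,0]$, $g_{P_2}=[0,1]$ asserted before the theorem. For the Grassmannian sums, the key reduction is that for each of these four modules $M$ and every $e\in\bN^2$ the quiver Grassmannian $\Gr_e(M)$ is either empty or a single reduced point: each submodule lattice is a chain-like lattice containing at most one submodule of any given dimension vector, so $\sum_{e}\chi(\Gr_e(M))$ is just the number of submodules of $M$. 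Counting submodules gives $3$ for $E_1$ and for $E_2$ (namely $0$, the socle, and the whole module) and $9$ for $P_1$ and for $P_2$, whence
\[
CC(E_1)=\frac{3x_2}{x_1},\qquad CC(E_2)=\frac{3x_1}{x_2},\qquad CC(P_1)=\frac{9}{x_1},\qquad CC(P_2)=\frac{9}{x_2}.
\]

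Finally I would compare with $\cA(\overline{\bA_2}^{[1,2]})$, whose full list of cluster variables is $x_1,x_2,\tfrac{3x_2}{x_1},\tfrac{3x_1}{x_2},\tfrac{9}{x_1},\tfrac{9}{x_2}$ with $\{x_1,x_2\}$ the initial cluster; thus its non-initial cluster variables are exactly the four Laurent polynomials computed above. Since $E_1,E_2,P_1,P_2$ are pairwise non-isomorphic with pairwise distinct $g$-vectors, and hence pairwise distinct CC-images, the $2$-cyclic CC-map restricts to a bijection from the indecomposable $\tau$-rigid $A$-modules onto the non-initial cluster variables of $\cA(\overline{\bA_2}^{[1,2]})$, which is the claim.

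The step I expect to be the main obstacle is the completeness assertion: that $A$ has no indecomposable $\tau$-rigid module beyond $E_1,E_2,P_1,P_2$. Since $A$ is not representation-finite and its AR quiver has several components, this cannot be settled by a finite case check of all indecomposables. I would resolve it by combining two inputs: (i) the Hom-finite $2$-Calabi--Yau cluster category of $(\overline{\bA_2}^{[1,2]},W_2^{[1,2]})$ together with the Adachi--Iyama--Reiten correspondence, which converts indecomposable $\tau$-rigid $A$-modules into indecomposable rigid objects of the cluster category not belonging to the initial cluster-tilting object; and (ii) the fact that $\cA(\overline{\bA_2}^{[1,2]})$ is of finite type with exactly four non-initial cluster variables, so the cluster category has exactly four such indecomposable rigid objects. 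Matching the two counts forces the list to be complete. Alternatively, and more hands-on, one can argue that every indecomposable $A$-module lying in a tube of the AR quiver admits a nonzero morphism to its AR translate, hence is not $\tau$-rigid, so the $\tau$-rigid modules all lie in the three displayed components, where a direct inspection finishes the argument.
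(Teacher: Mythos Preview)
Your proposal is correct and follows essentially the same computational route as the paper: list the four indecomposable $\tau$-rigid modules $E_1,E_2,P_1,P_2$, read off their $g$-vectors, observe that every quiver Grassmannian $\Gr_e(M)$ is either empty or a single point so that $\sum_e\chi(\Gr_e(M))$ is the submodule count, and match the resulting values $\tfrac{3x_2}{x_1},\tfrac{3x_1}{x_2},\tfrac{9}{x_1},\tfrac{9}{x_2}$ against the four non-initial cluster variables of $\cA(\overline{\bA_2}^{[1,2]})$. The paper simply asserts, by inspection of the displayed Auslander--Reiten components, that these four are all the indecomposable $\tau$-rigid modules; your final paragraph goes further and supplies two independent justifications (the AIR correspondence with the finite-type cluster count, and the tube argument) for this completeness step, which the paper leaves implicit.
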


\subsubsection{Categorification of Paquette Schiffler's Generalized Cluster Algebras of $\overline{\bA_3}^{\emptyset}$}

In this section, we consider the generalized cluster algebras $\cA(\overline{\bA_3}^{\emptyset})$ of $\overline{\bA_3}^{\emptyset}$, which corresponds to the Paquette Schiffler's Generalized Cluster Algebras of $(S^2,\{ p_1,p_2,p_3 \},v)$, where $m(p_1)=m(p_3)=3,m(p_2)=1$. This is also a subalgebra of $\bZ[x_1^\pm,x_2^\pm,x_3^\pm]$, the generators of $\cA(\overline{\bA_3}^{\emptyset})$ are:
\[
x_1,x_2,x_3,\dfrac{2x_2}{x_1},\dfrac{2x_1}{x_2},\frac{2x_1x_3}{x_2},\frac{6x_1}{x_3},\frac{6x_1}{x_2},\frac{6x_3}{x_2},\frac{6x_3}{x_1},\frac{8x_2}{x_1x_3},\\ 
\dfrac{12}{x_1},\frac{12}{x_3},\frac{36}{x_2}
\]

\begin{remark}
There exists a triangulation on $(S^2,\{ p_1,p_2,p_3 \},v)$ whose associated quiver is $\overline{\bA_3}^{\emptyset}$
\begin{figure}[H]
    \centering
    \includegraphics[width=0.35\linewidth]{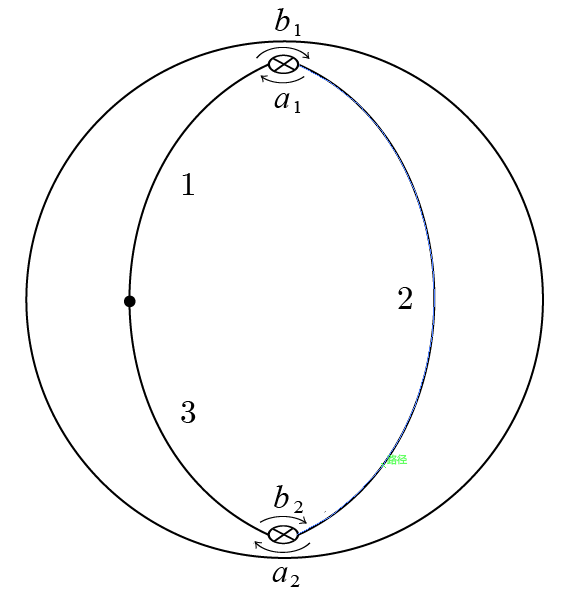}
    \caption{$(S^2,\{ p_1,p_2,p_3 \},v)$}
    \label{fig:ARQuiver_A111}
\end{figure}

\end{remark}

According to the theorem \ref{cor:JacFin}, we have known that $(\overline{\bA_3}^{\emptyset}, W^{[\emptyset]}_3)=(a_1b_1)^3+(a_2b_2)^3+3a_1a_2b_2b_1$ is a Jacobian-finite quiver with potential. Besides the simple modules, 
the non-projective indecomposable $\tau$-rigid modules are of the forms:

\begin{equation}\label{eq:Non_Proj_Tau_RigidA111}
\begin{aligned}
M_{[0,3,2]}=~\begin{matrix}
2\\
3\\
2\\
3\\
2
\end{matrix}~,~
M_{[2,3,0]}=~\begin{matrix}
2\\
1\\
2\\
1\\
2
\end{matrix}~,~M_{[3,2,0]}=~\begin{matrix}
1\\
2\\
1\\
2\\
1
\end{matrix}~,M_{[0,2,3]}=~\begin{matrix}
3\\
2\\
3\\
2\\
3
\end{matrix}~,~M_{[2,1,2]}=~\begin{matrix}
 1&  & \\ 
 & 2 & \\ 
 1&  & 3\\ 
 &  & 3 
\end{matrix}~
\end{aligned} 
\end{equation}
the projective modules $P_1,P_3$ are of the form:
\begin{equation}\label{eq:Proj_Mod_1,3}
P_1=~\begin{matrix}
 & 1 & \\ 
 & 2 & \\ 
 &  & 1\\ 
3 &  & 2\\ 
 &  & 1\\ 
 & 2 & \\ 
 & 1 & 
\end{matrix}~,~P_3=~\begin{matrix}
 & 3 & \\ 
 & 2 & \\ 
 &  & 3\\ 
1 &  & 2\\ 
 &  & 3\\ 
 & 2 & \\ 
 & 3 & 
\end{matrix}~
\end{equation}
And projective module $P_2$ is of the form:
\begin{figure}[H]
\centering
\begin{tikzpicture}[scale=1]

\node (2_1) at (0,0) {2};
\node (1_1) at (-1,-0.5) {1};
\node (2_2) at (-0.5,-1) {2};
\node (1_2) at (-0.5,-1.5) {1};
\node (p_2) at (-2.5,-1.5) {$P_2=$};
\node (2_4) at (-1,-2) {2};
\node (1_3) at (-0.5,-2.5) {1};
\node (3_1) at (1,-0.5) {3};
\node (2_3) at (0.5,-1) {2};
\node (3_3) at (0.5,-1.5) {3};
\node (2_6) at (1,-2) {2};
\node (3_2) at (0.5,-2.5) {3};
\node (2_5) at (0,-3) {2};

\draw[-] (2_2) -- (3_2);
\draw[-] (2_3) -- (1_3);

\end{tikzpicture}
\end{figure}
The $g$-vector of each indecomposable $\tau$-rigid modules is $g_{S_1}=[1,-1,0],g_{S_2}=[-1,1,-1],g_{S_3}=[0,-1,1],g_{M_{[0,3,2]}}=[-1,1,0],g_{M_{[2,3,0]}}=[0,1,-1],g_{M_{[3,2,0]}}=[1,0,-1],M_{[0,2,3]}=[-1,0,1],M_{[2,1,2]}=[1,-1,1]$. For any vector $e\in \bZ^3$, Thus, we obtain:
\begin{equation}\label{eq:CCTauRigidA111}
\begin{aligned}
&CC(S_1)=\dfrac{2x_2}{x_1}, && CC(S_2)=\dfrac{2x_1x_3}{x_2}, && CC(S_3)=\dfrac{2x_2}{x_3},\\
&CC(M_{[0,3,2]})=\dfrac{6x_1}{x_2}, && CC(M_{[2,3,0]})=\dfrac{6x_3}{x_2}, && CC(M_{[3,2,0]})=\dfrac{6x_3}{x_1},\\
&CC(M_{[0,2,3]})=\dfrac{6x_2}{x_3}, && CC(M_{[2,1,2]})=\dfrac{8x_2}{x_1x_3},\\
&CC(P_1)=\dfrac{12}{x_1}, && CC(P_2)=\dfrac{12}{x_2}, && CC(P_3)=\dfrac{36}{x_3}.
\end{aligned} 
\end{equation}
Thus, we have:

\begin{theorem}\label{thm:Categorify_A11_GCA}
The 2-cyclic CC-map $CC(-): \modu~\wh \Jac(\overline{\bA_3}^{\emptyset}, W_3^{\emptyset}) \rightarrow \bZ[x_1^{\pm},x_2^{\pm},x_3^{\pm}]$ provides a bijection between the indecomposable $\tau$-rigid modules in $\modu~\wh \Jac(\overline{\bA_3}^{\emptyset}, W_3^{\emptyset})$ and the non-initial cluster variables in $\cA(\overline{\bA_3}^{\emptyset})$.
\end{theorem}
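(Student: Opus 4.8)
The plan is to follow the template already used for $\overline{\bA_2}^{\emptyset}$ and $\overline{\bA_2}^{[1,2]}$: first describe the module category of the Jacobian algebra $A:=\wh{\Jac}(\overline{\bA_3}^{\emptyset},W_3^{\emptyset})$ with $W_3^{\emptyset}=(a_1b_1)^3+(a_2b_2)^3+3a_1a_2b_2b_1$, then isolate the indecomposable $\tau$-rigid modules, compute their $2$-cyclic Caldero--Chapoton invariants, and finally match the resulting Laurent polynomials against the explicit list of non-initial cluster variables of $\cA(\overline{\bA_3}^{\emptyset})$.

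First I would record that, by Corollary~\ref{cor:JacFin}, the algebra $A$ is finite-dimensional. Writing out the relations $\partial_\alpha W_3^{\emptyset}=0$ for $\alpha\in Q_1$, one checks that $A$ is representation-finite, computes its Auslander--Reiten quiver explicitly, and thereby obtains a complete list of indecomposable $A$-modules together with their Loewy series. Running the criterion $\Hom_A(M,\tau M)=0$ of Definition~\ref{def:tau_tlit} over this list singles out exactly the eleven modules $S_1,S_2,S_3$, $M_{[0,3,2]},M_{[2,3,0]},M_{[3,2,0]},M_{[0,2,3]},M_{[2,1,2]}$ of \eqref{eq:Non_Proj_Tau_RigidA111} together with the projectives $P_1,P_2,P_3$ displayed after it. Since, by \cite{AIR14}, an indecomposable $\tau$-rigid module is determined by its $g$-vector, and the eleven $g$-vectors listed in this subsection are pairwise distinct, $CC(-)$ will be automatically injective on these modules once it is known to be well defined.

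Next I would evaluate $CC(M)$ for each of the eleven modules. For every such $M$ the minimal projective presentation yields $g_M$ (the listed values), and for every dimension vector $e\in\bN^{3}$ one identifies the quiver Grassmannian $\Gr_e(M)$: each of these modules is either uniserial or obtained by gluing two uniserial strands along a common socle or top, so every nonempty $\Gr_e(M)$ occurring here is a point or a projective space, and hence $\chi(\Gr_e(M))$ is the (weighted) number of subrepresentations of $M$ of dimension vector $e$. Substituting into \eqref{eq:2cyclicCCmap} and collecting terms produces the eleven Laurent polynomials $CC(S_1)=\tfrac{2x_2}{x_1},\ \dots,\ CC(P_3)=\tfrac{36}{x_3}$ recorded in \eqref{eq:CCTauRigidA111}; the bookkeeping for $P_2$, whose Loewy structure is the ``diamond'' pictured above, is the most delicate instance of this computation.

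On the cluster side I would use Paquette--Schiffler's arc model of $\cA(\overline{\bA_3}^{\emptyset})$ attached to the orbifold $(S^2,\{p_1,p_2,p_3\},v)$ with $v(p_1)=v(p_3)=3$ and $v(p_2)=1$: this generalized cluster algebra is of finite type, its cluster variables correspond to arcs of the orbifold, and computing their Laurent expansions (via the generalized snake/band-graph expansion of \cite{PS18}) reproduces exactly the eleven non-initial cluster variables listed just before the theorem. Comparing this list with the values $CC(M)$ from the previous step then shows that $CC(-)$ carries the indecomposable $\tau$-rigid $A$-modules bijectively onto the non-initial cluster variables of $\cA(\overline{\bA_3}^{\emptyset})$: injectivity follows from the distinctness of the $g$-vectors, and surjectivity is the observation that every one of the eleven non-initial cluster variables is attained. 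The main obstacle I anticipate is the very first step --- producing the complete and correct list of indecomposable $\tau$-rigid $A$-modules together with their submodule lattices --- since $A$ is a genuinely non-hereditary finite-dimensional algebra; once the module category is under control, the Grassmannian computations and the comparison with the Paquette--Schiffler arc combinatorics are lengthy but routine.
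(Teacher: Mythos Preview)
Your proposal is correct and follows essentially the same approach as the paper: both identify the eleven indecomposable $\tau$-rigid $A$-modules together with their $g$-vectors, compute $CC(M)$ for each via submodule counts, and match the resulting Laurent polynomials against the explicit list of non-initial cluster variables of $\cA(\overline{\bA_3}^{\emptyset})$. The only difference is that the paper presents the list of $\tau$-rigid modules and the values in \eqref{eq:CCTauRigidA111} directly, without spelling out the Auslander--Reiten quiver computation or the injectivity-via-$g$-vectors argument that you outline.
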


\end{sloppypar}
\end{document}